\newcommand{\epsi}{\varepsilon}
\newcommand{\Q}{\mathbb{Q}}
\newcommand{\Z}{\mathbb{Z}}
\newcommand{\R}{\mathbb{R}}
\newcommand{\Real}{\mathbb{R}}
\newcommand{\Ocal}{\mathcal{O}}
\newcommand{\Lcal}{\mathcal{L}}
\newcommand{\Bcal}{\mathcal{B}}
\newcommand{\Hcal}{\mathcal{H}}
\newcommand{\Lcaldis}{\Lcal_{\text{dis}}}
\newcommand{\Hcaldis}{\Hcal_{\text{dis}}}
\newcommand{\HcalMP}{\Hcal_{\text{mp}}}
\newcommand{\Acal}{\mathcal{A}}
\newcommand{\tp}{\top} 
\newcommand*{\itD}{\varDelta} 
\newcommand*{\dxi}{{\itD\xi}} 
\newcommand{\Htotal}{H_\infty}
\newcommand{\lbf}{\pmb{\ell}}
\newcommand{\linfty}{ { \lbf^{\infty} } }
\newcommand{\ltwo}{ { \lbf^{2} } }
\newcommand{\lsp}[1]{{\lbf^{#1}}}
\newcommand{\lone}{ { \lbf^{1} } }
\newcommand{\lstar}{ { \lbf^{*} } }
\newcommand{\hone}{ \mathbf{h}^{1} }
\newcommand{\Ed}{\mathbf{E}_{\dxi}}
\newcommand{\Ea}{\Ed^\textnormal{aux}}
\newcommand{\Ek}{\Ed^\textnormal{ker}}
\newcommand{\Vd}{\mathbf{V}_{\!\!\dxi}}
\newcommand{\E}{\mathbf{E}}
\newcommand{\V}{\mathbf{V}}
\newcommand{\Ltwo}{\mathbf{L}^{2}}
\newcommand{\Lone}{\mathbf{L}^{1}}
\newcommand{\Linfty}{\mathbf{L}^{\infty}}
\newcommand{\Hone}{\mathbf{H}^1}
\newcommand{\Gcal}{\mathcal{G}}
\newcommand{\Ucal}{\mathcal{U}}
\newcommand{\Kcal}{\mathcal{K}}
\DeclareMathOperator*{\id}{Id}
\newcommand{\Rcal}{\mathcal{R}}
\newcommand{\Qcal}{\mathcal{Q}}
\newcommand{\sumZ}[2]{\sum_{#1\in\Z}#2}
\newcommand{\sumZxi}[2]{\dxi \sum_{#1\in\Z}#2}
\newcommand{\intR}[2]{\int_{\R}#2\,d#1}
\newcommand{\ip}[2]{\left<#1,#2\right>}
\newcommand{\ipl}[2]{\left<#1,#2\right>_{\ltwo}}
\newcommand{\norm}[1]{\left\Vert#1\right\Vert}
\newcommand{\norms}[1]{\left\|#1\right\|}
\newcommand{\abs}[1]{\left|#1\right|}
\newcommand{\normlstar}[1]{\left\|#1\right\|_{\lstar}}
\newcommand{\normlinfty}[1]{\left\|#1\right\|_{\linfty}}
\newcommand{\normslinfty}[1]{\|#1\|_{\linfty}}
\newcommand{\normltwo}[1]{\left\|#1\right\|_{\ltwo}}
\newcommand{\normsltwo}[1]{\|#1\|_{\ltwo}}
\newcommand{\normlone}[1]{\left\|#1\right\|_{\lone}}
\newcommand{\normslone}[1]{\|#1\|_{\lone}}
\newcommand{\normhone}[1]{\left\|#1\right\|_{\hone}}
\newcommand{\norml}[2]{\left\|#2\right\|_{\lsp{#1}}}
\newcommand{\normE}[1]{\left\|#1\right\|_{\E}}
\newcommand{\normV}[1]{\left\|#1\right\|_{\V}}
\newcommand{\normEd}[1]{\left\|#1\right\|_{\Ed}}
\newcommand{\normEa}[1]{\left\|#1\right\|_{\Ea}}
\newcommand{\normEk}[1]{\left\|#1\right\|_{\Ek}}
\newcommand{\normVd}[1]{\left\|#1\right\|_{\Vd}}
\newcommand{\normLone}[1]{\left\|#1\right\|_{\Lone}}
\newcommand{\normLtwo}[1]{\left\|#1\right\|_{\Ltwo}}
\newcommand{\normLinfty}[1]{\left\|#1\right\|_{\Linfty}}
\newcommand{\normHone}[1]{\left\|#1\right\|_{\Hone}}
\newcommand{\diff}[2]{\frac{d #2}{d #1}}
\newcommand{\pdiff}[2]{\frac{\partial #2}{\partial#1}}
\newcommand{\vdiff}[2]{\frac{\delta #2}{\delta#1}}
\newcommand{\fracpar}[2]{\frac{\partial #1}{\partial #2}}
\newcommand{\fracpart}{\fracpar{}{t}}
\newcommand*{\A}[1]{\mathrm{A}[{#1}]}
\newcommand*{\B}[1]{\mathrm{B}[{#1}]}
\newcommand*{\D}{\mathrm{D}}
\DeclareMathOperator{\spr}{spr} 
\DeclareMathOperator{\sgn}{sgn}
\newcommand{\Ekin}{\ensuremath{E^\text{kin}}}
\newcommand{\Epot}{\ensuremath{E^\text{pot}}}
\newcommand{\Ekindis}{\ensuremath{E^\text{kin}_{\dxi}}}
\newcommand{\Epotdis}{\ensuremath{E^\text{pot}_{\dxi}}}
\newtheorem{thm}{Theorem}[section]
\newtheorem{cor}[thm]{Corollary}
\newtheorem{lem}[thm]{Lemma}
\theoremstyle{remark}
\newtheorem{rem}[thm]{Remark}
\theoremstyle{definition}
\newtheorem{dfn}[thm]{Definition}
\newtheorem{prop}[thm]{Proposition}
\newcommand{\ds}{\displaystyle}
\numberwithin{equation}{section} 
\newcommand{\arxiv}[1]{\href{http://arxiv.org/abs/#1}{arXiv:#1}} 
\begin{document}

\title[Variational discretization of a 2CH system]{A semi-discrete scheme derived from variational principles for global
  conservative solutions of a Camassa--Holm system}

\author[S.\ T.\ Galtung]{Sondre Tesdal Galtung}
\address[S.\ T.\ Galtung]{Department of Mathematical Sciences, NTNU -- Norwegian University of Science and Technology, 7491 Trondheim, Norway}
\email{sondre.galtung@ntnu.no}

\author[X.\ Raynaud]{Xavier Raynaud} 
\address[X.\ Raynaud]{Department of Mathematical Sciences, NTNU -- Norwegian University of Science and Technology, 7491 Trondheim and SINTEF applied mathematics and cybernetics, Oslo, Norway}
\email{xavier.raynaud@ntnu.no}

\keywords{Camassa--Holm equation, two-component Camassa--Holm system, calculus of variations, Lagrangian coordinates, energy preserving discretizations, discrete Green's functions, discrete Sturm--Liouville operators}

\begin{abstract}
  We define a kinetic and a potential energy such that the principle of stationary action from Lagrangian mechanics yields
  a Camassa--Holm system (2CH) as the governing equations. After discretizing these energies, we use the same variational
  principle to derive a semi-discrete system of equations as an approximation of the 2CH system. The discretizaton is only
  available in Lagrangian coordinates and requires the inversion of a discrete Sturm--Liouville operator with time-varying
  coefficients. We show the existence of fundamental solutions for this operator at initial time with appropriate decay.
  By propagating the fundamental solutions in time, we define an equivalent semi-discrete system
  for which we prove that there exists unique global solutions. Finally, we show how the solutions of the semi-discrete system can
  be used to construct a sequence of functions converging to the conservative solution of the 2CH system.
\end{abstract}

\maketitle

\section{Introduction} \label{sec:intro}
The Camassa--Holm (CH) equation
\begin{equation}
  u_{t}-u_{txx} + 3uu_{x} -2u_{x}u_{xx} - uu_{xxx} = 0,
  \label{eq:CH}
\end{equation}
is first known to have appeared in \cite{Fokas1981}, although written in an alternative form, as a special case in a hierarchy of completely
integrable partial differential equations. The equation gained prominence after it was derived in \cite{Camassa1993} as a limiting
case in the shallow water regime of the Green--Naghdi equations from hydrodynamics, see also
\cite{constantin2009hydrodynamical}. Since then, the CH equation has been widely studied due to its rich mathematical structure: It
is for instance bi-Hamiltonian, admits a Lax pair and is completely integrable. The solutions may develop singularities in finite
time even for smooth initial data, see, e.g., \cite{constantin1998wave,constantin2000blow}.

The so-called Camassa--Holm system (2CH)
\begin{subequations}
  \label{eq:2CH}
  \begin{align}
    \label{eq:2CHa}
    u_{t} - u_{txx} + 3uu_{x} - 2u_{x}u_{xx} -uu_{xxx} + \rho\rho_{x} &= 0\\
    \label{eq:2CHb}
    \rho_{t} + (\rho u)_{x} &= 0
  \end{align}
\end{subequations}
was first introduced in \cite{Olver1996}.

This is not the only two-component generalization which has been proposed for the CH equation.
For instance, in \cite{chen2006two,falqui2005camassa} the authors showed how similar systems are related to the AKNS hierarchy.
However, we will here only consider \eqref{eq:2CH}, which similarly to \eqref{eq:CH} can be derived as a model
for water waves. Indeed, the system was derived in \cite{escher2016two} from the Euler
equations in the case of constant vorticity, while different derivation based on the Green--Naghdi equations can
be found in \cite{ConsIvan2008}. The 2CH system shares many properties with the CH equation: The equation is bi-hamiltonian
\cite{Olver1996}, admits a Lax pair and is integrable \cite{ConsIvan2008}.  Results on the well-posedness, blow-up and global existence of solutions to
\eqref{eq:2CH} are provided in \cite{escher2007well, gui2010global, guan2015well,escher2011geometry}.

Both the CH equation and the 2CH system are geodesic equations, see \cite{cons:03,cons:01b,constantin2002geometric, escher2011geometry}. The CH
equation is a geodesic on the group of diffeomorphisms for the right-invariant norm
\begin{equation}
  \label{eq:defenergyH1}
  \Ekin = \frac12\normHone{u}^2 = \frac12\int_\Real(u^2 + u_x^2)\,dx.
\end{equation}
To clarify this statement, we introduce the notation $y:\Real^+\times\Real \to \Real$ for a path in the group of diffeomorphisms,
meaning that $y(t, \xi)$ denotes the path of a particle initially at $\xi$, and the Eulerian velocity is given by $u(t, x) =
y_t(t, y^{-1}(x))$. The geodesic equation is then obtained as an extremal solution for the action functional
\begin{equation*}
  \Acal(y) = \int_{t_0}^{t_1} \Ekin(t)\,dt = \frac12\int_{t_0}^{t_1}\int_\Real\left(y_t^2y_\xi  + \frac{y_{t\xi}^2}{y_\xi}\right)\,d\xi dt.
\end{equation*}
The momentum map, as defined in \cite{arnold1999topological}, is given by the Helmholtz transform $m(u) = u - u_{xx}$ in Eulerian
coordinates. Then we may write the energy as $\Ekin = \frac12\int_\Real m(u)u\,dx$. For the 2CH system in
\cite{escher2011geometry}, the diffeomorphism group is replaced with a semi-direct product which accounts for the variable
$\rho$. Then the 2CH system is a geodesic for the right-invariant norm $ \frac12\normHone{u}^2 +
\frac12\normLtwo{\rho}^2$. However, we will not follow this approach here, but rather use the fact that
  \eqref{eq:2CH} can be derived as the governing equation for a different variational problem, where the action functional
  includes a potential energy term and the variation is performed on the group of diffeomorphisms only. This point of view enables
  us to derive a discretization which mimics the variational derivation of the continuous case.  In this approach, we consider the
  variable $\rho$ as a density entering the action functional through a potential term
\begin{equation}
  \label{eq:defEpot}
  \Epot = \frac12\int_\Real(\rho - \rho_\infty)^2\,dx,
\end{equation}
where $\rho_{\infty} \ge 0$ is the asymptotic value of $\rho$. The mass conservation equation $\rho_t + (\rho u)_x = 0$ is not a
result of the variational derivation, but is instead a given constraint of the problem.  Equation \eqref{eq:defEpot} can be
interpreted as an elastic energy: It increases whenever the system deviates from the rest configuration given by
$\rho \equiv \rho_\infty$.  In the beginning of Section \ref{sec:disc}, we present the derivation of the 2CH as the critical point
for the variation of $\Ekin - \Epot$. This approach follows the classical framework, see \cite{arnold89}, and the potential term
$\Epot$ depending on the density is added in the same way as in \cite{lin2005hydrodynamics}, see also
\cite{hyon2010energetic,giga2017variational,xu2014energetic} for applications to more complex fluids. In Lagrangian variables, the
mass conservation equation simplifies to the expression
\begin{equation}
  \label{eq:masscons}
  \pdiff{t}{}(\rho(t,y) y_\xi) = 0.
\end{equation}
To derive a discrete approximation of the 2CH system, we propose to follow the same steps of the variational derivation in the
continuous case. First, we discretize the path functions $y(t,\xi)$ by piecewise linear functions, $y_i(t) = y(t, \xi_i)$ for
$\xi_i = i\dxi$, $i\in\Z$ and $\dxi > 0$. Then, we approximate the Lagrangian using these discretized variables. Finally, we
obtain the governing equation for the discretized system from the principle of stationary action, as in the continuous case. In
our opinion, the advantage of using this variational approach as basis for our discretization is that we need only take variations
with respect to a single discrete variable, rather than two. This reduction is achieved by the use of the identity
\eqref{eq:masscons}. Note that the group structure of the diffeomorphisms is not carried over to the discrete setting, as the
composition rule is not defined at the discrete level. In practice, this means that that our discretized equation will not have a
purely Eulerian form and should be solved in Lagrangian variables. We retain two symmetries though, the time and space translation
invariance. As a result, we have conservation of discrete counterparts of the integrals $\int_\Real u^2+u_x^2\,dx$ and
$\int_\Real u\,dx$, see Section \ref{sec:disc}.

We rewrite the 2CH system \eqref{eq:2CH} in Lagrangian variables following \cite{Grunert2012}. We first apply the inverse of the
Helmholtz operator $\id - \partial_{xx}$ to obtain
\begin{equation}
  \label{eq:cheul}
  u_t + uu_x = -P_x,\quad
  P - P_{xx} = u^2 + \frac12 u_x^2 + \frac12\rho^2.
\end{equation}
We rewrite the second equation above as a system of first-order equations,
\begin{equation}
  \label{eq:helmeul}
  \begin{bmatrix}
    -\partial_x & 1 \\
    1 & -\partial_x
  \end{bmatrix}
  \circ
  \begin{bmatrix}
    P\\Q
  \end{bmatrix}
  =
  \begin{bmatrix}
    0\\f
  \end{bmatrix},
\end{equation}
for $Q = P_x$ and $f = u^2 + \frac12 u_x^2 + \frac12\rho^2$. In Lagrangian variables we have $\bar P(\xi) = P(y(\xi))$, and the
system \eqref{eq:helmeul} becomes
\begin{equation}
  \label{eq:helmoLag}
  \begin{bmatrix}
    -\partial_\xi & y_\xi\\
    y_\xi & -\partial_\xi
  \end{bmatrix}
  \circ
  \begin{bmatrix}
    \bar P \\ \bar Q
  \end{bmatrix}
  =
  \begin{bmatrix}
    0\\\bar f
  \end{bmatrix},
\end{equation}
for $\bar f = f\circ y$. In \eqref{eq:helmoLag}, the operator denoted by $y_\xi$ corresponds to pointwise multiplication by
$y_\xi$. The matrix operator corresponds to the momentum map in Lagrangian coordinates and must be inverted to solve the
system. In contrast to its Eulerian counterpart in \eqref{eq:helmeul}, the operator evolves in time. This significantly complicates the analysis,
especially in the discrete case. In Section \ref{sec:analysis}, we introduce the operators $\Gcal$ and $\Kcal$ which define the fundamental
solutions of the momentum operator,
\begin{equation} \label{eq:defGKop}
  \begin{bmatrix}
    -\partial_\xi & y_\xi\\
    y_\xi & -\partial_\xi
  \end{bmatrix}
  \circ
  \begin{bmatrix}
    \Kcal & \Gcal\\
    \Gcal & \Kcal
  \end{bmatrix}
  =
  \begin{bmatrix}
    \delta & 0\\
    0 & \delta
  \end{bmatrix}.
\end{equation}
Note that the operator becomes singular when $y_\xi$ vanishes. In the discrete case, the momentum operator and its fundamental
solution are given by
\begin{equation}
  \label{eq:disdefgkgk}
  \begin{bmatrix}
    -\D_- & \D_+y\\
    \D_+y & -\D_+
  \end{bmatrix}
  \circ
  \begin{bmatrix}
    \gamma & k\\
    g & \kappa
  \end{bmatrix}
  =
  \begin{bmatrix}
    \delta & 0\\
    0 & \delta
  \end{bmatrix},
\end{equation}
where $\D_\pm$ denotes forward and backward difference operators, see Section \ref{sec:disc}.  This is a form of \emph{Jacobi
  difference equation}, cf.\ \cite{JacobiOperator}.  To establish solutions of \eqref{eq:disdefgkgk}, we shall invoke results from
\cite{Friedland2006, Pituk2002} which generalize the Poincar\'{e}--Perron theory on difference equations.
Section \ref{sec:aux} is completely devoted to this analysis.

The CH equation and 2CH system can blow up in finite time, even for smooth initial data. The blow-up scenario for CH has been
described in \cite{constantin1998global,constantin1998wave,constantin2000global} and consists of a singularity where $\lim_{t\to
  t_c}u_x(t, x_c) = -\infty$ for some critical time $t_c$ and location $x_c$. However, since the $\Hone$-norm of the solution is
preserved, the solution remains continuous. In fact, the solution can be prolongated in two consistent ways: Conservative
solutions will recover the total energy after the singularity, while dissipative solutions remove the energy that has been trapped
in the singularity, see
\cite{bressan2007global,holden2007global,Grunert2012,grunholray12,bressan2007dissglobal,holden2009dissipative,grunert2014global}.
If $\rho>0$ initially, no blow-up occurs and the 2CH system preserves the regularity of the initial data, see
\cite{Grunert2012}. We can interpret this property as a regularization effect of the elastic energy: The particles cannot
accumulate at a given location because of a repulsive elastic force. The peakon-antipeakon collision is a good illustration of the dynamics of the blow-up. We present this scenario in Figures \ref{fig:characteristics}
and \ref{fig:mpeakon}. In the peakon-antipeakon solution, which corresponds to $\rho_0 \equiv 0$, we observe the breakdown of the
solution and the concentration of the energy distribution into a singular measure. At collision time, $u^2 + u_x^2 = 0$ and the
energy reduces to a pure singular Dirac measure, which naturally cannot be plotted. For the same $u_0$, but $\rho_0 \equiv 1$, the potential energy
prevents the peaks from colliding, which is clear from the plot of the characteristics in Figure \ref{fig:characteristics}. The
potential energy grows as the characteristics converge and results in an apparent force which diverts them.

The global conservative solutions of the 2CH system are based on the following conservation law for the
energy,
\begin{equation}
  \label{eq:eulconsenerg}
  (\tfrac12(u^2 + u_x^2 + (\rho - \rho_\infty)^2))_t + (u\tfrac12(u^2 + u_x^2 + (\rho - \rho_\infty)^2))_x = -(uR)_x,
\end{equation}
where $R = P - \frac12u^2 - \frac12 \rho_\infty^2$ for $P$ in \eqref{eq:cheul}. This equation enables us to compute the evolution of the cumulative energy defined from the energy
distribution as
\begin{equation*}
  H(t, \xi) = \frac12 \int_{-\infty}^{y(t,\xi)}(u^2 + u_x^2 + (\rho - \rho_\infty)^2)\,dx
\end{equation*}
in Lagrangian coordinates, for which we obtain $\frac{dH}{dt} = -(uR) \circ y$. This evolution equation is essential to keep track of the energy
when the solution breaks down. To handle the blow-up of the solution, we need also to have a framework which allows the flow map
$\xi \mapsto y(t,\xi)$ to become singular, that is where $y_\xi$ can vanish and the momentum operator in Lagrangian coordinates become
ill-posed. In \cite{Grunert2012}, explicit expressions for $P$ and $Q$ are given. Here, we adopt a different approach where we
propagate the fundamental solutions $\Kcal$ and $\Gcal$ from \eqref{eq:defGKop} in time. Introducing $U = u \circ y$,
the equivalent system for \eqref{eq:2CH} is given by
\begin{subequations}
  \label{eq:contchgoveq}
\begin{equation} 
  \label{eq:contchgoveq1}
  y_{t} = U,\quad U_{t} = -Q,\quad  H_{t} = -UR,
\end{equation}
with the evolution equations for $\Kcal$ and $\Gcal$ given by
\begin{equation}    
  \fracpart\Gcal = [\Ucal, \Kcal],\quad \fracpart\Kcal = [\Ucal, \Gcal].
\end{equation}
Here $[\Ucal, \Kcal]$ denotes the commutator of $U$ and $\Kcal$, see Section \ref{sec:analysis}. In the case where $\rho_{\infty} = 0$,
$R$ and $Q$ in \eqref{eq:contchgoveq1} are given by
\begin{equation}
  \begin{bmatrix}
    R\\Q
  \end{bmatrix}
  =
  \begin{bmatrix}
    \Kcal & \Gcal\\
    \Gcal & \Kcal
  \end{bmatrix}
  \circ
  \begin{bmatrix}
    \tfrac12  U^2\\
    H
  \end{bmatrix}_\xi.
\end{equation}
\end{subequations}
The derivation of \eqref{eq:contchgoveq} can be carried over to the discrete system, and this is
done in Section \ref{sec:analysis}.

The short-time existence for the solution of the semi-discrete system relies on Lipschitz estimates. At this stage, one of
the main ingredients in the proofs is the Young-type estimate for discrete operators presented in Proposition
\ref{prp:young}. For the global existence, we have to adapt the argument of the continuous case and complement it with
\textit{a priori} estimates of the fundamental solutions $(g,k,\gamma,\kappa)$. These estimates follow from monotonicity
properties of these operators, see Lemma \ref{lem:fundsolprops}. Section \ref{sec:existence} is devoted to establishing
existence and uniqueness for global solutions of the discrete 2CH system. In Section \ref{sec:convergence}, we explain how the
solution of the semi-discrete system can be used to construct a sequence of functions that converge to the solution of the
2CH system \eqref{eq:2CH}. Finally, in Section \ref{sec:init_data} we present how to construct appropriate initial data for the semi-discrete system in order
to achieve the convergence in Section \ref{sec:convergence}.

\begin{figure}
  \centering
  \begin{subfigure}{0.495\textwidth}
    \includegraphics[width=\textwidth]{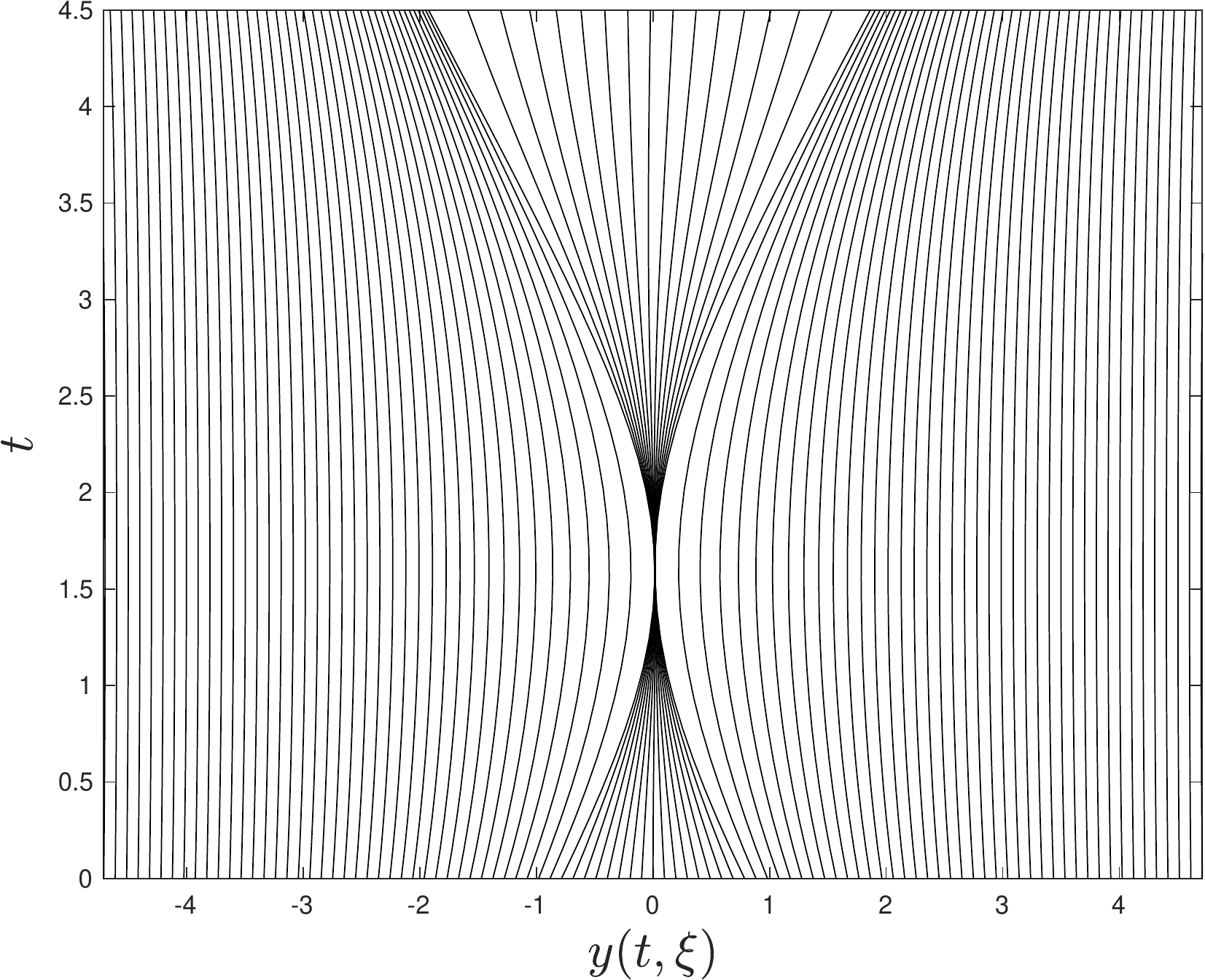}
    \subcaption{$\rho_0 \equiv 0$}
  \end{subfigure}
  \begin{subfigure}{0.495\textwidth}
    \includegraphics[width=\textwidth]{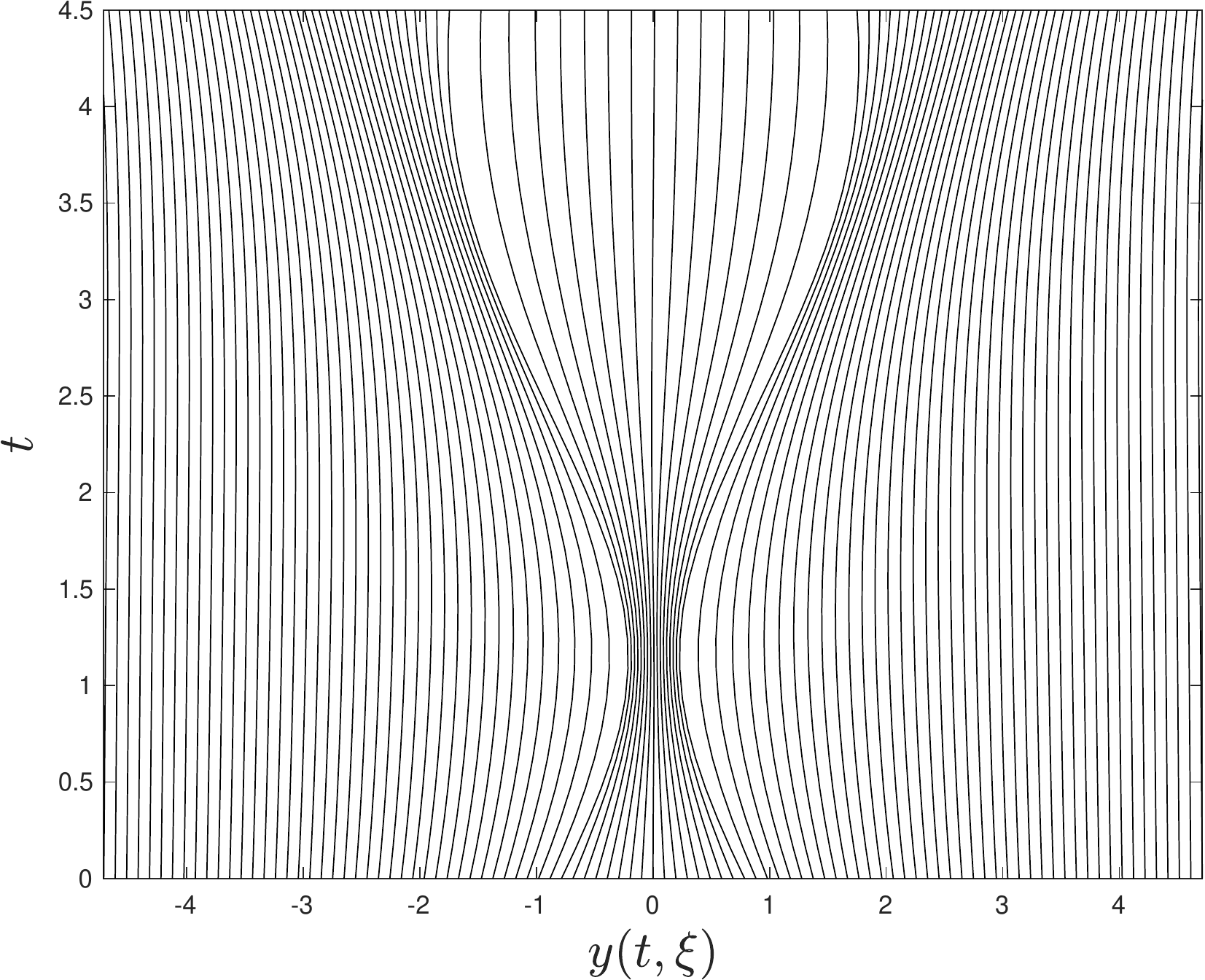}
    \subcaption{$\rho_0 \equiv 1$}
  \end{subfigure}
  \caption{Plot of the characteristics for peakon-antipeakon initial data $u_0$ with $\rho_0$ equal to 0 and 1. We observe the
    regularizing effect of $\rho_0>0$ which prevents the characteristics from colliding.}
  \label{fig:characteristics}
\end{figure}

\begin{figure}
  \centering
  \begin{subfigure}{0.495\textwidth}
    \includegraphics[width=\textwidth]{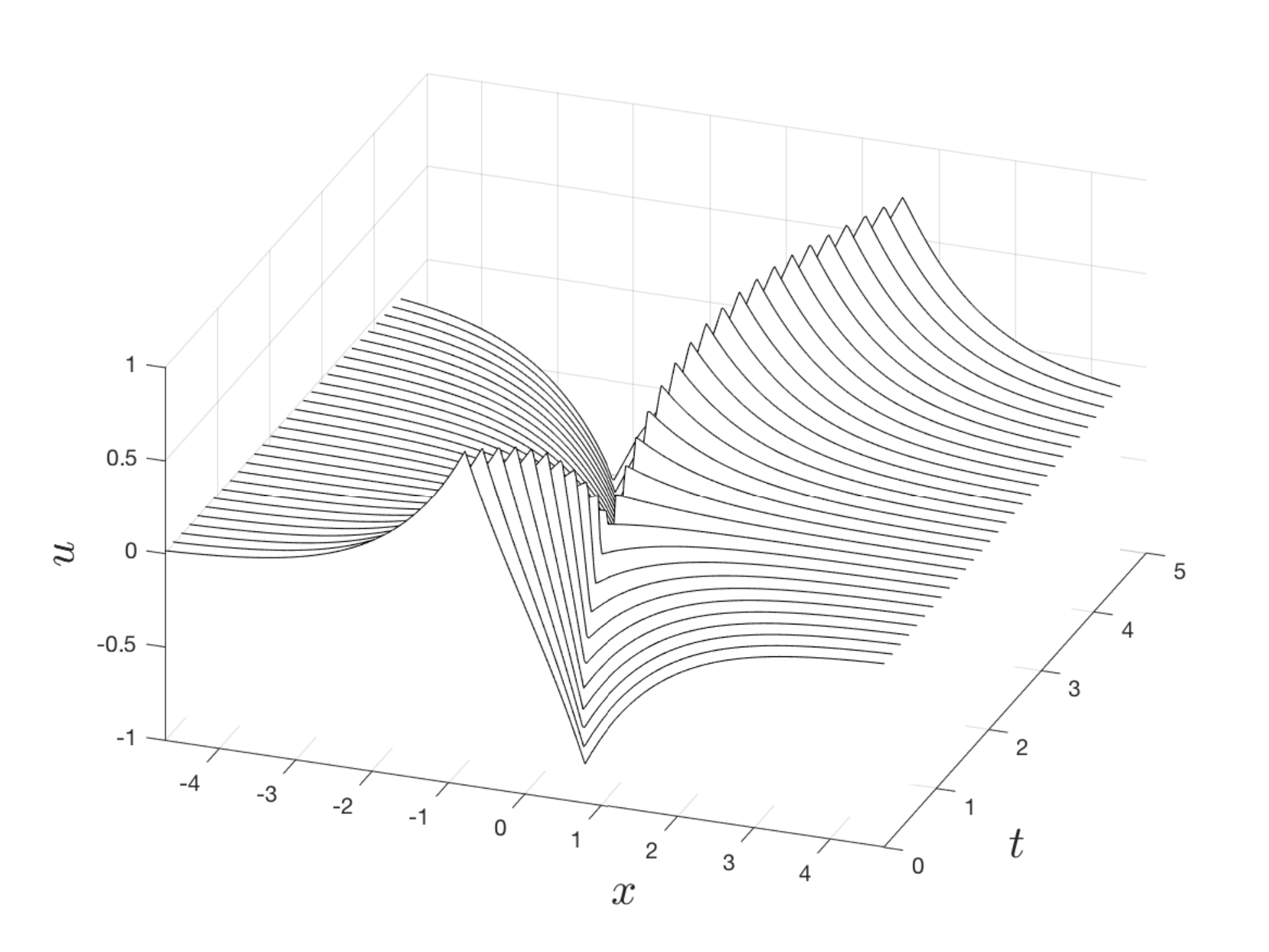}
    \subcaption{$u(t, x)$ for $\rho_0 \equiv 0$}
  \end{subfigure}
  \begin{subfigure}{0.495\textwidth}
    \includegraphics[width=\textwidth]{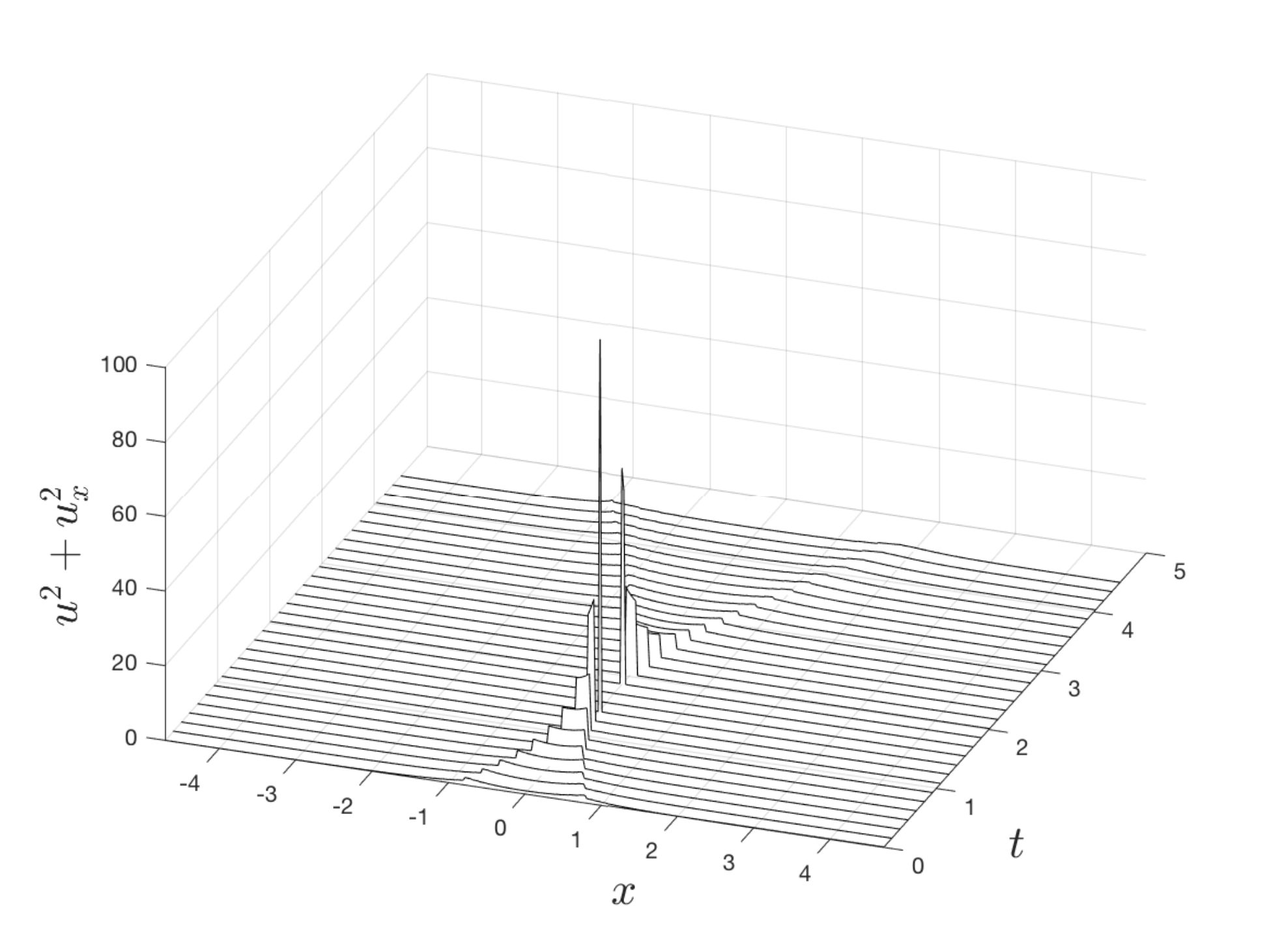}
    \subcaption{$u^2(t, x) + u_x^2(t, x)$ for $\rho_0 \equiv 0$}
  \end{subfigure}
  \begin{subfigure}{0.495\textwidth}
    \includegraphics[width=\textwidth]{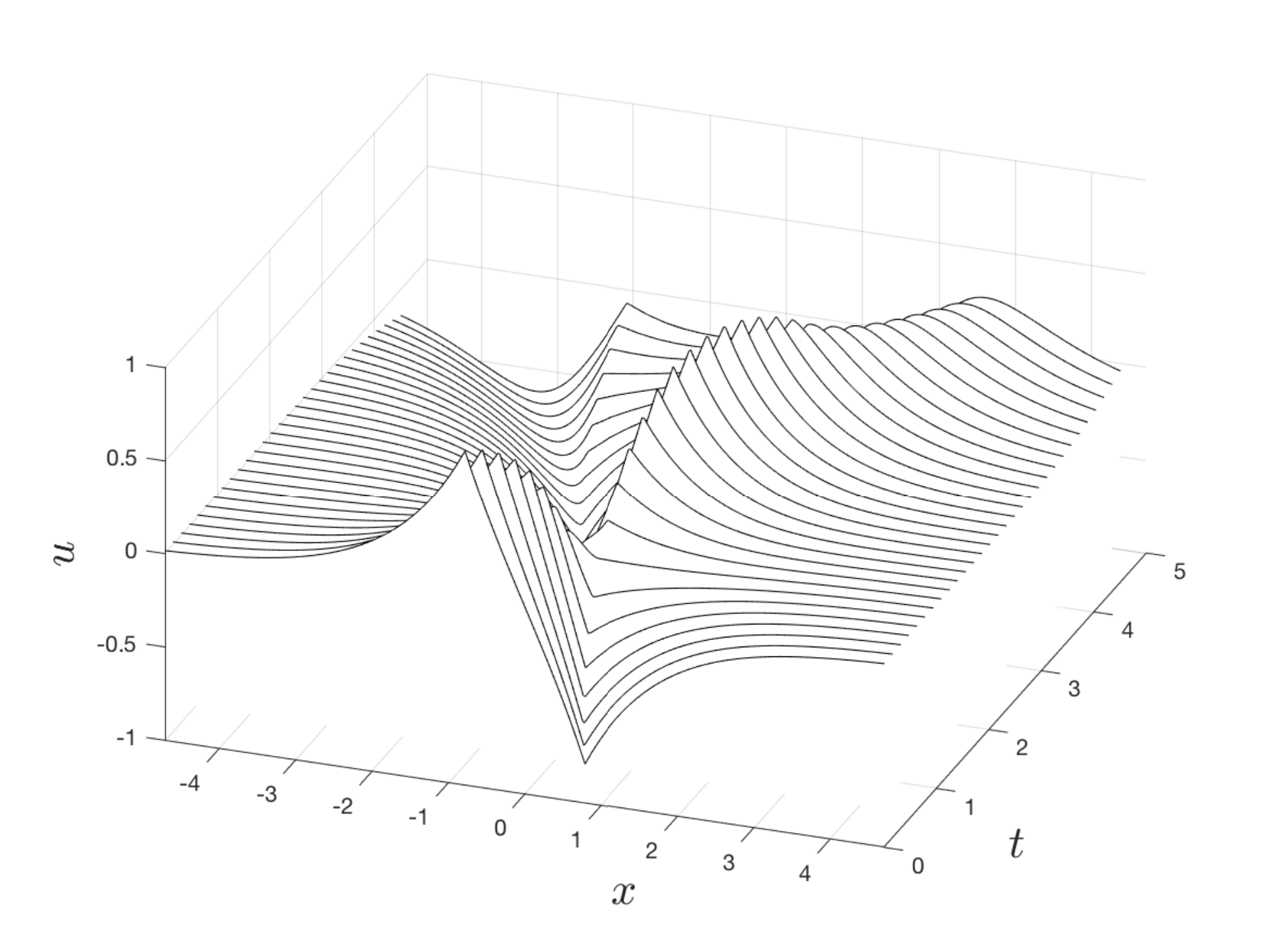}
    \subcaption{$u(t, x)$ for $\rho_0 \equiv 1$}
  \end{subfigure}
  \begin{subfigure}{0.495\textwidth}
    \includegraphics[width=\textwidth]{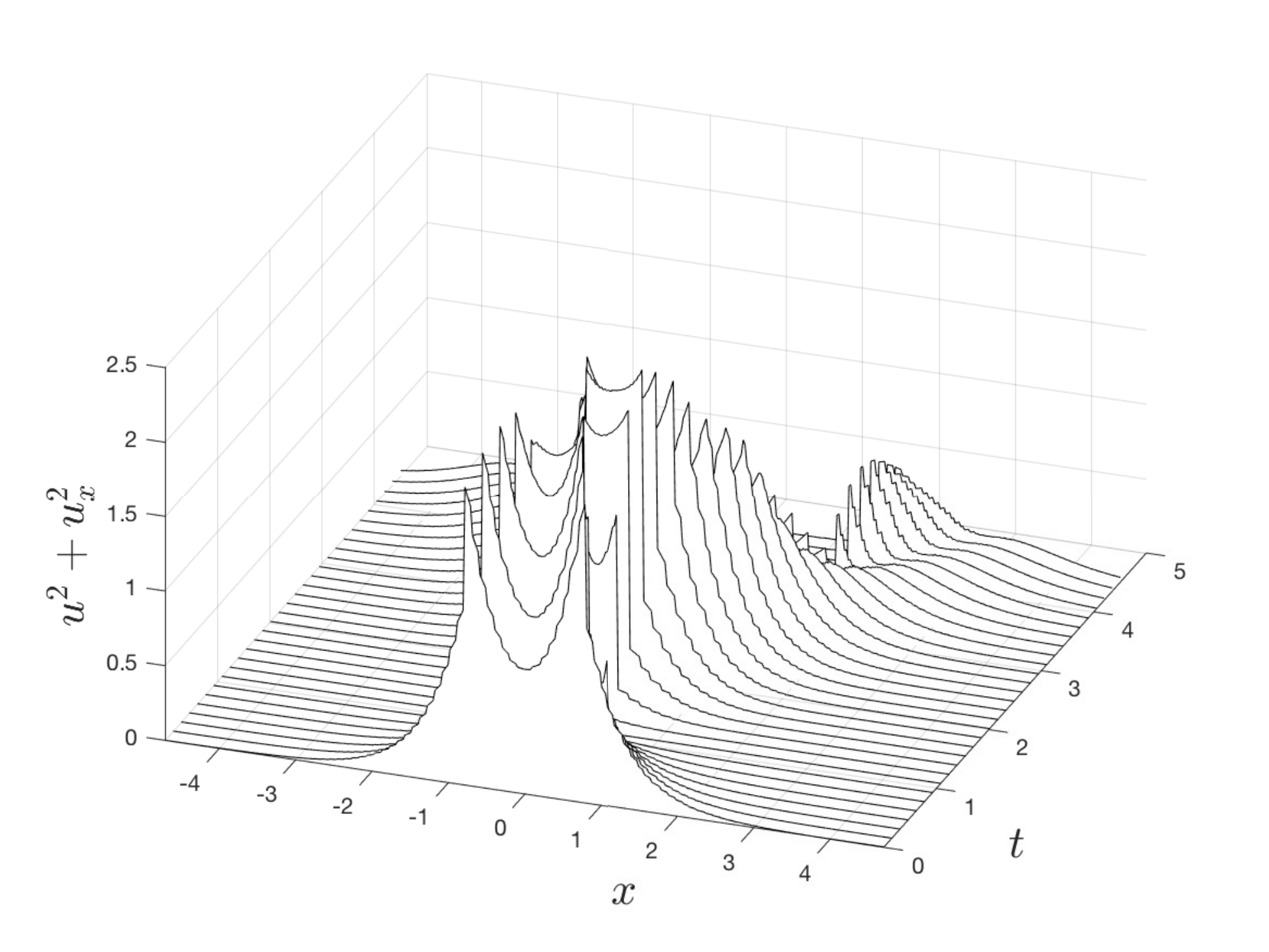}
    \subcaption{$u^2(t, x) + u_x^2(t, x)$ for $\rho_0 \equiv 1$}
  \end{subfigure}
  \begin{subfigure}{0.495\textwidth}
    \includegraphics[width=\textwidth]{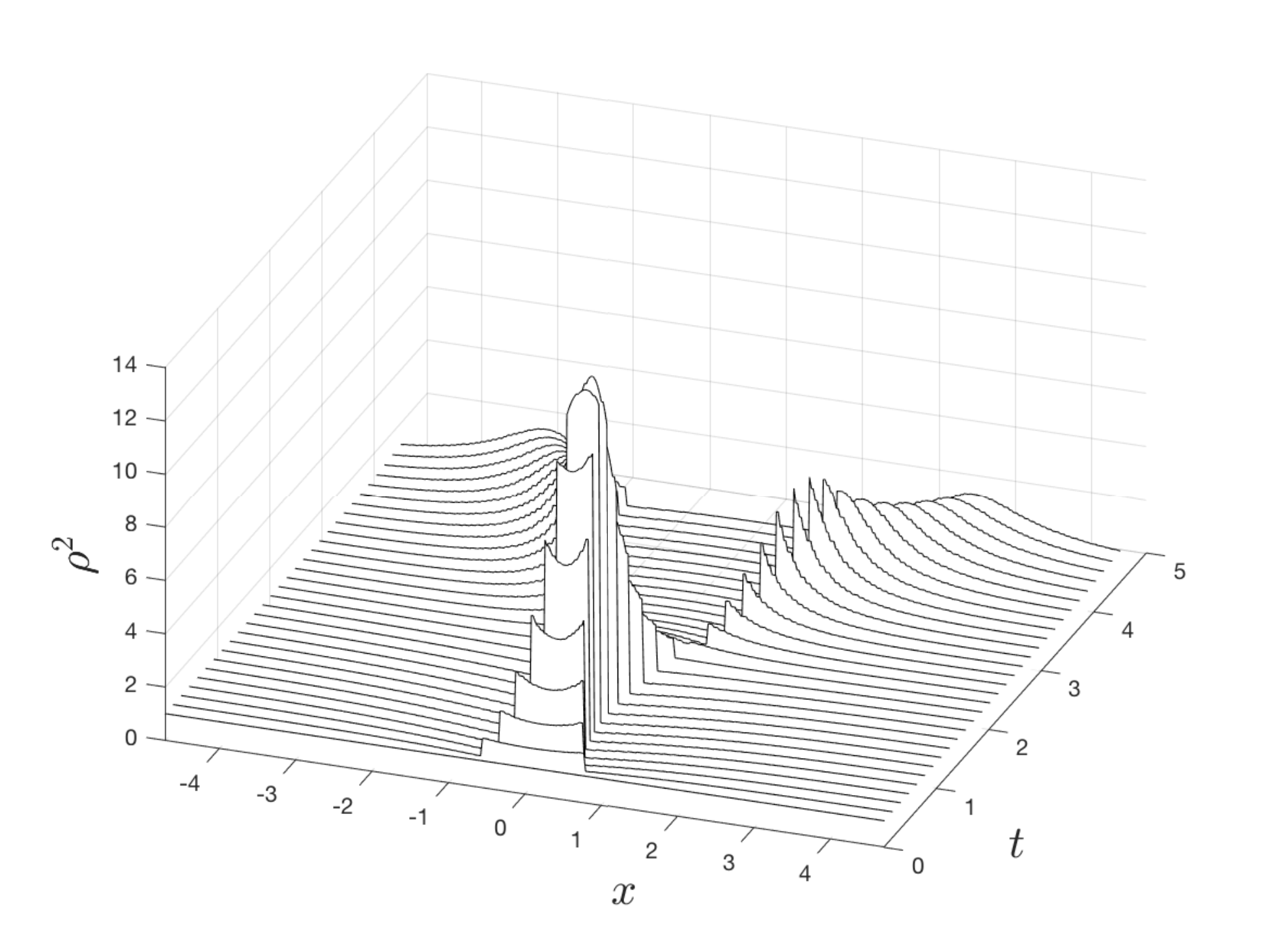}
    \subcaption{$(\rho(t,x) - \rho_\infty)^2$ for $\rho_0 \equiv 1$}
  \end{subfigure}
  \caption{Solutions for peakon-antipeakon initial data. For $\rho_0 \equiv 0$ we plot $u$ in (a) and $u^2 + u_x^2$ in (b). We observe the blow-up of $u_x$ at $t_c\approx 1.5$ and the
    concentration of energy. For the same initial $u_0$, but $\rho_0 \equiv 1$, we plot the corresponding solution in (c)--(e) and observe that $u_x$ does not blow up. In (e) we plot the distribution of the potential energy given by $(\rho(t,x) - \rho_\infty)^2$, and observe how it grows when the peaks get closer to each other.}
  \label{fig:mpeakon}
\end{figure}

\section{Derivation of the semi-discrete CH system using a variational approach}
\label{sec:disc}

As a motivation for our discretization, we will here outline how the system \eqref{eq:2CH} can be derived from
 a variational problem involving a potential term, as indicated in the previous section.
In a standard way, see,
e.g., \cite{arnold89}, the Lagrangian $\Lcal$ is defined as the difference of a kinetic and potential energy
\begin{equation}
  \label{eq:deflagrangian}
  \Lcal = \Ekin - \Epot,
\end{equation}
where the energies are given by \eqref{eq:defenergyH1} and \eqref{eq:defEpot}. The governing equation is then derived by
the least action principle, also called principle of stationary action, on the group of diffeomorphisms.

A first step in this direction is to introduce the particle path, denoted by $y(t,\xi)$.
Then, we rewrite $\Ekin$ and $\Epot$ in Lagrangian variables.  For the kinetic energy, we obtain
\begin{equation}
  \label{eq:defEkincontlag}
  \Ekin(t) = \frac{1}{2} \intR{\xi}{\left( y_{t}^2 y_{\xi} + \frac{y_{t\xi}^2}{y_{\xi}}\right)(t,\xi)}.
\end{equation}
From the principle of mass conservation for a control volume, the density satisfies
\begin{equation}
  \label{eq:defrho}
  \rho(t,y(t,\xi))y_\xi(t,\xi) = \rho(0,y(0,\xi)) y_\xi(0,\xi).
\end{equation}
The definition of $\rho$ given by \eqref{eq:defrho} is equivalent to the conservation law \eqref{eq:2CHb}. We can check this
statement directly:
\begin{equation*}
  \pdiff{t}{}(\rho(t,y) y_\xi) = (\rho_t(t, y) + \rho_x(t, y) u(t, y) + \rho(t, y) u_x(t, y))y_\xi = 0. 
\end{equation*}
We introduce the Lagrangian density $r$ defined as $r(t,\xi) = \rho(t,y(t,\xi))y_\xi(t,\xi)$, and by requiring it to be preserved in time, we impose the definition of the density $\rho$ in the system.
The identity \eqref{eq:defrho} allows us to reduce the number of variables in our Lagrangian.
Indeed, we rewrite the potential energy \eqref{eq:defEpot} in terms of the particle path $y$ only and obtain
\begin{equation}
  \label{eq:defEpotcontlag}
  \Epot(t) = \frac12 \intR{\xi}{\left(\rho_{0}(y(0, \xi))\frac{y_{\xi}(0,\xi)}{y_{\xi}(t,\xi)}-\rho_{\infty}\right)^2 y_{\xi} (t,\xi)}.
\end{equation}
Combining \eqref{eq:deflagrangian} with \eqref{eq:defEkincontlag} and \eqref{eq:defEpotcontlag} one can derive
\begin{equation}\label{eq:varderiv}
  \vdiff{y_t}{\Lcal(y,y_t)} = y_\xi y_t -\left(\frac{y_{t\xi}}{y_\xi}\right)_\xi, \qquad \vdiff{y}{\Lcal(y,y_t)} = -y_t y_{t\xi} + \frac12 \left(\frac{y_{t\xi}^2}{y_\xi^2} - \frac{\rho_0^2}{y_\xi^2}\right)_\xi,
\end{equation}
which must satisfy the Euler--Lagrange equation
\begin{equation}\label{eq:EL}
  \pdiff{t}{} \vdiff{y_t}{\Lcal(y,y_t)} = \vdiff{y}{\Lcal(y,y_t)}.
\end{equation}
Now, from the relations $(\rho \circ y)y_{\xi} = (\rho_{0}\circ y_0) (y_0)_\xi$, $y_{t} = u \circ y$, $y_{t\xi} = (u_x \circ y)y_{\xi}$, and $y_{tt} = (u_{t} + uu_{x}) \circ y$ which implies $y_{tt\xi} = ((u_{t} + uu_{x})_{x} \circ y ) y_{\xi}$, we can write
\begin{align*}
    \pdiff{t}{} \vdiff{y_{t}}{\Lcal(y,y_{t})} &= (y_{t} y_{\xi})_{t} - \left( \frac{y_{t\xi}}{y_{\xi}} \right)_{t\xi} = y_{tt}y_{\xi} + y_{t} y_{t\xi} - \left(\frac{y_{tt\xi}}{y_{\xi}} - \frac{y_{t\xi}^2}{y_{\xi}^2}\right)_{\xi} \\
    &= ((u_{t} + 2uu_{x}) \circ y)y_{\xi} - (((u_{t} + uu_{x})_{x} - u_{x}^2) \circ y)_{\xi} \\
    &= ((u_{t} - u_{txx} + 2uu_{x} - u_{x}u_{xx} - uu_{xxx}) \circ y)y_{\xi}
\end{align*}
and
\begin{align*}
  \vdiff{y}{\Lcal(y,y_{t})} &= ((-uu_{x} + u_{x}u_{xx}) \circ y)y_{\xi} -\frac{1}{2} \left( \left(\rho \circ y\right)^{2} \right)_{\xi} \\
  &= ((-uu_{x} + u_{x}u_{xx}) \circ y)y_{\xi}  - (\rho \rho_{x} \circ y)y_{\xi}.
\end{align*}
Inserting the above identities in the Euler--Lagrange equation \eqref{eq:EL} we get
\begin{equation*}
  ((u_{t} - u_{txx} + 3uu_{x} - 2u_{x}u_{xx} - uu_{xxx} + \rho \rho_{x}) \circ y)y_{\xi} = 0
\end{equation*}
which is exactly \eqref{eq:2CHa} when $y_{\xi} \neq 0$.

For the remainder of this section we give details of how we discretize the variational derivation
outlined above.
Let us start by discretizing the kinetic and potential energies given by \eqref{eq:defEkincontlag} and \eqref{eq:defEpotcontlag}, respectively.
First we divide the line into a uniform grid by defining $\xi_{j} = j \dxi$
for some discretization step $\dxi > 0$ and $j \in \Z$. We approximate $y(t, \xi_j)$ with $y_j(t)$ for $j\in \Z$, and the spatial
derivatives $y_\xi(t, \xi_j)$ with the finite difference $\D_+y_j$. The finite difference operators $\D_+$ and $\D_-$ are defined
as
\begin{equation}
  \label{eq:diff}
  \D_\pm y_{j} \coloneqq \pm\frac{y_{j\pm 1}-y_{j}}{\dxi},
\end{equation}
and they satisfy the discrete product rule
\begin{equation}
\label{eq:FD_prod}
\D_{\pm}(v_{j} w_{j}) = (\D_{\pm} v_{j}) w_{j \pm 1} + v_{j} (\D_{\pm} w_{j}).
\end{equation}
When we later encounter operators in the form of grid functions with two indices, such as $g_{i,j}$ for $i,j \in \Z$, we
will indicate partial differences by including the index in the difference operator, for instance $\D_{j+}g_{i,j} =
(g_{i,j+1}-g_{i,j})/\dxi$. We use the standard notation $\lbf^{p}$ and $\linfty$ for the Banach spaces with norms
\begin{equation}
  \label{eq:lpnorms}
  \|a\|_{\lbf^{p}} \coloneqq \left( \sumZxi{j} |a_{j}|^p \right)^{\frac{1}{p}} \quad\text{ and }\quad \|a\|_{\linfty} \coloneqq \sup_{j \in \Z}{|a_{j}|},
\end{equation}
with $1 \le p < \infty$.

Turning back to the energy functionals, we discretize the kinetic energy \eqref{eq:defEkincontlag} using finite differences and set
\begin{equation}
  \label{eq:defEkindis}
  \Ekindis \coloneqq \frac12 \sumZxi{j}{\left((\dot{y}_{j})^2 (\D_+ y_{j}) + \frac{(\D_+ \dot{y}_{j})^2}{\D_+ y_{j}}\right)}.
\end{equation}
The Lagrangian velocity is as usual defined as $U_i = \dot y_i$ and, using this notation, \eqref{eq:defEkindis} becomes
\begin{equation*}
  \Ekindis = \frac12\sumZxi{j}{\left(U_j^2\D_+y_j + \frac{(\D_+ U_j)^2}{\D_+y_j}\right)}.
\end{equation*}
The discrete counterpart of \eqref{eq:defEpotcontlag} is similarly defined as
\begin{equation}
  \label{eq:defEpotdis}  \Epotdis \coloneqq \frac{1}{2} \sumZxi{j}{\left(\frac{\D_+ y_{0,j}}{\D_+ y_{j}}\rho_{0,j} - \rho_{\infty} \right)^2 (\D_+ y_{j})},
\end{equation}
where $y_{0,j} = y_0(\xi_j)$ and $\rho_{0,j} \coloneqq \rho_{0}(y_{0,j})$. Now we define the Lagrangian as the difference between the kinetic and potential
energy,
\begin{equation*}
  \Lcaldis = \Ekindis - \Epotdis.
\end{equation*}
Now we compute the Fr\'{e}chet derivatives of $\Lcaldis$ with respect to $y$ and $\dot{y}$. This
derivative is given in $\ltwo$, the space of square summable sequence using the duality pairing defined by the scalar
product,
\begin{equation*}
\ipl{v}{w} \coloneqq \sumZxi{j}{v_{j} w_{j}}, \quad v, w \in \ltwo.
\end{equation*}
Formally, we have
\begin{align*}
  \begin{aligned}
    \delta \Ekindis &=  \sumZxi{j}{\left( U_{j} (\delta U)_{j} (\D_+ y_{j}) + \frac{\D_+ U_{j}}{\D_+ y_{j}} \D_+ (\delta U)_{j} \right)} \\
    &\quad+ \frac12 \sumZxi{j}{\left( (U_{j})^{2} \D_+ (\delta y)_{j} - \left(\frac{\D_+ U_{j}}{\D_+ y_{j}}\right)^{2} \D_+ (\delta y)_{j} \right)} \\
    &= \sumZxi{j}{\left( U_{j} (\D_+ y_{j}) - \D_- \left( \frac{\D_+ U_{j}}{\D_+ y_{j}} \right) \right) (\delta U)_{j} } \\
    &\quad- \sumZxi{j}{\frac{1}{2} \D_- \left( (U_{j})^{2} - \left(\frac{\D_+ U_{j}}{\D_+ y_{j}}\right)^{2} \right) (\delta y)_{j}},
  \end{aligned}
\end{align*}
where in the final identity we have used the summation by parts formula
\begin{equation}\label{eq:sum_by_parts}
\dxi\sum_{j=m}^{n}(\D_+a_j)b_j + \dxi \sum_{j=m}^{n}a_j(\D_-b_j) = a_{n+1}b_n - a_m b_{m-1}.
\end{equation}
This leads to the Frechet derivatives
\begin{equation*}
  \left( \vdiff{y}{\Ekindis} \right)_j = -\frac{1}{2} \D_- \left((U_{j})^2 - \left(\frac{\D_+ U_{j}}{\D_+ y_{j}}\right)^2 \right)
\end{equation*}
and
\begin{equation}
  \label{eq:vdifflkin}
  \left( \vdiff{U}{\Ekindis} \right)_j = U_{j} (\D_+ y_{j}) - \D_- \left( \frac{\D_+ U_{j}}{\D_+ y_{j}} \right) = \left( \vdiff{U}{\Lcaldis} \right)_j,
\end{equation}
where the rightmost equality in \eqref{eq:vdifflkin} is a consequence of $\Epotdis$ being independent of $U$.
For the potential term we find
\begin{align*}
  \delta \Epotdis &= \frac{\dxi}{2} \sumZ{j}{}\Big( -2 \left( \frac{\D_+ y_{0,j}}{\D_+ y_{j}}\rho_{0,j} - \rho_{\infty} \right) \frac{\D_+ y_{0,j}}{\D_+ y_{j}}\rho_{0,j} \D_+(\delta y)_{j}\\
  &\quad + \left( \frac{\D_+ y_{0,j}}{\D_+ y_{j}}\rho_{0,j} - \rho_{\infty} \right)^{2} \D_+(\delta y)_{j} \Big) \\
                  &= \sumZxi{j}{\frac{1}{2} \D_- \left( \left(\frac{\D_+ y_{0,j}}{\D_+ y_{j}}\rho_{0,j}\right)^{2} - \rho_{\infty}^{2} \right)\delta y_{j}},
\end{align*}

which gives the Frechet derivative
\begin{equation*}
  \left(\vdiff{y}{\Epotdis}\right)_{j} = \frac{1}{2}\D_- \left( \left(\frac{\D_+ y_{0,j}}{\D_+ y_{j}}\rho_{0,j}\right)^{2} - \rho_{\infty}^{2} \right).
\end{equation*}
The Euler--Lagrange equation is then
\begin{equation}
  \vdiff{y}{\Lcaldis} - \diff{t}{} \vdiff{U}{\Lcaldis} = 0,
  \label{eq:dEL}
\end{equation}
see, e.g., \cite{arnold89}. From \eqref{eq:dEL} we then have
\begin{align*}
  \begin{aligned}
    \diff{t}{} \left(\vdiff{U}{\Lcaldis}\right)_{j} &= \diff{t}{}\left(U_{j} (\D_+ y_{j}) - \D_- \left( \frac{\D_+ U_{j}}{\D_+ y_{j}} \right)\right) \\
    &= \dot{U}_{j} (\D_+ y_{j}) - \D_- \left( \frac{\D_+ \dot{U}_{j}}{\D_+ y_{j}} \right) + U_{j} (\D_+ U_{j}) + \D_-\left( \left(\frac{\D_+ U_{j}}{\D_+ y_{j}}\right)^{2} \right),
  \end{aligned}
\end{align*}
which leads to the following system of governing equations
\begin{subequations}\label{eq:semidisc_sys}
  \begin{equation}
    \dot{y}_j = U_j
  \end{equation}
  and
  \begin{multline}
    (\D_+ y_{j}) \dot{U}_{j} - \D_- \left(\frac{\D_+ \dot{U}_{j}}{\D_+ y_{j}}\right) \\ = -U_{j} (\D_+ U_{j}) - \frac{1}{2}
    \D_-\left( (U_{j})^2 + \left(\frac{\D_+ U_{j}}{\D_+ y_{j}}\right)^2 + \left(\frac{\D_+ y_{0,j}}{\D_+ y_{j}}\rho_{0,j}\right)^2 \right),
  \end{multline}
\end{subequations}
for $j \in \Z$. Note that we have omitted $\rho_{\infty}^2$ on the right hand side in \eqref{eq:semidisc_sys} as $\D_-$ maps
constants to zero.

We can use the Legendre transform to define the Hamiltonian
\begin{equation}
  \label{eq:ham_dis}
  \Hcal_{\text{dis}} = \ipl{\vdiff{U}{\Lcaldis}}{U} - \Lcaldis.
\end{equation}
Writing out the above Hamiltonian explicitly we have
\begin{equation}
  \label{eq:HamiltonianDis}
  \Hcaldis = \frac12 \sumZxi{j}{\left( (U_{j})^2 + \left(\frac{\D_+ U_{j}}{\D_+ y_{j}}\right)^2 + \left(\frac{\D_+ y_{0,j}}{\D_+ y_{j}}\rho_{0,j} - \rho_{\infty} \right)^2 \right) (\D_+ y_{j})}.
\end{equation}
We observe that the Lagrangian $\Lcaldis$ does not depend explicitly on time. Then it is a classical result of mechanics, which follows from Noether's
theorem, that $\Hcaldis$ is time-invariant,
\begin{equation*}
  \diff{t}{\Hcaldis} = 0.
\end{equation*}
The Lagrangian $\Lcaldis$ is also invariant with respect to translation so that an other time invariant can be obtained. We denote by
$\psi:\ltwo\times\R\to\ltwo$ the transformation given by the uniform translation $(\psi(y, \epsi))_j = y_j + \epsi$. For
simplicity, we write $y^\epsi(t) = \psi(y(t), \epsi)$. From the definition of $\psi$, we have
\begin{equation*}
  \dot y^\epsi(t) = \dot y(t)\quad\text{ and } \quad \D_+y^\epsi(t) = \D_+y(t).
\end{equation*}
Hence, the Lagrangian $\Lcaldis$ is invariant with respect to the transformation $\psi$. Then Noether's theorem gives us that the
quantity $\ipl{\vdiff{U}{\Lcaldis}}{\vdiff{\epsi}{y^\epsi}}$ is preserved by the flow. In our case,
$\left(\vdiff{\epsi}{y^\epsi}\right)_j = 1$ and $\left(\vdiff{U}{\Lcaldis}\right)_j = U_{j} (\D_+ y_{j}) - \D_- \left( \frac{\D_+
    U_{j}}{\D_+ y_{j}} \right)$, see \eqref{eq:vdifflkin}. Thus, we obtain that the quantity
\begin{equation}\label{eq:I}
  I = \sumZxi{j}{\left( U_{j} (\D_+ y_{j}) - \D_- \left( \frac{\D_+ U_{j}}{\D_+ y_{j}} \right) \right) } = \sumZxi{j}{U_{j} (\D_+ y_{j})},
\end{equation}
is preserved. Note that $I$ corresponds to a discretization of
\begin{equation*}
  \int_\R (u - u_{xx})\,dx = \int_\R u \,dx
\end{equation*}
in Eulerian coordinates, which is preserved by the 2CH system.

Let us return to \eqref{eq:semidisc_sys}, and in particular to the left-hand side which contains $\dot U_j$, but not in an explicit
form. For a given sequence $a = \{a_j\}_{j\in\Z} \in \linfty$ and an arbitrary sequence $w = \{w_j\}_{j\in\Z} \in \ltwo$, we
define the operator $\A{a} : \ltwo \to \ltwo$ by
\begin{equation}
  (\A{a}w)_{j} \coloneqq a_j w_j - \D_- \left(\frac{\D_+ w_{j}}{a_{j}}\right), \quad j \in \Z.
  \label{eq:defAop}
\end{equation}
When $a = \D_+y$, \eqref{eq:defAop} corresponds to the momentum operator $m$ in Lagrangian coordinates,
and takes the form of a discrete Sturm--Liouville operator.
This operator is symmetric and positive definite for sequences $a$ such that $a_{j} > 0$, as we can see from
\begin{equation*}
  \sumZxi{j}{v_{j}(\A{a}w)_{j}} = \sumZxi{j}{\left(a_{j}w_{j}v_{j} + \frac{1}{a_{j}} (\D_+ w_{j}) (\D_+ v_{j}) \right)},
\end{equation*}
where we once more have used \eqref{eq:sum_by_parts}.
When $\A{\D_+ y}$ is positive definite, it is invertible and we may
formally write \eqref{eq:semidisc_sys} as a system of first order ordinary
differential equations,
\begin{align}
  \begin{aligned}
    \dot{y}_{j} &= U_{j}, \\
    \dot{U}_{j} &= - \A{\D_+y}^{-1} \left( U_j (\D_+ U_j) + \frac{1}{2} \D_-\left( (U_j)^2 + \left(\frac{\D_+ U_j}{\D_+ y_j}\right)^2 + \left(\frac{\D_+ y_{0,j}}{\D_+ y_{j}}\rho_{0,j}\right)^2 \right) \right).
  \end{aligned}
  \label{eq:semidisc_sys_formal}
\end{align}
When solving the above system, we obtain approximations of the fluid velocity and density in Lagrangian variables, $U_{j}(t)
\approx u(t,y(t,\xi_{j}))$ and $\rho_{0,j} / (\D_+ y_{j}(t)) \approx \rho(t,y(t,\xi_{j}))$.

We conclude this section with some comments on the Hamiltonian form of the equations. Hamiltonian equations in generalized position
and momentum variables follow from the Lagrangian approach in classical mechanics, see, e.g., \cite{arnold89}. The generalized
momentum is defined as $p=\vdiff{U}{\Lcaldis}(y, U)$. When we express the Hamiltonian $\Hcaldis$ given in
\eqref{eq:HamiltonianDis} in term of $y$ and $p$, the Hamiltonian equations are then given as usual by
\begin{equation}
  \label{eq:hamsys}
  \dot{y} = \vdiff{p}{\Hcaldis}, \quad \dot{p} = -\vdiff{y}{\Hcaldis}.
\end{equation}
From \eqref{eq:I}, we get that the momentum is $p_j = (\A{\D_+ y}U)_j$. Hence, $U_j = (\A{\D_+ y}^{-1}p)_j$, and the Hamiltonian
\eqref{eq:HamiltonianDis} is
\begin{equation*}
  \Hcaldis = \frac12 \sumZxi{j}{p_j (\A{\D_+ y}^{-1}p)_j} + \Epotdis.
\end{equation*}
If we introduce the fundamental solution $g_{i,j}$ of the operator $\A{\D_+y}$, see Section \ref{sec:aux}, the Hamiltonian
can be rewritten as
\begin{equation*}
  \Hcaldis = \frac12 \sumZxi{j}{p_j \sumZxi{i}{g_{i,j} p_i}} = \frac12 \sumZ{i,j}{(\dxi p_i)(\dxi p_j)g_{i,j} } + \Epotdis.
\end{equation*}
In the case $\rho = 0$ (that is $\Epotdis =0$), we recognized the similarity of this expression with
\begin{equation*}
  \HcalMP = \frac12 \sum_{i,j=1}^{N}p_i p_j e^{-|y_i-y_j|}
\end{equation*}
given in \cite{Camassa1993}. The Hamiltonian $\HcalMP$ defines the multipeakon solutions, which can be seen as another form of
discretization for the CH equation, see \cite{NumMP_HolRay2008} for the global conservative case. Then, the two discretization appear as the
results of two different choices of discretization for the inverse momentum operator: $g_{i,j}$ in the case of this paper
and $\hat g_{i,j} = e^{-|y_i-y_j|}$ in \cite{Camassa1993}.
We note that a numerical study of discretizations of the periodic CH equation considering both multipeakons and the variational method presented in this paper can be found in \cite{vardisc_num}.

\section{Construction of the fundamental solutions of the discrete momentum operator}
\label{sec:aux}

In this section we construct a Green's function, or fundamental solution, for the operator defined in \eqref{eq:defAop}.  Note that
when $a = \D_+ y$ coincides with the constant sequence $\mathbf{1} = \{1\}_{j\in\Z}$ we have from \eqref{eq:defAop} that $\A{\mathbf{1}} = \id - \D_{-}\D_{+}$, which corresponds to the
operator used in the difference schemes studied in \cite{Coclite2008, Holden2006}.  As the coefficients are
constant, the authors are able to find an explicit Green's function $g$ which can be written as
\begin{equation}
  g_{j} = \frac{1}{\sqrt{4+\dxi^2}} \left(1+\frac{\dxi^2}{2} + \frac{\dxi}{2}\sqrt{4+\dxi^2}\right)^{-|j|}
  \label{eq:greens_eul}
\end{equation}
and fulfills $(\id-\D_{-}\D_{+})g = \delta_{0}$. Here $\delta_{0} = \{\delta_{0,j}\}_{j\in\Z}$ for the Kronecker delta
$\delta_{i,j}$, equal to one when the indices coincide and zero otherwise. In our case, the coefficients appearing in the
definition of $\A{\D_+ y}$ are varying with the grid index $j$, which significantly complicates the construction of the Green's
function.

Let us consider the operator $\A{a}$ from \eqref{eq:defAop} and the equation $(\A{a}g)_j = f_j$. We want to prove that there
exists a solution which decreases exponentially as $j \to \pm\infty$.  To this end, we want to find a Green's function for the
operator $\A{a}$, and the first step is to realize that the homogeneous operator equation $(\A{a}g)_j = 0$ can be written as
\begin{equation*}
  \frac{\D_+ g_j}{a_j} = \dxi a_j g_j + \frac{\D_+ g_{j-1}}{a_{j-1}}.
\end{equation*}
This can again be restated as a \emph{Jacobi difference equation}, see \cite[Eq.\ (1.19)]{JacobiOperator},
\begin{equation*}
  -\frac{1}{a_j} g_{j+1} + \left( \frac{1}{a_j} + \frac{1}{a_{j-1}} + a_j (\dxi)^2 \right) g_j - \frac{1}{a_{j-1}}g_{j-1} = 0,
\end{equation*}
or equivalently in matrix form
\begin{equation}
  \begin{bmatrix}
    g_{j} \\ g_{j+1}
  \end{bmatrix} = \begin{bmatrix}
    0 & 1 \\ -\frac{a_j}{a_{j-1}} & 1 + \frac{a_j}{a_{j-1}} + (a_j \dxi)^2
  \end{bmatrix} \begin{bmatrix}
    g_{j-1} \\ g_{j}
  \end{bmatrix} \eqqcolon \tilde{A}_{j} \begin{bmatrix}
    g_{j-1} \\ g_{j}
  \end{bmatrix}.
  \label{eq:nonsym}
\end{equation}
Observe that $\tilde{A}_j$ is not symmetric and always contains positive, negative and zero entries under the assumption $a_j >
0$.  Moreover, $\tilde{A}_j$ is ill-defined when $a_{j-1} = 0$, which corresponds to the occurrence of a singularity in the system.
We want to allow for this in our discretization in order to obtain solutions globally in time.
If we go back to the first restatement of the operator equation and introduce the variable
\begin{equation}
  \gamma_j \coloneqq \frac{\D_+g_j}{a_j} = \frac{g_{j+1}-g_j}{a_j \dxi},
  \label{eq:defgamma}
\end{equation}
we get the following characterization of the homogeneous problem
\begin{equation}
  \begin{bmatrix}
    -\D_{+} & a_j \\ a_j & -\D_{-}
  \end{bmatrix} \begin{bmatrix}
    g_j \\ \gamma_{j}
  \end{bmatrix} = \begin{bmatrix}
    0 \\ 0
  \end{bmatrix},
  \label{eq:hom_g}
\end{equation}
or equivalently
\begin{equation}
  \begin{bmatrix}
    g_{j+1} \\ \gamma_j
  \end{bmatrix} = \begin{bmatrix}
    1 + (a_j \dxi)^2 & a_j \dxi \\ a_j \dxi & 1
  \end{bmatrix} \begin{bmatrix}
    g_j \\ \gamma_{j-1}
  \end{bmatrix} \eqqcolon A_j \begin{bmatrix}
    g_j \\ \gamma_{j-1}
  \end{bmatrix}.
  \label{eq:forw_g}
\end{equation}
Here $A_j$ is a symmetric matrix with positive entries whenever $a_j > 0$, and it reduces to the identity matrix when $a_j = 0$.
We will use \eqref{eq:forw_g} rather than \eqref{eq:nonsym} to construct our Green's function, and it will also significantly simplify
the analysis of the asymptotic behavior of the solutions.
\begin{lem}[Properties of matrix $A_j$] \label{lem:propAj}
  Consider $A_j$ from \eqref{eq:forw_g} and assume $a_j = 1 + \D_+b_j \ge 0$ where $\D_+b \in \ltwo$.
  Then $\det{A_j} = 1$ and there exist $M_b > m_b > 0$ depending on $\normltwo{\D_+b}$ and $\dxi$ such that the eigenvalues $\lambda^\pm_j$ of $A_j$ satisfy
  \begin{equation}
    m_b \le \lambda^-_j < 1 < \lambda^+_j \le M_b
    \label{eq:eigAjbnd}
  \end{equation}
  uniformly with respect to $j$ when $a_j > 0$.
  Moreover there is the obvious identity $\lambda^\pm_j = 1$ when $a_j = 0$.
  Asymptotically we have $\lim_{j \to \pm\infty}A_j = A$, where $A$ is given by $A_j$ after setting $a_j = 1$, and the eigenvalues $\lambda^\pm$ of $A$ satisfy
  \begin{equation}
    m \le \lambda^- < 1 < \lambda^+ \le M
    \label{eq:eigAbnd}
  \end{equation}
  for $M > m > 0$ depending only on $\dxi$.
  Moreover, as the eigenvalues are strictly positive it follows that the spectral radius of $A_j$, $\spr(A_j) \coloneqq \max\{|\lambda_j^+|, |\lambda_j^-|\}$ satisfies $\norm{A_j} = \spr(A_j) = \lambda^+_j$, $\norm{A} = \spr(A) = \lambda^+$, and both matrices can be diagonalized: $A_j = R_j \Lambda_j R_j^\tp$, $A = R \Lambda R^\tp$.
\end{lem}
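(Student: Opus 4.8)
The plan is to exploit the elementary $2\times 2$ structure of $A_j$, reducing everything to an analysis of its trace, since the determinant is fixed. First I would compute $\det A_j$ directly: expanding gives $\det A_j = (1 + (a_j\dxi)^2)\cdot 1 - (a_j\dxi)^2 = 1$, independently of $j$ and of $a_j$. Because the determinant equals $1$, the two eigenvalues are reciprocal, $\lambda^+_j\lambda^-_j = 1$, so it suffices to control one of them. Writing $\tau_j \coloneqq \tr A_j = 2 + (a_j\dxi)^2$, the characteristic equation is $\lambda^2 - \tau_j\lambda + 1 = 0$, whence $\lambda^\pm_j = \tfrac12(\tau_j \pm \sqrt{\tau_j^2 - 4})$.

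When $a_j > 0$ we have $\tau_j > 2$, so the discriminant $\tau_j^2 - 4$ is strictly positive and the two roots are real, distinct and positive with product $1$; the larger satisfies $\lambda^+_j > 1$, and hence $\lambda^-_j = 1/\lambda^+_j < 1$, giving the strict ordering in \eqref{eq:eigAjbnd}. The only nontrivial point is the uniform bound. For this I would use the embedding $\ltwo \hookrightarrow \linfty$: from the definition \eqref{eq:lpnorms} one has $\normslinfty{\D_+b} \le \dxi^{-1/2}\normsltwo{\D_+b}$, so $a_j = 1 + \D_+b_j$ is bounded above uniformly in $j$ by a constant depending only on $\normsltwo{\D_+b}$ and $\dxi$. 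This bounds $\tau_j$ above by some $\tau_{\max}$, and since the map $t\mapsto \tfrac12(t + \sqrt{t^2-4})$ is increasing on $[2,\infty)$, it yields $\lambda^+_j \le M_b \coloneqq \tfrac12(\tau_{\max} + \sqrt{\tau_{\max}^2-4})$ and consequently $\lambda^-_j = 1/\lambda^+_j \ge 1/M_b \eqqcolon m_b$. The case $a_j = 0$ is immediate: then $A_j$ reduces to the identity and $\lambda^\pm_j = 1$.

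For the asymptotic statement I would note that $\D_+b \in \ltwo$ forces $\D_+b_j \to 0$ as $j \to \pm\infty$ (the terms of a convergent series vanish), hence $a_j \to 1$ and $A_j \to A$ entrywise, with $A$ obtained by setting $a_j = 1$. Applying the identical trace--determinant computation to $A$ --- now with $\tau = 2 + \dxi^2 > 2$ --- gives the eigenvalue bounds \eqref{eq:eigAbnd} with $m = 1/\lambda^+$ and $M = \lambda^+$ depending only on $\dxi$. Finally, since both matrices are real symmetric with strictly positive eigenvalues, the spectral theorem provides the orthogonal diagonalizations $A_j = R_j\Lambda_j R_j^\tp$ and $A = R\Lambda R^\tp$; positivity of the eigenvalues gives $\spr(A_j) = \lambda^+_j$, and for symmetric matrices the operator norm coincides with the spectral radius, so $\norm{A_j} = \spr(A_j) = \lambda^+_j$ and likewise for $A$.

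The main obstacle is the mild one of making the eigenvalue bounds uniform in $j$; everything else is a direct consequence of the fixed determinant and the reciprocity $\lambda^-_j = 1/\lambda^+_j$ it induces. The only care needed is to route the uniform control of $a_j$ through the $\ltwo \hookrightarrow \linfty$ embedding rather than through any pointwise hypothesis, since the assumption is stated in terms of $\normsltwo{\D_+b}$.
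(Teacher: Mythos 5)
Your proposal is correct and follows essentially the same route as the paper: both compute $\det A_j=1$, obtain the eigenvalues from the quadratic $\lambda^2-\tau_j\lambda+1=0$ (your $\tfrac12(\tau_j\pm\sqrt{\tau_j^2-4})$ is algebraically identical to the paper's $\tfrac14(\sqrt{4+(a_j\dxi)^2}\pm a_j\dxi)^2$), bound $a_j$ uniformly via $\sqrt{\dxi}\,\abss{\D_+b_j}\le\normsltwo{\D_+b}$, and invoke symmetry for the diagonalization and the identification of the norm with the spectral radius. Your use of the reciprocity $\lambda^-_j=1/\lambda^+_j$ to get the lower bound $m_b$ is a minor streamlining of the paper's explicit two-sided bound, not a different argument.
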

\begin{proof}[Proof of Lemma \ref{lem:propAj}]
  To see that $\det{A_j} = 1$ one can compute it directly, or see it from the eigenvalues
  \begin{align}
    \begin{aligned}
      \lambda^\pm_j &\coloneqq 1 + \frac{(a_j \dxi)^2}{2} \pm \frac{a_j \dxi}{2}\sqrt{4 + (a_j \dxi)^2} \\
      &= \frac{1}{4}\left( \sqrt{4 + (a_j\dxi)^2} \pm a_j \dxi \right)^2,
    \end{aligned}
        \label{eq:eigAj}
  \end{align}
  which shows that $A_j$ is invertible irrespective of the value of $a_j$.
  As $A_j$ is real and symmetric, it can be diagonalized with orthonormal eigenvectors $r^\pm_j$ as follows
  \begin{equation}
    A_j = R_j \Lambda_j R_j^\tp, \quad \Lambda_j = \begin{bmatrix} \lambda^-_j & 0 \\ 0 & \lambda^+_j \end{bmatrix}, \quad R_j = \begin{bmatrix}  \ds\frac{1}{\sqrt{1 + \lambda^+_j}} & \ds\frac{1}{\sqrt{1+ \lambda^-_j}} \\ \ds-\frac{1}{\sqrt{1 + \lambda^-_j}} & \ds\frac{1}{\sqrt{1 + \lambda^+_j}} \end{bmatrix}.
    \label{eq:diagAj}
  \end{equation}
  Since $\D_+b \in \ltwo$, for any $j \in \Z$ we have the bound
  \begin{equation*}
  \sqrt{\dxi}\abs{\D_+b_j} = \left(\dxi \abs{\D_+b_j}^2 \right)^{1/2} \le \normltwo{\D_+b}
  \end{equation*}
  which leads to
  \begin{equation*}
    0 \le a_j\dxi \le \dxi +
    \sqrt{\dxi}\normltwo{\D_+b} \eqqcolon K_b,
  \end{equation*}
  meaning $a_j$ is bounded from above and below.
  Then it follows that
  \begin{equation*}
    0 < \left(\frac{\sqrt{4 + K_b^2}-K_b}{2}\right)^2 \le \lambda^-_j \le  1 \le  \lambda^+_j \le \left(\frac{\sqrt{4 + K_b^2}+K_b}{2}\right)^2 < (1+K_b)^2,
  \end{equation*}
  corresponding to \eqref{eq:eigAjbnd}.  Furthermore, since $\D_+b_j\in\ltwo$, we have $\lim_{j\to\pm\infty} a_j \dxi = \dxi$. We denote
  by $A$, $\Lambda$, $R$, and $\lambda^\pm$ the matrices and eigenvalues given by $A_j$, $\Lambda_j$, $R_j$, and $\lambda_j^\pm$
  after replacing $a_j$ by 1.  From the preceding limit, \eqref{eq:eigAj} and \eqref{eq:diagAj} we obtain
  \begin{equation}
    \lim\limits_{j\to\pm\infty} (A_j, \Lambda_j, R_j) = (A,\Lambda, R).
    \label{eq:matlim}
  \end{equation}
  Bounds for $\lambda^\pm$ are obtained similarly to the bounds for $\lambda^\pm_j$.
  As $A_j, A$ are symmetric and hence normal, their norms coincide with the spectral radius $\spr(\cdot)$ which here coincides with the largest eigenvalue.
\end{proof}
Note that \eqref{eq:forw_g} corresponds to a transition from $(g_j, \gamma_{j-1})$ to $(g_{j+1}, \gamma_j)$, so that $A_j$ can be
considered as a transfer matrix between these two states. Thus, solving the homogeneous operator equation $(\A{a}g)_j = 0$ bears
clear resemblance to propagating a discrete dynamical system, and this is also the idea employed in the analysis of Jacobi
difference equations in \cite[Eq.\ (1.28)]{JacobiOperator}.  However, in making the change of variable to obtain \eqref{eq:forw_g}
we lose the symmetry of the difference equation, and so the results in \cite{JacobiOperator} are no longer directly applicable.
On the other hand, our system can be regarded as a more general Poincar\'{e} difference system, and our idea is then to apply the
results \cite[Thm.\ 1.1]{Friedland2006} and \cite[Thm.\ 1]{Pituk2002} to the matrix product
\begin{equation}
  \Phi_{k,j} \coloneqq \begin{cases}
    A_{k-1}\dots A_j, & k > j, \\
    I, & k = j, \\
    (A_k)^{-1} \dots (A_{j-1})^{-1}, & k < j
  \end{cases},
  \label{eq:transfer}
\end{equation}
which is the transition matrix from $(g_j,\gamma_{j-1})$ to $(g_k, \gamma_{k-1})$.
Note that in the lemma below, the norms can be taken to be the standard Euclidean norm, but one could use any vector norm.

\begin{lem}[Existence of exponentially decaying solutions]\label{lem:Lyapunov}
  Consider the matrix equation
  \begin{equation}\label{eq:diff_sys}
    v_n = (\Phi_{n,0}) v_0, \qquad v_n = \begin{bmatrix}
      g_n \\ \gamma_{n-1}
    \end{bmatrix},
  \end{equation}
  coming from \eqref{eq:forw_g} with $\Phi_{n,0}$ as defined in \eqref{eq:transfer}.
  Then there exist initial vectors $v_0 = v_0^\pm$ such that the corresponding solutions $v^\pm_n$ satisfy
  \begin{equation}\label{eq:decay}
    \lim\limits_{n\to\mp\infty}\sqrt[n]{\norm{v^\pm_n}} = \lambda^-.
  \end{equation}
  That is, there exist solutions $v_n$ with exponential decay in either direction, owing to the Lyapunov exponent $\lambda^- < 1$.
  Moreover, the initial vectors are unique up to a constant factor.
\end{lem}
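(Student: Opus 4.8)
The plan is to treat \eqref{eq:diff_sys} as a Poincar\'e-type difference system and to read off its decaying solutions from the spectrum of the limiting matrix $A$, invoking the generalizations of the Poincar\'e--Perron theory in \cite{Pituk2002, Friedland2006}. First I would collect the structural facts furnished by Lemma~\ref{lem:propAj}: the coefficient matrices satisfy $A_j \to A$ as $j \to \pm\infty$, each has $\det A_j = \det A = 1$, and the limit $A$ has two positive eigenvalues of distinct modulus, $\lambda^- < 1 < \lambda^+$, linked by $\lambda^+\lambda^- = \det A = 1$. This distinct-modulus (hence simple) spectrum is precisely the nondegeneracy required to apply \cite[Thm.~1]{Pituk2002} and \cite[Thm.~1.1]{Friedland2006} to the products $\Phi_{n,0}$ from \eqref{eq:transfer}.

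For the forward direction ($n \to +\infty$) I would apply \cite[Thm.~1]{Pituk2002} to conclude that every solution $v_n$ of $v_{n+1} = A_n v_n$ which is not eventually zero has a Lyapunov exponent $\lim_{n\to+\infty}\sqrt[n]{\norm{v_n}}$ equal to one of the moduli $\lambda^-$ or $\lambda^+$. Then \cite[Thm.~1.1]{Friedland2006} supplies a solution realizing the smaller value $\lambda^-$; its initial vector is the sought $v_0^-$, and the corresponding $v_n^-$ decays like $(\lambda^-)^n$ since $\lambda^- < 1$.

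To obtain uniqueness up to a constant, I would note that the initial vectors whose solutions carry the exponent $\lambda^-$ form, together with the origin, a linear subspace $S^- \subseteq \R^2$. Existence gives $\dim S^- \ge 1$. Were $\dim S^- = 2$, both columns of the fundamental matrix $\Phi_{n,0}$ would decay, forcing $\det\Phi_{n,0} \to 0$; but $\det\Phi_{n,0} = \prod_{j=0}^{n-1}\det A_j = 1$, a contradiction. Hence $\dim S^- = 1$, which is the claimed uniqueness. For the backward direction ($n \to -\infty$) I would pass to the reversed system $w_m \coloneqq v_{-m}$, satisfying $w_{m+1} = A_{-m-1}^{-1} w_m$ with $A_{-m-1}^{-1} \to A^{-1}$ by Lemma~\ref{lem:propAj}. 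Since $A$ has reciprocal eigenvalues, $A^{-1}$ again has spectrum $\{\lambda^-, \lambda^+\}$ with smallest modulus $\lambda^-$, so the identical argument yields a unique (up to scalar) solution decaying at per-step rate $\lambda^-$; undoing $n = -m$ produces $v_n^+$ and the limit \eqref{eq:decay}, the root for $n \to -\infty$ being read in terms of $|n|$.

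The main obstacle I anticipate is not the spectral bookkeeping but the careful matching of hypotheses to the cited theorems: confirming that the convergence $A_j \to A$ together with the simple, distinct-modulus spectrum of $A$ meets the assumptions of \cite{Pituk2002} and \cite{Friedland2006}, and using the normalization $\det A_j = 1$ to pin the decaying subspace to exactly one dimension in each direction, so that $v_0^+$ and $v_0^-$ are genuinely well defined and unique up to scaling.
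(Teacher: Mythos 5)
Your scaffolding is essentially the paper's (Poincar\'e--Perron theory applied to the products $\Phi_{n,0}$ of \eqref{eq:transfer}), and your uniqueness argument is actually a clean alternative to the paper's: since every $A_j$ is invertible, the initial vectors whose solutions carry the exponent $\lambda^-$ form, with the origin, a subspace $S^-$, and $\det\Phi_{n,0}=\prod_j\det A_j=1$ rules out $\dim S^-=2$; the paper instead deduces uniqueness from the uniqueness of the left vector $w$ in Friedland's rank-one limit. The genuine gap is in the \emph{existence} half, which is the heart of the lemma. Pituk's theorem \cite{Pituk2002} gives only the dichotomy: every solution that is not eventually zero has exponent $\lambda^-$ or $\lambda^+$. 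It does not guarantee that $\lambda^-$ is attained, and your determinant identity does not force $\dim S^-\ge1$ either: two solutions can both grow like $(\lambda^+)^n$ while becoming asymptotically parallel, keeping their parallelogram area equal to $1$. Friedland's theorem \cite{Friedland2006} does not ``supply a solution realizing the smaller value''; what it supplies, for products of \emph{positive} matrices converging to a positive limit, is the rank-one limit $\Phi_{n,0}/\norm{\Phi_{n,0}}\to vw^\tp$. The missing construction --- and the key step in the paper's proof --- is to choose $v_0$ with $w^\tp v_0=0$, so that $\norm{v_n}=o(\norm{\Phi_{n,0}})$, and then to combine the monotonicity $\norm{\Phi_{n,0}}\nearrow$ (from $A_n\ge I$ entrywise) with Pituk's dichotomy to exclude the exponent $\lambda^+$. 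As written, your existence claim is an assertion, not a proof.

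The backward direction cannot simply ``run the identical argument'' on the reversed system $w_{m+1}=A_{-m-1}^{-1}w_m$: Friedland's result is a positive-matrix theorem, and $(A_j)^{-1}$ has entries of both signs, so the products of inverses are not covered by it as they stand. The paper's remedy is the conjugation by $\diag(1,-1)$ (the change of variable $\gamma\mapsto-\gamma$), which turns $(A_j)^{-1}$ into the positive matrix $B_j$ of \eqref{eq:backw_g}; you need this, or a substitute, before invoking the same machinery. You should also note that the $A_j$ are only nonnegative --- $A_j=I$ whenever $a_j=0$ --- so the finitely many identity factors must be discarded before the positivity hypothesis of the cited theorem is met. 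Your observation that $A^{-1}$ has the same spectrum $\{\lambda^-,\lambda^+\}$ by $\lambda^+\lambda^-=\det A=1$ is correct and is all the spectral bookkeeping required once these two points are repaired.
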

\begin{proof}[Proof of Lemma \ref{lem:Lyapunov}]
  We begin with the case of increasing $n$, and we want to apply \cite[Thm.\ 1.2]{Friedland2006} which states that for sequences of positive matrices $\{A_n\}$ satisfying $\lim_{n\to+\infty}A_n = A$ for some positive matrix $A$ we have
  \begin{equation}\label{eq:prodlim}
    \lim\limits_{n\to+\infty}\frac{A_{n} A_{n-1} \dots A_1 A_0}{\norm{A_n A_{n-1} \dots A_1 A_0}} = v w^\tp
  \end{equation}
  for some vectors $v$ and $w$ with positive entries such that $A v = \spr(A) v$.
  As mentioned in \cite[Rem.\ 4]{Borcea2011}, there is in general no easy way of determining the vector $w$ explicitly.
  
  We recall that our $A_n$ has positive entries, unless $a_n = 0$ in which case we have $A_n = I$.
  Because of \eqref{eq:matlim}, there can only be finitely many $n \ge 0$ for which $A_n$ reduces to the identity.
  If we instead consider the sequence of positive matrices consisting of our $\{A_n\}$ where we have omitted the finitely many identity matrices, they clearly still satisfy \eqref{eq:matlim} and so \eqref{eq:prodlim} holds with $\spr(A) = \lambda^+$ and $v = r^+$ from \eqref{eq:eigAj} and \eqref{eq:diagAj}.
  However, as the matrices we omitted were identities, it is clear that the limit in \eqref{eq:prodlim} for both sequences coincide.
  Hence, \cite[Thm.\ 1.1]{Friedland2006} holds for our nonnegative sequence as well.
  
  Now, as $A_n \ge I$ entrywise it follows that the entries of $\Phi_{n,0}$ are nondecreasing for $n \ge 0$, which means that $\norms{\Phi_{n,0}}$ is also nondecreasing for such $n$.
  Therefore, by \eqref{eq:prodlim} we have that any initial vector $v_0$ such that $w^\tp v_0 \neq 0$ leads to a solution $v_n$ with nondecreasing norm, and which then by \cite[Thm.\ 1]{Pituk2002} must satisfy
  \begin{equation}\label{eq:Lyapunov}
    \varrho = \lim_{n\to+\infty} \sqrt[n]{\norm{v_n}}
  \end{equation}
  with $\varrho = \lambda^+ > 1$, i.e., an asymptotically exponentially increasing solution.
  Indeed, the nondecreasing norm rules out the possibility of $v_n = 0$ for $n$ large enough.
  It follows that \eqref{eq:Lyapunov} holds for $\varrho$ equal to either $\lambda^+$ or $\lambda^-$, but if it were $\lambda^- < 1$, then $\norm{v_n}$ could not be nondecreasing.
  However, choosing instead a nonzero $v_0$ such that $w^\tp v_0 = 0$, we obtain an asymptotically exponentially decreasing solution $v_n$ satisfying \eqref{eq:Lyapunov} with $\varrho = \lambda^- < 1$.
  This follows by once more excluding the scenario of $v_n = 0$ for large enough $n$, since $v_0$ is nonzero and each $A_n$ has full rank.
  Then the only remaining possibility is $v_n$ satisfying \eqref{eq:Lyapunov} with $\varrho = \lambda^-$.
  An obvious choice of $v_0$ given $w = [w_1 \enspace w_2 ]^\tp$ is then $v_0 = [w_2 \enspace {-w_1}]^\tp$.
  
  For decreasing $n$, we will be able to reuse the arguments from above.
  From \eqref{eq:transfer} we find that $\Phi_{n,0}$ is a product of inverses of $A_n$ for $n < 0$, and by \eqref{eq:forw_g} we have
  \begin{equation*}
    \begin{bmatrix}
      g_j \\
      \gamma_{j-1}
    \end{bmatrix}
    = (A_j)^{-1}
    \begin{bmatrix}
      g_{j+1} \\
      \gamma_j
    \end{bmatrix} = \begin{bmatrix} 1 & -a_j \dxi \\ -a_j \dxi & 1 + (a_j \dxi)^2 \end{bmatrix} \begin{bmatrix}
      g_{j+1} \\
      \gamma_j
    \end{bmatrix}.
  \end{equation*}
  Since $(A_n)^{-1}$ contains entries of opposite sign, it would appear that we may not be able to use our previous argument. However, a change of variables will do the trick for us.
  First recall \eqref{eq:defgamma} which shows that $\gamma_{j}$ corresponds to a rescaled forward difference for $g_j$, hence its sign indicates whether $g$ is increasing or decreasing at index $j$.
  For an increasing solution in the direction of increasing $n$ it is then necessary for $g_n$ and $\gamma_{n-1}$ to share the same sign as $n \to +\infty$.
  On the other hand, for an increasing solution in the direction of decreasing $n$, the forward difference for $\gamma_{n-1}$ should have the opposite sign of $g_n$ as $n \to -\infty$.
  Therefore, a change of variables allows us to rewrite the previous equations as
  \begin{equation}\label{eq:backw_g}
    \begin{bmatrix}
      g_{j} \\ -\gamma_{j-1}
    \end{bmatrix} = \begin{bmatrix}
      1 & a_j \dxi \\ a_j \dxi & 1 + (a_j \dxi)^2
    \end{bmatrix} \begin{bmatrix}
      g_{j+1} \\ -\gamma_{j}
    \end{bmatrix} \eqqcolon B_j \begin{bmatrix}
      g_{j+1} \\ -\gamma_{j}
    \end{bmatrix},
  \end{equation}
  and
  \begin{equation*}
    \begin{bmatrix}
      g_{n} \\ -\gamma_{n-1}
    \end{bmatrix} = B_n \dots B_{-1} \begin{bmatrix}
      g_{0} \\ -\gamma_{-1}
    \end{bmatrix}, \quad n < 0
  \end{equation*}
  and for this system we may use the positive matrix technique from before.
  The eigenvalues of $B_j$ in \eqref{eq:backw_g} are the same as those of $A_j$, but they switch positions in the corresponding eigenvectors $\tilde{r}^\pm_j$ compared to $r^\pm_j$ of $A_j$:
  \begin{equation*}
    \tilde{r}^\pm_j =
    \begin{bmatrix}
      \ds \frac{1}{\sqrt{1+\lambda^\pm_j}} \\
      \ds\pm\frac{1}{\sqrt{1+\lambda^\mp_j}}
    \end{bmatrix},
    \qquad r^\pm_j =
    \begin{bmatrix}
      \ds \frac{1}{\sqrt{1+\lambda^\mp_j}} \\
      \ds\pm\frac{1}{\sqrt{1+\lambda^\pm_j}}
    \end{bmatrix}.
  \end{equation*}
  The same argument as in the case of increasing $n$ then proves the existence of $v_0$ giving exponentially decreasing solutions as $n \to -\infty$.
  
  The uniqueness follows from the uniqueness of limits in \eqref{eq:prodlim}, which for a given eigenvector $v$ of $A$ means that $w$ is unique up to a constant factor.
  But then again, since we are in $\R^2$, the vector orthogonal to $w$ is unique up to a constant factor.
\end{proof}

\begin{rem}[Signs of the initial vectors]\label{rem:vec_sgn}
  Here we underline that the form of $\Phi_{n,0}$ implies that the entries of $v^\pm_0$ in Lemma \ref{lem:Lyapunov} must be nonzero, with opposite signs for $v^-_0$ and same sign for $v^+_0$.
  Indeed, by \eqref{eq:diff_sys} and \eqref{eq:decay} we have
  \begin{equation*}
    \lim\limits_{n \to+\infty}\norm{(\Phi_{n,0})v_0^-} = 0.
  \end{equation*}
  Let us then assume $v_0^- \neq 0$ with nonnegative entries of the same sign, namely $v_0^- \ge 0 \: (v_0^- \le 0)$ understood entrywise.
  From the definition \eqref{eq:transfer} and $A_n \ge I$, it is clear that $(\Phi_{n,0})v_0^- \ge v_0^- \: ((\Phi_{n,0}) v_0^- \le v_0^- )$ for $n \ge 0$, and so it is impossible for the norm to tend to zero.
  Hence, the entries of $v_0^-$ must be nonzero and of opposite sign.
  For $n \to -\infty$, we can use \eqref{eq:backw_g} and the same argument to arrive at the same conclusion for $[g_0 \enspace {-\gamma_{-1}}]^\tp$, implying that $v^+_0 = [g_0 \enspace \gamma_{-1}]^\tp$ has nonzero entries of equal sign.
\end{rem}

\begin{thm}[Existence of a discrete Green's function]
  \label{thm:green_g}
  Assume $\{a_j\}_{j\in\Z}$ to be a nonnegative sequence such that $a_j = 1 + \D_+b_j$ with $\D_+b
  \in \ltwo$. Then, for any given index $i$, there exists a unique sequence
  $g_i = \{g_{i,j}\}_{j\in\Z}$ such that
  \begin{equation}
    (\A{a}g_i)_j = \frac{\delta_{i,j}}{\dxi}.
    \label{eq:fml_g}
  \end{equation}
\end{thm}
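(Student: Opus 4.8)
The plan is to build $g_i$ by gluing together two one-sided decaying solutions of the homogeneous recurrence, one on each side of the source index $i$, and to fix the two gluing constants by the jump that the right-hand side $\delta_{i,j}/\dxi$ imposes at $j=i$. Away from $j=i$ the equation $(\A{a}g_i)_j = \delta_{i,j}/\dxi$ is homogeneous, so on $\{j\le i\}$ and on $\{j\ge i\}$ any solution must solve the homogeneous system \eqref{eq:forw_g}. Lemma \ref{lem:Lyapunov} supplies exactly what is needed: a homogeneous solution vector $v^L_j = [\,h^L_j,\ \gamma^L_{j-1}\,]^\tp$ decaying exponentially as $j\to-\infty$ and another $v^R_j = [\,h^R_j,\ \gamma^R_{j-1}\,]^\tp$ decaying exponentially as $j\to+\infty$, each unique up to a scalar factor.

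First I would translate the delta source into a modification of the transfer step at $j=i$. In the $(g,\gamma)$ form \eqref{eq:hom_g}, the first row is merely the definition $\gamma_j = \D_+ g_{i,j}/a_j$, while the second row reads $a_j g_{i,j} - \D_-\gamma_j = \delta_{i,j}/\dxi$; this is homogeneous for $j\ne i$, and at $j=i$ it produces the single extra term $\gamma_i = \gamma_{i-1} + a_i\dxi\, g_{i,i} - 1$. Carrying this through \eqref{eq:forw_g} shows that the transfer across the source is the affine map $v_{i+1} = A_i v_i - [\,a_i\dxi,\ 1\,]^\tp$, which remains well defined even when $a_i=0$. I would then seek $g_i$ of the form $v_j = \alpha v^L_j$ for $j\le i$ and $v_j = \beta v^R_j$ for $j\ge i+1$. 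For every $j\ne i$ the homogeneous recurrence then holds automatically, and the only remaining requirement is that the two pieces connect through the modified step, that is, using $v^L_{i+1}=A_i v^L_i$,
\begin{equation*}
  \alpha v^L_{i+1} - \beta v^R_{i+1} = \begin{bmatrix} a_i\dxi \\ 1 \end{bmatrix}.
\end{equation*}
This is a $2\times2$ linear system for $(\alpha,\beta)$ whose coefficient matrix has columns $v^L_{i+1}$ and $-v^R_{i+1}$.

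The crux is therefore to show that $v^L_{i+1}$ and $v^R_{i+1}$ are linearly independent, so that the system is uniquely solvable. Since $\det A_j = 1$ by Lemma \ref{lem:propAj}, the Casoratian $\det[\,v^L_j\mid v^R_j\,]$ is independent of $j$, so it suffices to exclude proportionality of $h^L$ and $h^R$. Were they proportional, there would be a nonzero homogeneous solution $w$ decaying exponentially at both ends, hence $w\in\ltwo$, and the symmetry identity following \eqref{eq:defAop} (with no boundary terms, thanks to the decay) would give
\begin{equation*}
  0 = \ipl{w}{\A{a}w} = \sumZxi{j}{\Big(a_j w_j^2 + \tfrac{1}{a_j}(\D_+ w_j)^2\Big)} = \sumZxi{j}{a_j\big(w_j^2 + \gamma_j^2\big)},
\end{equation*}
where the last equality uses $\D_+ w_j = a_j\gamma_j$, which also renders the flux term meaningful at the finitely many indices with $a_j=0$. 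As $\D_+b\in\ltwo$ forces $a_j\to1$, we have $a_j>0$ for all but finitely many $j$, so this identity forces $w_j=\gamma_j=0$ outside a finite set; the invertibility of the transfer matrices then propagates $w\equiv0$, a contradiction. Hence the Casoratian is nonzero, $(\alpha,\beta)$ is uniquely determined, and the resulting $g_i$ inherits exponential decay at both ends from $h^L$ and $h^R$, so in particular $g_i\in\ltwo$. The same positive-definiteness argument yields uniqueness: the difference of two decaying solutions of \eqref{eq:fml_g} is a both-ends-decaying homogeneous solution, hence vanishes. I expect the nonvanishing of the Casoratian to be the main obstacle, precisely because the degenerate indices $a_i=0$ must be accommodated; the $(g,\gamma)$ reformulation and the identity above handle them cleanly, since the quadratic form only sees the terms with $a_j>0$.
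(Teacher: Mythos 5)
Your proposal is correct, and its skeleton coincides with the paper's: both take the two one-sided exponentially decaying homogeneous solutions supplied by Lemma \ref{lem:Lyapunov}, glue them across the source index $i$, and reduce solvability to the non-vanishing of a $j$-independent Wronskian/Casoratian (constant because $\det A_j=1$). The packaging differs only superficially — the paper writes the explicit product formula $g_{i,j}=g^{\mp}_j g^{\pm}_i/W$ and verifies the jump at $j=i$ directly, whereas you encode the delta as an affine perturbation of the transfer step and solve a $2\times2$ system for the two gluing constants; these are equivalent. The genuine divergence is in the key non-degeneracy step. The paper proves $W\neq 0$ combinatorially, via the sign structure of the initial vectors $v_0^{\pm}$ (Remark \ref{rem:vec_sgn}: opposite signs for $v_0^-$, equal signs for $v_0^+$, so the two terms of $W$ add rather than cancel). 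You instead rule out a nontrivial two-sided decaying homogeneous solution by the energy identity $0=\ipl{w}{\A{a}w}=\sumZxi{j}{a_j(w_j^2+\gamma_j^2)}$, correctly reformulated through $\D_+w_j=a_j\gamma_j$ so that the indices with $a_j=0$ cause no trouble, and then propagate $w\equiv 0$ from the tail where $a_j>0$ using invertibility of the $A_j$. This buys you two things the paper does not get from its argument: it is insensitive to the sign bookkeeping of Remark \ref{rem:vec_sgn}, and it immediately yields uniqueness in the class of decaying solutions (the difference of two such solutions is a two-sided decaying homogeneous solution, hence zero), which is more substantive than the paper's appeal to the uniqueness-up-to-scalar of $v_0^{\pm}$. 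The trade-off is that the paper's sign analysis is reused later (e.g.\ in Lemma \ref{lem:gk_sgn}) to establish positivity and monotonicity of the kernels, which your energy argument does not provide by itself.
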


\begin{proof}
  Our strategy follows the standard approach for constructing Green's functions: We first construct solutions of the homogeneous
  version of \eqref{eq:fml_g} with exponential decay, and then we combine them in order to obtain a delta
  function at a given point. We start by constructing $g_{0,j}$ centered at $i=0$.
  
  Choosing $v^\pm_0$ from Lemma \ref{lem:Lyapunov} we set
  \begin{equation}\label{eq:initvecs}
    \begin{bmatrix}
      g^-_0 \\ \gamma^-_{-1}
    \end{bmatrix} \coloneqq v_0^-, \qquad
    \begin{bmatrix}
      g^+_0 \\ \gamma^+_{-1}
    \end{bmatrix} \coloneqq  v_0^+,
  \end{equation}
  and define the sequences
  \begin{equation}\label{eq:ggamma_pm}
    \begin{bmatrix}
      g^-_n \\
      \gamma^-_{n-1}
    \end{bmatrix}
    \coloneqq
    \Phi_{n,0} \begin{bmatrix}
      g^-_0 \\ \gamma^-_{-1}
    \end{bmatrix},
    \quad
    \begin{bmatrix}
      g^+_n \\
      \gamma^+_{n-1}
    \end{bmatrix} \coloneqq \Phi_{n,0} \begin{bmatrix}
      g^+_0 \\ \gamma^+_{-1}
    \end{bmatrix}, \quad n \in \Z,
  \end{equation}
  where by construction $g^\pm, \gamma^\pm$ have exponential decay for $n \to \mp\infty$.
  Then, applying the operator $\A{a}$ to $g^\pm$ we find
  \begin{equation*}
    (\A{a}g^\pm)_j = a_j g^\pm_j -\D_-\gamma^\pm_j = 0, \qquad j \in \Z
  \end{equation*}
  by construction of $g^\pm$ and $\gamma^\pm$.
  Let us then define
  \begin{equation}
    \label{eq:g_proto}
    g_{0,j} \coloneqq C \begin{cases}
      g^-_j g^+_0, & j \ge 0, \\ g^+_j g^-_0, & j < 0,
    \end{cases} \quad
    \gamma_{0,j} \coloneqq C \begin{cases}
      \gamma^-_j g^+_0, & j \ge 0, \\ \gamma^+_j g^-_0, & j < 0
    \end{cases}
  \end{equation}
  for some hitherto unspecified constant $C$, and observe from the homogeneous equation that $a_j g_{0,j} - \D_-\gamma_{0,j} = 0$
  for $j \neq 0$.  Moreover, we have $\D_+g_{0,j} = a_j \gamma_{0,j}$ for all $j$ by construction.  Now we would like to show that
  the constant $C$ can be chosen to obtain $\A{a}g_{0,0} = 1/\dxi$.
  From \eqref{eq:sum_by_parts}, we get
  \begin{equation}
    \dxi \sum_{j=m}^{n}g^+_j(\A{a}g^-)_j - \dxi \sum_{j=m}^{n}(\A{a}g^+)_j g^-_j = W_n(g^-,g^+) - W_{m-1}(g^-,g^+),
    \label{eq:GreenFormula}
  \end{equation}
  where we in the spirit of \cite[Eq.\ (1.21)]{JacobiOperator} have defined a discrete Wronskian
  \begin{equation}
    W_n(g^-,g^+) \coloneqq g^-_{n+1} \gamma^+_n - g^+_{n+1} \gamma^-_n = g^-_{n} \gamma^+_n - g^+_{n} \gamma^-_n,
    \label{eq:wron}
  \end{equation}
  and the last equality follows from the identity $g^\pm_{n+1} = g^\pm_n + \dxi a_n \gamma^\pm_n$.
  Since the left-hand side of \eqref{eq:GreenFormula} vanishes by definition of $g^\pm$, we have $W_n(g^-,g^+) = W_{m-1}(g^-,g^+)$ for any $n, m \in \Z$.
  That is, the Wronskian $W_n(g^-,g^+)$ is a constant $W(g^-,g^+)$ for the constructed sequences $g^+$ and $g^-$.
  
  Next, we want to show that the Wronskian is nonzero.
  Considering
  \begin{equation*}
    W(g^-,g^+) = W_{-1}(g^-,g^+) = g^-_{0} \gamma^+_{-1} - g^+_{0}\gamma^-_{-1} = g^+_{0} \gamma^-_{-1} + g^-_{0} (-\gamma^+_{-1})
  \end{equation*}
  and the definition \eqref{eq:initvecs}, we use the sign properties stated in Remark \ref{rem:vec_sgn}  to conclude that the two terms in the final sum are always nonzero and of the same sign, implying $W(g^-,g^+) \neq 0$.
  Finally, we will determine the constant $C$ by considering the backward difference
  \begin{align*}
    \D_{j-}\gamma_{0,0} &= C \frac{\gamma^-_0 g^+_0 - \gamma^+_{-1}g^-_0}{\dxi} = C \frac{\gamma^-_0 g^+_0 - \gamma^-_{-1} g^+_0 + \gamma^-_{-1} g^+_0 - \gamma^+_{-1} g^-_0 }{\dxi} \\
                        &= C g^+_0 a_0 g^-_0 - C \frac{W_{-1}(g^-,g^+)}{\dxi} = a_0 g_{0,0} - C \frac{W(g^-,g^+)}{\dxi},
  \end{align*}
  which leads to
  \begin{equation*}
    (\A{a}g_0)_0 = a_0 g_{0,0} - \D_-\gamma_{0,0} = C \frac{W(g^-,g^+)}{\dxi}.
  \end{equation*}
  Consequently, setting $C^{-1} = W(g^-,g^+)$ in \eqref{eq:g_proto} gives the desired Green's function.
  
  Note that there is nothing special about the index $i=0$ where we centered the Green's function.
  We can simply use the sequences \eqref{eq:ggamma_pm} from before and define
  \begin{equation}
    \label{eq:g}
    g_{i,j} = \frac{1}{W(g^-,g^+)} \begin{cases}
      g^+_j g^-_i, & j \ge i, \\ g^-_j g^+_i, & j < i,
    \end{cases} \qquad
    \gamma_{i,j} = \frac{1}{W(g^-,g^+)} \begin{cases}
      \gamma^+_j g^-_i, & j \ge i, \\ \gamma^-_j g^+_i, & j < i
    \end{cases}
  \end{equation}
  to obtain a Green's function $g_{i,j}$ centered at an arbitrary $i$.
  
  The uniqueness of $g_{i,j}$ follows from the vectors $v_0^\pm$ in Lemma \ref{lem:Lyapunov} being uniquely defined up to constant factors.
  Indeed, when constructing the Green's function in \eqref{eq:g} these factors disappear since we are dividing by the Wronskian $W(g^-,g^+)$, and so we have no degrees of freedom left in our construction of $g_{i,j}$, hence it is unique.
\end{proof}

Note that $\A{a}$ is not the only way to discretize the operator
\begin{equation*}
  a(\xi) \id - \pdiff{\xi}{} \frac{1}{a(\xi)} \pdiff{\xi}{}
\end{equation*}
with first order differences, we may also consider
\begin{equation}
  \label{eq:defBop}
  (\B{a}k)_j \coloneqq a_j k_j - \D_{+}\left(\frac{\D_{-}k_j}{a_j}\right).
\end{equation}
In fact, we will need the Green's function for this operator as well to close our upcoming system of differential equations.
Fortunately, the existence of Green's function for \eqref{eq:defBop} follows from the considerations already made in Theorem
\ref{thm:green_g}.
\begin{cor}\label{cor:green_k}
  Under the same assumptions on $\{a_j\}_{\j \in \Z}$ as in Theorem \ref{thm:green_g}, for any given index $i$ there exists a unique sequence $k_i = \{k_{i,j}\}_{\jmath\in\Z}$ such that
  \begin{equation}
    (\B{a}k_i)_j = \frac{\delta_{i,j}}{\dxi}.
    \label{eq:fml_k}
  \end{equation}
\end{cor}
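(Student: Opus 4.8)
The plan is to exploit a reflection symmetry that turns $\B{a}$ into an instance of the operator $\A{\cdot}$ already treated in Theorem~\ref{thm:green_g}. Let $\Rcal$ denote the index reflection $(\Rcal w)_j \coloneqq w_{-j}$, which is a norm-preserving involution on $\ltwo$. A direct computation with the difference operators shows $\D_+\Rcal = -\Rcal\D_-$ and $\D_-\Rcal = -\Rcal\D_+$, and since reflection commutes with pointwise products and quotients, these identities combine to give
\begin{equation*}
  \B{a} = \Rcal\,\A{\Rcal a}\,\Rcal,\qquad (\Rcal a)_j = a_{-j}.
\end{equation*}
Thus the Green's function for $\B{a}$ can be read off from that of $\A{\Rcal a}$, provided the reflected coefficient sequence $\Rcal a$ still satisfies the hypotheses of Theorem~\ref{thm:green_g}.

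First I would check this hypothesis. Nonnegativity of $\Rcal a$ is immediate from $a \ge 0$. Writing $a_j = 1 + \D_+b_j$, we have $(\Rcal a)_j - 1 = (\D_+ b)_{-j} = \D_+ c_j$ with $c_j \coloneqq -b_{1-j}$, and reflection gives $\normltwo{\D_+ c} = \normltwo{\D_+ b} < \infty$. Hence $\Rcal a = 1 + \D_+ c$ with $\D_+ c \in \ltwo$, so Theorem~\ref{thm:green_g} applies to $\A{\Rcal a}$ and yields, for each index $i$, a unique exponentially decaying sequence $\tilde g_i = \{\tilde g_{i,j}\}_{j\in\Z}$ with $(\A{\Rcal a}\tilde g_i)_j = \delta_{i,j}/\dxi$. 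I would then define $k_{i,j} \coloneqq \tilde g_{-i,-j}$. Using $\B{a} = \Rcal\A{\Rcal a}\Rcal$ together with $\Rcal k_i = \tilde g_{-i}$ and the reflection invariance $\delta_{-i,-j} = \delta_{i,j}$ of the Kronecker delta, one computes
\begin{equation*}
  (\B{a}k_i)_j = (\A{\Rcal a}\,\Rcal k_i)_{-j} = (\A{\Rcal a}\,\tilde g_{-i})_{-j} = \frac{\delta_{-i,-j}}{\dxi} = \frac{\delta_{i,j}}{\dxi},
\end{equation*}
so $k_i$ is the desired Green's function, and it inherits the exponential decay of $\tilde g_{-i}$.

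Finally, uniqueness transfers through the reflection. Since $\Rcal$ is an involution, the conjugation identity also reads $\A{\Rcal a} = \Rcal\,\B{a}\,\Rcal$, so any solution $k_i'$ of \eqref{eq:fml_k} produces the sequence $\Rcal k_i'$ satisfying $(\A{\Rcal a}\,\Rcal k_i')_m = (\B{a}k_i')_{-m} = \delta_{-i,m}/\dxi$; that is, $\Rcal k_i'$ is a Green's function for $\A{\Rcal a}$ centered at $-i$, which by Theorem~\ref{thm:green_g} is unique. Hence $\Rcal k_i' = \tilde g_{-i} = \Rcal k_i$, and $k_i$ is unique. The only real work is the bookkeeping in establishing the conjugation identity $\B{a} = \Rcal\A{\Rcal a}\Rcal$ — keeping track of how $\D_+$ and $\D_-$ trade places (with a sign) under reflection — and verifying that the reflected coefficients retain the structural form $1 + \D_+(\cdot)$ with square-summable difference required by Theorem~\ref{thm:green_g}; both are routine once the symmetry is identified.
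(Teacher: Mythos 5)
Your proof is correct, and it takes a genuinely different route from the paper. You reduce $\B{a}$ to the already-solved operator via the reflection conjugation $\B{a}=\Rcal\,\A{\Rcal a}\,\Rcal$; the sign bookkeeping $\D_\pm\Rcal=-\Rcal\D_\mp$ checks out, the reflected coefficients $\Rcal a = 1+\D_+c$ with $c_j=-b_{1-j}$ do satisfy the hypotheses of Theorem \ref{thm:green_g}, and uniqueness transfers because reflection preserves the class of decaying sequences in which the theorem's uniqueness holds. The paper instead works with the \emph{unreflected} coefficient sequence $a$ throughout: it introduces $\kappa_j=\D_-k_j/a_j$, observes that the homogeneous recursion for $(\kappa,k)$ is governed by the very same transfer matrices $A_j$, $B_j$ as for $(g,\gamma)$, and therefore reuses the decaying solutions $g^\pm,\gamma^\pm$ already built in Theorem \ref{thm:green_g} to write $k_{i,j}$ and $\kappa_{i,j}$ explicitly in terms of $\gamma^\pm_j$, $g^\pm_j$ and the same Wronskian $W(g^-,g^+)$, as in \eqref{eq:k}. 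What your argument buys is brevity and a clean conceptual reason why the corollary is ``for free''; what the paper's construction buys is the explicit coupling of $k,\kappa$ to $g,\gamma$ for the \emph{same} $a$, which immediately yields the relations $k_{i,j}=k_{j,i}$ and $\kappa_{i,j}=-\gamma_{j,i}$ of Remark \ref{rem:greens} that are used repeatedly in Sections \ref{sec:analysis} and \ref{sec:existence}. Your construction, going through $\A{\Rcal a}$, would require extra work to recover those cross-identities, but for the statement of the corollary as written it is complete.
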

\begin{proof}[Proof of Corollary \ref{cor:green_k}]
  Manipulating the homogeneous version of \eqref{eq:fml_k} we find it to be equivalent to
  \begin{equation*}
    \frac{\D_-k_{j+1}}{a_{j+1}} = \dxi a_j k_j + \frac{\D_-k_{j}}{a_{j}}.
  \end{equation*}
  Introducing
  \begin{equation}
    \kappa_j = \frac{\D_-k_j}{a_j} = \frac{k_{j}-k_{j-1}}{a_j \dxi},
    \label{eq:defkappa}
  \end{equation}
  the previous equation can be written as
  \begin{equation*}
    \begin{bmatrix}
      \kappa_{j+1} \\
      k_{j}
    \end{bmatrix}
    =
    \begin{bmatrix}
      1 + (a_j \dxi)^2 & a_{j} \dxi \\
      a_{j} \dxi & 1
    \end{bmatrix}
    \begin{bmatrix}
      \kappa_{j} \\
      k_{j-1}
    \end{bmatrix}
    = A_j
    \begin{bmatrix}
      \kappa_{j} \\
      k_{j-1}
    \end{bmatrix},
  \end{equation*}
  where we recognize the matrix $A_j$ from \eqref{eq:forw_g}.
  Going backward we find
  \begin{equation*}
    \begin{bmatrix}
      \kappa_{j} \\
      k_{j-1}
    \end{bmatrix} = \begin{bmatrix}
      1 & -a_{j} \dxi \\
      -a_{j} \dxi & 1 + (a_j \dxi)^2
    \end{bmatrix} \begin{bmatrix}
      \kappa_{j+1} \\
      k_{j}
    \end{bmatrix},
  \end{equation*}
  or equivalently
  \begin{equation*}
    \begin{bmatrix}
      -\kappa_{j} \\
      k_{j-1}
    \end{bmatrix} = \begin{bmatrix}
      1 & a_{j} \dxi \\
      a_{j} \dxi & 1 + (a_j \dxi)^2
    \end{bmatrix} \begin{bmatrix}
      -\kappa_{j+1} \\
      k_{j}
    \end{bmatrix} = B_{j} \begin{bmatrix}
      -\kappa_{j+1} \\
      k_{j}
    \end{bmatrix}
  \end{equation*}
  with $B_{j}$ from \eqref{eq:backw_g}.
  Hence, we get the solution for free from \ref{thm:green_g}.
  Indeed, choosing
  \begin{equation*}
    \begin{bmatrix}
      \kappa^-_{n} \\ k^-_{n-1}
    \end{bmatrix} = \begin{bmatrix}
      g^-_{n} \\ \gamma^-_{n-1}
    \end{bmatrix}, \qquad \begin{bmatrix}
      -\kappa^+_{n} \\ k^+_{n-1}
    \end{bmatrix} = \begin{bmatrix}
      g^+_{n} \\ -\gamma^+_{n-1}
    \end{bmatrix}
  \end{equation*}
  we know that these sequences have the correct decay at infinity.
  Defining
  \begin{align}
    \label{eq:k}
    \begin{aligned}
      k_{i,j} &= \frac{1}{W(g^-,g^+)} \begin{cases}
        k^-_j k^+_i, & j > i, \\ k^+_j k^-_i, & j \le i,
      \end{cases} = \frac{-1}{W(g^-,g^+)} \begin{cases}
        \gamma^-_j \gamma^+_i, & j > i, \\ \gamma^+_j \gamma^-_i, & j \le i,
      \end{cases} \\
      \kappa_{i,j} &= \frac{1}{W(g^-,g^+)} \begin{cases}
        \kappa^-_j k^+_i, & j > i, \\ \kappa^+_j k^-_i, & j \le i,
      \end{cases} = \frac{-1}{W(g^-,g^+)} \begin{cases}
        g^-_j \gamma^+_i, & j > i, \\ g^+_j \gamma^-_i, & j \le i,
      \end{cases}
    \end{aligned}
  \end{align}
  it follows from \eqref{eq:hom_g} that $(\B{a}k_i)_j = a_j k_{i,j} - \D_{j+}\kappa_{i,j} = 0$ for $j \neq i$.
  Moreover, by the constancy of \eqref{eq:wron} we find $(\B{a}k_i)_i = 1/\dxi$ in the same way as for $(\A{a} g_i)_i$.
\end{proof}

\begin{rem} \label{rem:greens}
  Note that we may observe directly from \eqref{eq:g} and \eqref{eq:k} that $g_{i,j} = g_{j,i}$, $k_{i,j} = k_{j,i}$, and $\kappa_{i,j} = -\gamma_{j,i}$.
  Moreover, the eigenvalues
  \begin{equation*}
    \lambda^{\pm} = \frac{1}{2}\left(2 + \dxi^2 \pm \dxi\sqrt{4 + \dxi^2}\right)
  \end{equation*}
  are exactly those found in \eqref{eq:greens_eul} for the operator $\id-\D_{-}\D_{+}$.
  In fact, for $a_j \equiv 1$ the sequences $g$ and $k$ coincide since $\D_-\D_+ = \D_+\D_-$, and their explicit expression \eqref{eq:greens_eul} can be recovered from the columns of $\Lambda^n R^{-1}$ in the diagonalization $A^n = R \Lambda^n R^{-1}$.
\end{rem}

Observe that by \eqref{eq:defgamma} and \eqref{eq:defkappa} we can rewrite \eqref{eq:fml_g}
and \eqref{eq:fml_k} in the compact form
\begin{equation}
  \label{eq:fundrelggkk}
  \begin{bmatrix}
    -\D_{j-} & a_j \\
    a_j & -\D_{j+}
  \end{bmatrix}
  \begin{bmatrix}
    \gamma_{i,j}&k_{i,j}      \\
    g_{i,j} &\kappa_{i,j} 
  \end{bmatrix}
  = \frac{1}{\dxi}
  \begin{bmatrix}
    \delta_{i,j}& 0\\
    0 & \delta_{i,j}
  \end{bmatrix}.
\end{equation}

\begin{lem}[Sign properties of the discrete Green's functions]\label{lem:gk_sgn}
  Assume $a_j \ge 0$ for $j \in \Z$, and let $g$, $\gamma$, $k$, and $\kappa$ be solutions of \eqref{eq:fundrelggkk} which decay to zero for $\abs{j-i} \to +\infty$.
  Then the following sign properties hold,
  \begin{enumerate}[(i)]
  \item
    $g_{i,j} > 0$ and $k_{i,j} > 0$ for $j \in \Z$, \label{lem:gk_sgn:pos}
  \item
    $\sgn(\gamma_{i,j}) = \sgn(i-j-1/2)$ and $\sgn(\kappa_{i,j}) =
    \sgn(i-j+1/2)$. \label{lem:gk_sgn:sgn}
  \end{enumerate}
  In particular, this leads to the monotonicity properties
  \begin{equation}\label{eq:gk_mono}
    \max_{j\in\Z}g_{i,j} = g_{i,i}, \quad \lim_{|j-i|\to+\infty}g_{i,j} \searrow 0, \quad \max_{j\in\Z}k_{i,j} = k_{i,i}, \quad \lim_{|j-i|\to+\infty}k_{i,j} \searrow 0,
  \end{equation}
  where the arrows denote monotone decrease.
\end{lem}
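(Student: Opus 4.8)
The plan is to reduce all four sign statements, together with the monotonicity \eqref{eq:gk_mono}, to three facts about the homogeneous building blocks $g^\pm,\gamma^\pm$ produced in Theorem \ref{thm:green_g} and Corollary \ref{cor:green_k}. After normalizing each $v_0^\pm$ so that $g_0^\pm>0$, these are
\[
  g^\pm_j>0 \ \text{for all } j, \qquad \gamma^-_j<0 \ \text{and}\ \gamma^+_j>0 \ \text{for all } j, \qquad W(g^-,g^+)>0 .
\]
Once these are in hand, (i) and (ii) are read off directly from the representations \eqref{eq:g} and \eqref{eq:k}, using that on the side $j\ge i$ the Green's function is built from the solution $g^-$ decaying at $+\infty$ and on the side $j<i$ from $g^+$ decaying at $-\infty$. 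For instance, for $j\ge i$ one gets $g_{i,j}=g^+_i g^-_j/W>0$ and $\gamma_{i,j}=g^+_i\gamma^-_j/W<0$, while for $j<i$ one gets $g_{i,j}=g^-_i g^+_j/W>0$ and $\gamma_{i,j}=g^-_i\gamma^+_j/W>0$; the boundary index $j=i-1$ is checked by hand and gives $\gamma_{i,i-1}=g^-_i\gamma^+_{i-1}/W>0$. This yields $g_{i,j}>0$ and $\sgn(\gamma_{i,j})=\sgn(i-j-\tfrac12)$. The statements for $k,\kappa$ follow the same way from \eqref{eq:k}; in computing $\kappa_{i,j}$ I would use the homogeneous identity $\D_-\gamma^\pm_j=a_jg^\pm_j$ (the second row of \eqref{eq:hom_g}) to collapse $\D_{j-}k_{i,j}$ into a single term, obtaining $\kappa_{i,j}=-g^-_j\gamma^+_i/W<0$ for $j>i$ and $\kappa_{i,j}=-g^+_j\gamma^-_i/W>0$ for $j\le i$, i.e.\ $\sgn(\kappa_{i,j})=\sgn(i-j+\tfrac12)$. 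The Wronskian sign is immediate from $W=W_{-1}=g^-_0\gamma^+_{-1}-g^+_0\gamma^-_{-1}$ together with the signs of $v_0^\pm$ from Remark \ref{rem:vec_sgn}.

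The real work is establishing the sign profile of $g^\pm,\gamma^\pm$, and the difficulty is that $g^-$ (resp.\ $g^+$) is the \emph{subdominant} solution at $+\infty$ (resp.\ $-\infty$), so signs are not preserved under naive iteration of the transfer matrices. My strategy is to propagate positivity on the growing side with the \emph{positive} transfer matrices, and to use a discrete maximum principle on the decaying side. Concretely, Remark \ref{rem:vec_sgn} gives $v_0^+=[g^+_0,\gamma^+_{-1}]^\tp>0$ entrywise; since each $A_j\ge 0$ entrywise (positive when $a_j>0$, the identity when $a_j=0$ by Lemma \ref{lem:propAj}), the forward recursion \eqref{eq:forw_g} yields $v_j^+>0$ for all $j\ge0$. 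For $j\le0$ I would invoke the maximum principle for $\A{a}$: from $(\A{a}g^+)_j=0$ one sees that at any interior local minimum $j_0$ with $a_{j_0}>0$ the value satisfies $g^+_{j_0}\ge0$, because $a_{j_0}g^+_{j_0}=\D_-(\D_+g^+/a)_{j_0}\ge0$ there; combined with $g^+_0>0$ and $g^+_j\to0$ as $j\to-\infty$ (decay from Lemma \ref{lem:Lyapunov}), this rules out any nonpositive value on $(-\infty,0]$, hence $g^+_j>0$ for all $j$. The symmetric argument for $g^-$ applies the positive matrices $B_j$ of \eqref{eq:backw_g} to $\tilde v_j^-=[g^-_j,-\gamma^-_{j-1}]^\tp$, which is $>0$ at $j=0$ and propagates \emph{downward} to cover $j\le0$, and the maximum principle on $[0,\infty)$ where $g^-\to0$; this gives $g^-_j>0$ for all $j$. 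Finally the sign of $\gamma^\pm$ follows by telescoping $\D_-\gamma^\pm_j=a_jg^\pm_j$ from the decaying end: summing from $-\infty$ gives $\gamma^+_j=\dxi\sum_{l\le j}a_lg^+_l>0$, and summing from $+\infty$ gives $\gamma^-_j=-\dxi\sum_{l>j}a_lg^-_l<0$.

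With (i) and (ii) established, the monotonicity \eqref{eq:gk_mono} is a corollary. By \eqref{eq:defgamma} we have $\D_{j+}g_{i,j}=a_j\gamma_{i,j}$, whose sign equals $\sgn(i-j-\tfrac12)$ wherever $a_j>0$ (and which vanishes where $a_j=0$), so $g_{i,\cdot}$ is nondecreasing for $j<i$ and nonincreasing for $j\ge i$; this forces the maximum at $j=i$ and, with the decay to $0$, the monotone descent $g_{i,j}\searrow0$. The same argument with $\D_{j-}k_{i,j}=a_j\kappa_{i,j}$ and $\sgn(\kappa_{i,j})=\sgn(i-j+\tfrac12)$ gives the corresponding statement for $k$. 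I expect the main obstacle to be precisely this global sign and monotonicity of the homogeneous solutions $g^\pm,\gamma^\pm$: breaking the apparent circularity—positivity of $g^\pm$ seems to require the sign of $\gamma^\pm$ and vice versa—is exactly what the combination of positive-matrix propagation on the growing half-line and the maximum principle on the decaying half-line is meant to resolve, with the finitely many indices where $a_j=0$ (only finitely many, since $a_j\to1$) handled as flat steps that do not affect the conclusions.
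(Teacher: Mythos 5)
Your proposal is correct, but it takes a genuinely different route from the paper. The paper works directly with the decaying Green's function itself: it first pins down the signs of the four central values $g_{i,i}$, $g_{i,i+1}$, $\gamma_{i,i-1}$, $\gamma_{i,i}$ by combining the necessary sign conditions of Remark \ref{rem:vec_sgn} with a short computation based on the jump condition (showing $0> \dxi a_i[(g_{i,i})^2+(\gamma_{i,i})^2]-g_{i,i}$, whence $g_{i,i}>0$), and then propagates the sign pattern $(+,-)$ outward \emph{in the decaying directions} by a contradiction argument: at the first index where the pattern fails, either both components share a sign (impossible, since $A_n\ge I$ entrywise then prevents the norm from decaying) or the signs swap (impossible, since applying $A_k^{-1}$ shows the previous index already failed). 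You instead establish global sign profiles of the homogeneous building blocks $g^\pm,\gamma^\pm$ --- positive-matrix propagation on the growing half-line, a discrete minimum principle for $\A{a}$ on the decaying half-line, and telescoping of $\D_-\gamma^\pm_j=a_jg^\pm_j$ --- and then read off (i), (ii) and the Wronskian sign from the product representation; note that your formulas (with $g^-_j$ appearing on the side $j\ge i$) are the ones consistent with \eqref{eq:g_proto} and with decay, whereas the displayed \eqref{eq:g} has the superscripts $\pm$ transposed. Your route buys a cleaner conceptual picture (all signs reduce to monotone positivity of two scalar solutions) and a reusable maximum principle; the paper's route is more self-contained, since it never needs the product formula or the uniqueness statement of Theorem \ref{thm:green_g}, and it absorbs the indices with $a_j=0$ effortlessly because $A_j=I$ trivially preserves the sign pattern.

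Two small points in your argument deserve to be tightened. First, the minimum-principle step needs $\gamma_{j_0-1}\le 0$ at the minimizer, which is immediate only when $a_{j_0-1}>0$; when $a_{j_0-1}=0$ you must chain through the flat steps ($g_{j}$ and $\gamma_{j-1}$ are constant across them) to the nearest index with $a_l>0$, and the existence of such an index to the left is guaranteed not by the lemma's hypotheses ($a_j\ge 0$ plus decay) but either by the standing assumption $a_j\to 1$ or by the observation that $a\equiv 0$ on a half-line forces the solution to be constant there and hence, by decay, identically zero. Second, the minimum principle only yields $g^+_{j}\ge 0$ on the decaying side; strict positivity requires the extra remark that a zero interior minimum forces $\gamma_{j_0}=\gamma_{j_0-1}=0$ and hence, by invertibility of the transfer matrices, the trivial solution. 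Neither point is a genuine obstruction, but both should be spelled out.
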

In Figure \ref{fig:gk} we have included a sketch of $g_{i,n}$, $\gamma_{i,n}$, $k_{i,n}$, and $\kappa_{i,n}$ for $\dxi = 0.2$, $i = 0,4$ and $a_n = a(n\dxi)$ given by
\begin{equation}\label{eq:a_ex}
  a(\xi) = \begin{cases}
    2, & -1 < \xi \le 0.5, \\ 0, & 0.5 < \xi \le 1, \\ 4, & 1 < \xi \le 1.5 \\ 1, & \text{otherwise}.
  \end{cases}
\end{equation}
We say sketch, as they have been computed on a finite grid $n \in \{-20,\dots,20\}$ with boundary conditions $\gamma_{i,-21} = g_{i,21} = 0$ and $k_{i,-21} = \kappa_{i,21} = 0$, and consequently we find that neither of $g_{i,-20}$, $\gamma_{i,20}$, $\kappa_{i,-20}$ or $k_{i,20}$ are exactly zero.
However, the exponential decay makes them very small and the qualitative behavior indicated in Lemma \ref{lem:gk_sgn} is still the same.
Note how $a(\xi)$ being zero on the interval $(0.5,1]$ leads to constant kernel values in that neighborhood, even at the peaks for the kernels centered at $\xi_4 = 0.8$.

\begin{figure}
  \begin{subfigure}{0.45\textwidth}
    \includegraphics[width=\textwidth]{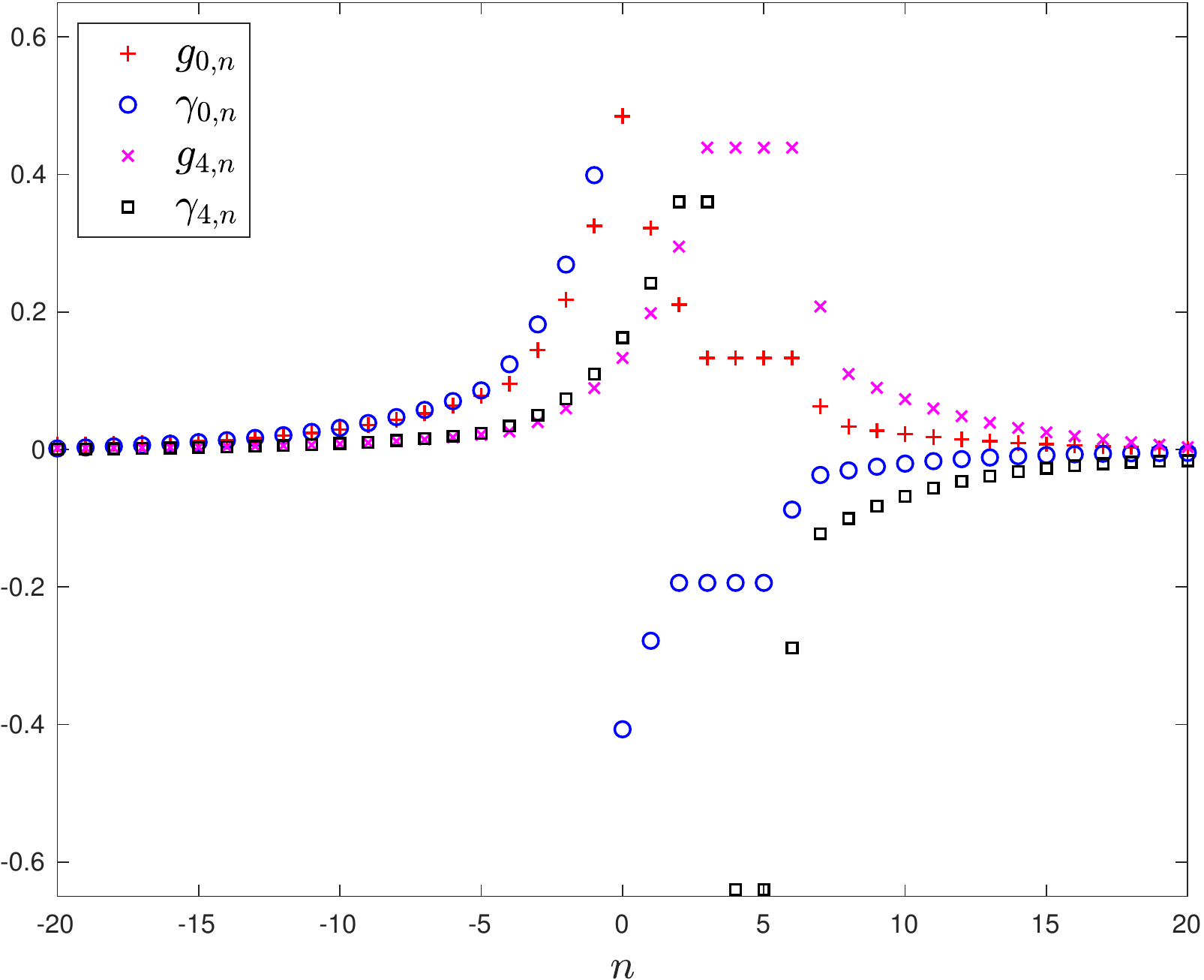}
  \end{subfigure}
  \begin{subfigure}{0.45\textwidth}
    \includegraphics[width=\textwidth]{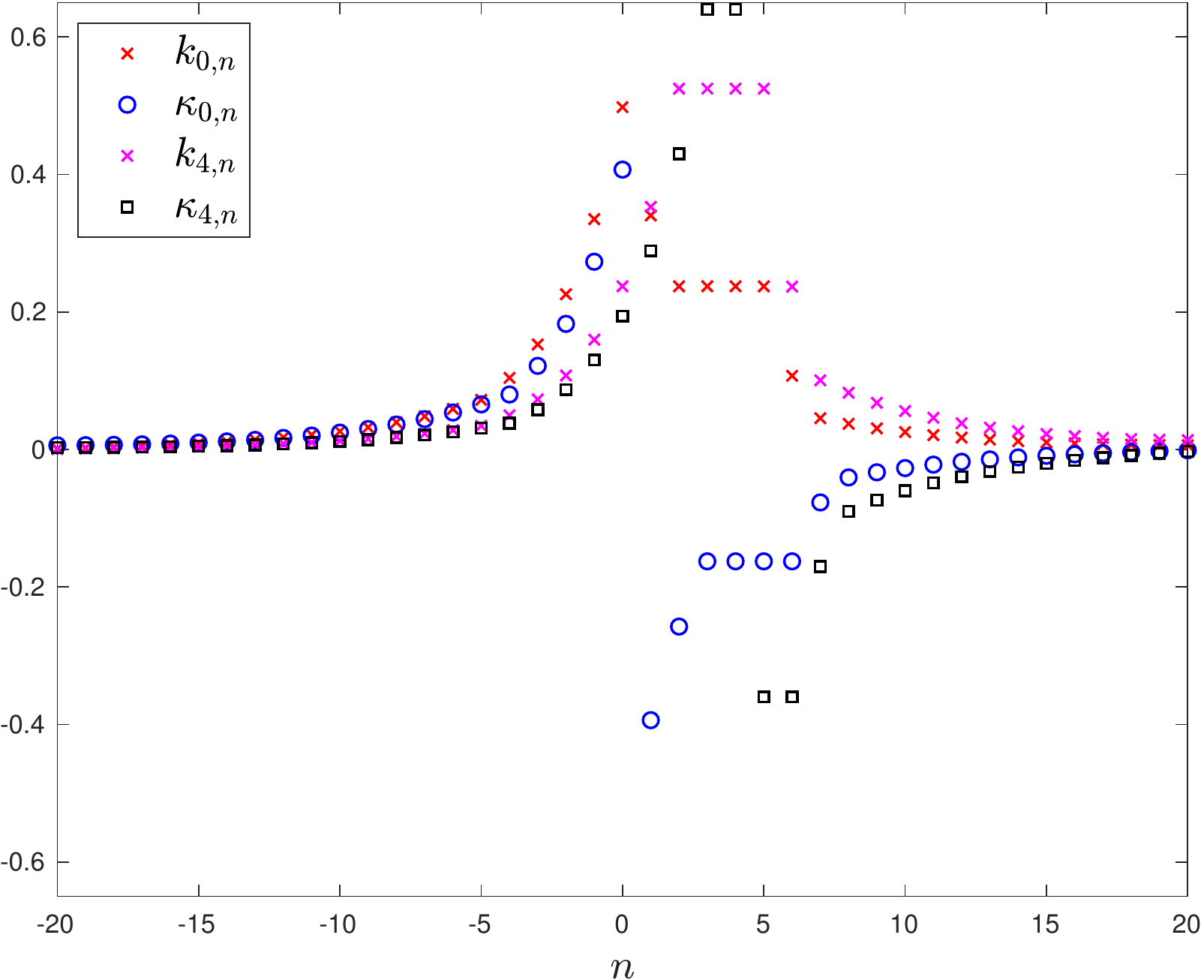}
  \end{subfigure}
  \caption{Sketch of $g_{i,n}$, $\gamma_{i,n}$, $k_{i,n}$, and $\kappa_{i,n}$ for $\dxi = 0.2$, $i = 0,4$ and $a_n = a(n\dxi)$ for $a(\xi)$ defined in \eqref{eq:a_ex}. Note the jump of size $-1 + \Ocal(\dxi)$ at $n = i$ for both $\gamma$ and $\kappa$.}
  \label{fig:gk}
\end{figure}

\begin{proof}[Proof of Lemma \ref{lem:gk_sgn}]
  We prove this only for $g$ and $\gamma$ as the proof for $k$ and $\kappa$ is similar.
  The proof relies on the reasoning in Remark \ref{rem:vec_sgn}.
  
  As a first step we want to show that the properties \eqref{lem:gk_sgn:pos} and \eqref{lem:gk_sgn:sgn} hold for $g_{i,i}$, $g_{i,i+1}$, $\gamma_{i,i-1}$, and $\gamma_{i,i}$.
  To this end, we recall from the proof of Theorem \ref{thm:green_g} that since $g_{i,j}$ and $\gamma_{i,j}$ satisfy \eqref{eq:fundrelggkk}, they must also satisfy
  \begin{equation*}
    \begin{bmatrix}
      g_{i,j} \\ -\gamma_{i,j-1}
    \end{bmatrix} =
     B_j \cdots B_{i-1} \begin{bmatrix}
      g_{i,i} \\ -\gamma_{i,i-1}
    \end{bmatrix}, \qquad j \le i-1 
  \end{equation*}
  and
  \begin{equation*}
    \begin{bmatrix}
      g_{i,j} \\ \gamma_{i,j-1}
    \end{bmatrix} =
    A_{j-1} \cdots A_{i+1} \begin{bmatrix}
      g_{i,i+1} \\ \gamma_{i,i}
    \end{bmatrix}, \qquad j \ge i+2,
  \end{equation*}
  with $A_k$ and $B_k$ as defined in \eqref{eq:forw_g} and \eqref{eq:backw_g}.
  By our assumptions, the Green's functions must tend to zero asymptotically, and we recall from Remark \ref{rem:vec_sgn} that a necessary condition for this is for the vectors $[g_{i,i}, -\gamma_{i,i-1}]^\tp$ and $[g_{i,i+1},\gamma_{i,i}]^\tp$ to have entries of opposite sign.
  Hence, $g_{i,i} \gamma_{i,i-1} > 0$ and $g_{i,i+1} \gamma_{i,i} < 0$, where we stress the importance of $a_j \ge 0$ for this argument to hold.
  Using only \eqref{eq:fundrelggkk} we calculate
  \begin{align*}
    0 &> g_{i,i+1} \gamma_{i,i} - g_{i,i} \gamma_{i,i-1} \\
      &= \dxi \frac{(g_{i,i+1}-g_{i,i}) \gamma_{i,i} + g_{i,i} (\gamma_{i,i}-\gamma_{i,i-1})}{\dxi} \\
      &= \dxi \left[ a_i \gamma_{i,i} \gamma_{i,i} + g_{i,i} \left[ a_i g_{i,i} - \frac{1}{\dxi} \right]  \right] \\
      &= \dxi a_i \left[ (g_{i,i})^2 + (\gamma_{i,i})^2 \right] - g_{i,i}.
  \end{align*}
  Since $a_j \ge 0$, it follows that $g_{i,i} \ge 0$.
  Recalling that $g_{i,i}$ must be nonzero according to the sign requirements, we necessarily have $g_{i,i} > 0$, and then $\gamma_{i,i-1} > 0$ follows.
  Moreover, multiplying the identity $g_{i,i+1} - \dxi a_i \gamma_{i,i} = g_{i,i}$ by $g_{i,i+1}$ and using $a_i \ge 0$, $g_{i,i} > 0$, and $g_{i,i+1} \gamma_{i,i} < 0$, we must have $g_{i,i+1} > 0$, which then implies $\gamma_{i,i} < 0$.
  
  Next we must prove that \eqref{lem:gk_sgn:pos} and \eqref{lem:gk_sgn:sgn} hold for the remaining values of $j$, and this will be achieved with a contradiction argument.
  We define the vectors
  \begin{equation*}
    v^+_{j} \coloneqq \begin{bmatrix}
      g_{i,j} \\ \gamma_{i,j-1} \end{bmatrix}, \qquad v^-_{j} \coloneqq \begin{bmatrix}
      g_{i,j+1} \\ -\gamma_{i,j}
    \end{bmatrix}
  \end{equation*}
  such that $v^+_{i+1}$ and $v^-_{i-1}$ both have positive first component and negative second component, and satisfy
  \begin{equation*}
    v^+_{j+1} \coloneqq A_{j} v^+_{j} \text{ for } j \ge i+1, \qquad v^-_{j-1} \coloneqq B_{j} v^-_{j} \text{ for } j \le i-1.
  \end{equation*}
  If we can prove that they retain the sign property under the above propagation, then we are done.
  Let us consider
  \begin{equation*}
    v^+_{j+1} \coloneqq A_{j} v^+_{j}, \qquad j \ge i+1.
  \end{equation*}
  Assume that $v^+_{j}$ does not retain the sign property, then there is some $k \ge i+1$ which is the first index such that $v^+_{k+1}$ does not have a positive first component and negative second component.
  We consider the two possible cases.
  
  The first case is $v^+_{k+1} \ge 0$ ($v^+_{k+1} \le 0$) considered entrywise.
  First of all, $v^+_{k+1}$ cannot be the zero vector as $A_k$ has full rank, since then $v^+_{k}$ would also have to be zero which contradicts $k+1$ being the first problematic index.
  Otherwise, the entrywise inequality $A_{k+1} \ge I$ leads to $v^+_{k+2} = A_{k+1} v^+_{k+1} \ge v^+_{k+1}$ ($v^+_{k+2} \le v^+_{k+1}$), and thus $\lim_{n \to+\infty}v^+_{n} \ge v^+_{k+1}$ ($\lim_{n \to+\infty}v^+_{n} \le v^+_{k+1}$).
  This is however impossible, as it contradicts the assumed decay of the Green's functions.
  
  The remaining case is that the entries interchange sign from $v^+_k$ to $v^+_{k+1}$.
  However, then we would have
  \begin{equation*}
    v^+_{k} = (A_{k})^{-1} v^+_{k+1} = \begin{bmatrix} 1 & -a_k \dxi \\ -a_k \dxi &	1 + (a_k \dxi)^2 \end{bmatrix} v^+_{k+1}.
  \end{equation*}
  Since $a_k \ge 0$, $v^+_{k}$ would also have negative first component and positive second component, which contradicts $k+1$ being the first problematic index.
  Hence, $v^+_{j}$ always has positive first component and negative second component for $j > i$, thus for $j \ge i$ it follows that $g_{i,j}$ is always positive, while $\gamma_{i,j}$ is always negative which shows that $g_{i,j}$ is decreasing in this direction.
  
  A similar argument holds in the other direction when considering $v^-_j$ and $B_j$.
  Then $-\gamma_{i,j}$ is always negative for $j<i$, which means that $g_{i,j}$ is increasing with $j$ for these indices.
  Thus, \eqref{lem:gk_sgn:pos} and \eqref{lem:gk_sgn:sgn} hold for $\{g_{i,j}\}_{j\in\Z}$ and $\{\gamma_{i,j}\}_{j\in\Z}$.
\end{proof}

\section{An equivalent semi-discrete system for global solutions in time}
\label{sec:analysis}
We now return to the initial value problem \eqref{eq:2CH}. We use the Lagrangian formulation introduced in earlier works, see
\cite{Grunert2012}, but reformulate the governing equations by propagating the fundamental solutions of the momentum operator.

\subsection{Reformulation of the continuous problem using operator propagation}
The 2CH system can be written as
\begin{equation*}
  u_t + uu_x + P_x = 0,\quad   \rho_t + (u\rho)_x = 0
\end{equation*}
for $P$ implicitly defined by
\begin{equation}
  \label{eq:defPsys}
  P - P_{xx} = u^2 + \frac12u_x^2 + \frac12\rho^2.
\end{equation}
Let us introduce $\bar{\rho} \coloneqq \rho - \rho_{\infty}$ to ease notation. Note that most expressions simplify when
we consider $\rho_\infty=0$. We have chosen to cover the case of arbitrary $\rho_\infty$, to allow for the initial condition
$\rho(0,x) = \epsi$, for any $\epsi>0$.  Such initial data lead to solutions without blow-up, see \cite{Grunert2015}. In the case
of the 2CH system, the conservation law for the energy is given by
\begin{equation}
  \label{eq:consenergsys}
  (\tfrac12(u^2 + u_x^2 + \bar{\rho}^2))_t + (u\tfrac12(u^2 + u_x^2 + \bar{\rho}^2))_x + (uR)_x = 0,
\end{equation}
where we have used $P$ from \eqref{eq:defPsys} to define
\begin{equation}
  \label{eq:defRsys}
  R = P - \tfrac12 u^2 - \tfrac12\rho_\infty^2.
\end{equation}
We can check that the first order system
\begin{equation}
  \label{eq:defRQsys}
  \begin{bmatrix}
    -\partial_x & 1 \\
    1 & -\partial_x
  \end{bmatrix}
  \circ
  \begin{bmatrix}
    R\\Q
  \end{bmatrix}
  =
  \begin{bmatrix}
    uu_x\\\tfrac12(u^2 + u_x^2 + \bar{\rho}^2) + \rho_\infty \bar{\rho}
  \end{bmatrix}
\end{equation}
is equivalent to \eqref{eq:defPsys}. Hence,
\begin{subequations}
  \label{eq:newequivchsys}
  \begin{align}
    u_t + uu_x + Q &= 0,\\
    \rho_t + (u\rho)_x &= 0
  \end{align}
\end{subequations}
and \eqref{eq:defRQsys} is yet another form of the 2CH system.

We introduce as before the Lagrangian position $y(t,\xi)$ and velocity $U(t,\xi)$. Moreover, we define the Lagrangian density
$r(t,\xi) \coloneqq \rho(t,y(t,\xi))y_{\xi}(t,\xi)$, and the cumulative energy $H$ given by
\begin{equation}
  H(t,\xi) = \frac12 \int_{-\infty}^{y(t,\xi)} ( u^2 + u_x^2 + \bar{\rho}^2 )(t,x)\,dx = \frac12 \int_{-\infty}^{\xi} ( ( u^2 +
  u_x^2 + \bar{\rho}^2 ) \circ y ) y_\xi(t,\eta) \,d\eta,
\end{equation}
as well as the Lagrangian variables $\bar Q = Q\circ y$ and $\bar R = R\circ y$. From \eqref{eq:newequivchsys}, we get $U_t =
-\bar Q$ and $r_t = 0$, while the conservation of energy \eqref{eq:consenergsys} yields $H_t = -U\bar R$. Finally, we rewrite
the system \eqref{eq:defRQsys} in terms of the Lagrangian variables. To simplify the notation, we replace $\bar Q$ by $Q$, and
similarly for $\bar R$. The equivalent system in Lagrangian variables is then given by
\begin{subequations}
  \label{eq:old2CH}
  \begin{align}
    y_t &= U, \label{eq:old2CH_y}\\
    U_t &= -Q, \label{eq:old2CH_U}\\
    H_t &= -U R, \label{eq:old2CH_H} \\
    r_t &= 0, \label{eq:old2CH_r}
  \end{align}
\end{subequations}
with
\begin{equation}
  \label{eq:RQsysdeflag}
  \begin{bmatrix}
    -\partial_\xi & y_\xi\\
    y_\xi & -\partial_\xi
  \end{bmatrix}
  \circ
  \begin{bmatrix}
    R\\Q
  \end{bmatrix}
  =
  \begin{bmatrix}
    UU_\xi\\ H_\xi + \rho_\infty (r-\rho_{\infty}y_\xi)
  \end{bmatrix}.
\end{equation}
In \eqref{eq:RQsysdeflag}, we use the same notation for the variable $y_\xi$ and
the operator for pointwise multiplication by $y_\xi$. We will use this convention
for the rest of the paper. The equivalence between \eqref{eq:defRQsys} and
\eqref{eq:RQsysdeflag} holds only assuming the that $y_\xi\geq0$ and all the
functions are smooth enough to do the manipulation.

Note that we need to decompose the variables $y$ and $r$ in \eqref{eq:old2CH} to give them a decay
which enables us to define them in a proper functional setting. We define
$\zeta$ and $\bar r$ as
\begin{equation*}
y(t,\xi) = \zeta(t,\xi) +
\xi\quad\text{ and }\quad r(t,\xi) = \bar{r}(t,\xi) + \rho_{\infty}y_{\xi}(t,\xi).
\end{equation*}
The Banach space which contains $\zeta$ and $H$ is the subspace of bounded and
continuous functions with derivative in $\Ltwo$,
\begin{equation}\label{eq:spaceV}
\V \coloneqq \{f \in \mathbf{C}_{\text{b}}(\R) \:|\: f_{\xi} \in \Ltwo(\R) \},
\end{equation}
endowed with the norm $\|f\|_{\V} \coloneqq \|f\|_{\Linfty} + \|f_{\xi}\|_{\Ltwo}$.
Then we let
\begin{equation}\label{eq:spaceE}
\E \coloneqq \V \times \Hone \times \V \times \Ltwo
\end{equation}
be a Banach space tailored for the tuple $X = (\zeta, U, H, \bar{r})$ with norm
\begin{equation}
\|X\|_{\E} \coloneqq \|\zeta\|_\V + \|U\|_{\Hone} + \|H\|_{\V} + \|\bar{r}\|_{\Ltwo}.
\end{equation}
The unique solution of \eqref{eq:old2CH}, as studied in \cite{Grunert2012},
is then completely described by this tuple.

An alternative viewpoint of the equivalent Lagrangian system is the following.
Let us define the operators $\Gcal$ and $\Kcal$ as the fundamental solutions to
the operator in \eqref{eq:RQsysdeflag}, meaning that they satisfy
\begin{equation}
  \label{eq:GKid}
  \begin{bmatrix}
    -\partial_\xi & y_\xi\\
    y_\xi & -\partial_\xi
  \end{bmatrix}
  \circ
  \begin{bmatrix}
    \Kcal&\Gcal\\
    \Gcal&\Kcal
  \end{bmatrix}
  =
  \begin{bmatrix}
    \delta&0\\
    0&\delta
  \end{bmatrix}.
\end{equation}
As we mentioned in the introduction, the operators $\Kcal$ and $\Gcal$ can be
computed explicitly, using the fundamental solution of the Helmholtz operators in
Eulerian coordinates. If we define
\begin{subequations}
  \label{eq:contkernels}
  \begin{equation}
    \label{eq:kernelg}
    g(\eta,\xi) = \frac12e^{-\abs{y(\xi) - y(\eta)}}
  \end{equation}
  and
  \begin{equation}
    \label{eq:kernelk}
    \kappa(\eta,\xi) = -\frac12\sgn(\xi - \eta)e^{-\abs{y(\xi) - y(\eta)}},
  \end{equation}
\end{subequations}
then we can check that the operators defined as $\Gcal(f)=\int_\Real
g(\eta,\xi)f(\eta)\,d\eta$ and $\Kcal(f)=\int_\Real
\kappa(\eta,\xi)f(\eta)\,d\eta$ are solutions to \eqref{eq:GKid}, again assuming
$y$ is monotone increasing in $\xi$. This means that we can obtain explicit expressions for $R$ and $Q$
given by
\begin{subequations}
  \label{eq:explicitRQintform}
  \begin{align}
    \label{eq:explicitRQintform-R}  R &= \int_\Real \kappa(\eta,\xi) U(\eta)U_\xi(\eta)\,d\eta + \int_\Real g(\eta,\xi)( H_\xi(\eta) + \rho_\infty (r(\eta) -\rho_{\infty}y_\xi(\eta)))\,d\eta, \\
    \label{eq:explicitRQintform-Q}
    Q &= \int_\Real g(\eta,\xi) U(\eta)U_\xi(\eta)\,d\eta + \int_\Real \kappa(\eta,\xi)( H_\xi(\eta) + \rho_\infty (r(\eta) -\rho_{\infty}y_\xi(\eta)))\,d\eta.
  \end{align}
\end{subequations}
In \cite{holden2007global, Grunert2012}, the authors prove that the right-hand side
of their respective versions of \eqref{eq:old2CH} is locally Lipschitz, and
consecutive contraction arguments yield the existence of a unique short-time
solution. In the same manner, we would like to prove that there exists a unique
short-time solution for our semi-discrete system, but the explicit forms for $g$
and $\kappa$ in \eqref{eq:contkernels} are not available in the discrete setting. As
a remedy, we propagate the kernel operators corresponding to $\Kcal$ and $\Gcal$
by incorporating them in the governing equations. Given the evolution of $y$,
that is, $y_t = U$, we can derive evolution equations for
$\Gcal$ and $\Kcal$. Let us see how this can be done in the continuous case
before dealing with the discrete case. Formally we have
\begin{align*}
  \fracpart\Gcal(f) &= \frac12\int_\Real \fracpart e^{-\abs{y(t,\xi) - y(t,\eta)}}f(\eta)\,d\eta\\
                    &= -\frac12\int_\Real \sgn(y(t,\xi) - y(t,\eta))(y_t(t,\xi) - y_t(t,\eta)) e^{-\abs{y(t,\xi) - y(t,\eta)}}f(\eta)\,d\eta\\
                    &= -\frac12\int_\Real \sgn(y(t,\xi) - y(t,\eta))(U(t,\xi) - U(t,\eta)) e^{-\abs{y(t,\xi) - y(t,\eta)}}f(\eta)\,d\eta.
\end{align*}
Here we assume again that we know \textit{a priori} that $y$ remains a monotone
function with respect to $\xi$. Then, we can rewrite the last equality as
\begin{equation}
  \label{eq:dertGcal}
  \fracpart\Gcal(f) = -\frac12\int_\Real \sgn(\xi - \eta)(U(t,\xi) - U(t,\eta)) e^{-\abs{y(t,\xi) - y(t,\eta)}}f(\eta)\,d\eta.
\end{equation}
For a function $U$, we can associate a pointwise multiplication operator which we denote by $\Ucal$. That is, we may write
$\Ucal(f)(\xi) = U(\xi)f(\xi)$ for any function $f$ and any point $\xi$. The integral kernel of $\Ucal$ would be singular and equal to
$U(\xi)\delta(\xi - \eta)$. Using this notation, we can rewrite \eqref{eq:dertGcal} as
\begin{equation*}
  \fracpart\Gcal(f) = (\Ucal \circ \Kcal)(f) - (\Kcal \circ \Ucal)(f).
\end{equation*}
This can equivalently be stated as
\begin{equation}
  \label{eq:GKcal_evol}
  \fracpart\Gcal = [\Ucal, \Kcal], \qquad \fracpart\Kcal = [\Ucal, \Gcal],
\end{equation}
where the evolution equation for $\Kcal$ is derived analogously. An equivalent system of equations for the 2CH system is then
given by
\begin{subequations}
  \label{eq:newcontequivsys}
  \begin{equation}
    y_{t} = U,\quad U_{t} = -Q,\quad H_{t} = -UR, \quad r_t = 0,
  \end{equation}
  \label{eq:newcontequivsys2}
  \begin{equation}
    \fracpart\Gcal = [\Ucal, \Kcal],\quad \fracpart\Kcal = [\Ucal, \Gcal],
  \end{equation}
\end{subequations}
with $R$ and $Q$ given as
\begin{equation}
  \label{eq:newdefQR}
  \begin{bmatrix}
    R\\Q
  \end{bmatrix}
  =
  \begin{bmatrix}
    \Kcal&\Gcal\\
    \Gcal&\Kcal
  \end{bmatrix}
  \circ
  \begin{bmatrix}
    UU_\xi\\
    H_\xi + \rho_\infty (r - \rho_{\infty}y_\xi)
  \end{bmatrix}
\end{equation}
For all initial conditions we will consider, the new system \eqref{eq:newcontequivsys} and \eqref{eq:newdefQR} gives rise to the
same solutions as the one given by \eqref{eq:old2CH}, \eqref{eq:contkernels} and \eqref{eq:explicitRQintform}. It can be shown
that the evolution equations \eqref{eq:newcontequivsys2} for $\Gcal$ and $\Kcal$ can be obtained directly from the product
identity \eqref{eq:GKid} by differentiating it and using integration by parts.

\subsection{Reformulation of the semi-discrete system}
Turning back to the formal expression \eqref{eq:semidisc_sys_formal}, we use the
the Green's functions from Theorem \ref{thm:green_g} and Corollary
\ref{cor:green_k} to write out the right-hand side explicitly. Considering
\eqref{eq:fundrelggkk} where we now have $a_j = \D_+y_j$, we observe that they
correspond to the discrete versions of \eqref{eq:GKid}. Indeed, we have the
following identity
\begin{equation}
  \label{eq:discggkkid}
  \begin{bmatrix}
    -\D_{j-} & (\D_{+}y_j) \\
    (\D_{+}y_j) & -\D_{j+}
  \end{bmatrix}
  \circ
  \begin{bmatrix}
    \gamma_{i,j}&  k_{i,j}\\
    g_{i,j}  & \kappa_{i,j}
  \end{bmatrix}
  = \frac{1}{\dxi}
  \begin{bmatrix}
    \delta_{i,j}&0\\
    0&\delta_{i,j}
  \end{bmatrix}
\end{equation}
which has to be compared with \eqref{eq:GKid} in the continuous case. Thus,
the second equation in \eqref{eq:semidisc_sys_formal} can be rewritten as
\begin{equation}
  \label{eq:defdotuj}
  \dot{U}_j = -\sumZxi{i}{g_{i,j} \left(U_i (\D_+U_i) + \D_-\left(\frac{h_i}{\D_+y_i} + \rho_{\infty}\frac{\bar{r}_i}{\D_+y_i}\right)\right)},
\end{equation}
where we have defined
\begin{equation}
  \bar{r}_i \coloneqq \rho_{0,i} - \rho_{\infty} (\D_+y_i)
  \label{eq:rbar}
\end{equation}
and
\begin{equation}
  \label{eq:h}
  h_i \coloneqq \frac12 (U_i)^2(\D_+y_i) + \frac12 \frac{(\D_+U_i)^2}{\D_+y_i} + \frac{1}{2} \frac{\bar{r}_i^2}{\D_+y_i}.
\end{equation}
From the expressions in \eqref{eq:defdotuj} and \eqref{eq:h}, it seems that, if $\D_+y_i$ goes to zero for some index $i$ and time
$t$, then $\dot U_j$ and $h_i$ blow up. However, it turns out that these quantities remain bounded, which allows us to extend the solution
globally in time. To obtain a well-defined system, we are going to remove the explicit dependence on $1/\D_+y_i$ by adding
$h$ to the set of variables of the system.

With the discrete kernels $g$, $k$, $\gamma$, and $\kappa$ from Section \ref{sec:aux} we are able to express $\A{\D_+y}^{-1}$ in \eqref{eq:semidisc_sys_formal} to
obtain \eqref{eq:defdotuj}. However, since we do not know their explicit form as functions of $\D_+y_j$,
we derive a system
analogous to \eqref{eq:newcontequivsys} by introducing $g$, $k$, $\gamma$, and $\kappa$ as variables. To compute the evolution of
$g$, $k$, $\gamma$, and $\kappa$, we repeat the procedure from the continuous case. By differentiating \eqref{eq:discggkkid} and
using the fact that $\dot y_i=U_i$, we get
\begin{equation*}
  \begin{bmatrix}
    \dot\gamma & \dot k\\
    \dot g & \dot\kappa
  \end{bmatrix}
  =
  -\begin{bmatrix}
    \gamma &  k\\
    g & \kappa
  \end{bmatrix}
  \ast
  \begin{bmatrix}
    0 & \D_{+}U \\
    \D_{+}U & 0
  \end{bmatrix}
  \begin{bmatrix}
    \gamma &  k\\
    g & \kappa
  \end{bmatrix}
\end{equation*}
which in explicit form yields
\begin{align}
  \begin{aligned}
    \dot g_{i,j} &= -\kappa_{m,j}\ast((\D_+U_m) \gamma_{i,m}) - g_{m,j}\ast((\D_+U_m) g_{i,m}),\\
    \dot \gamma_{i,j} &= -k_{m,j}\ast((\D_+U_m)\gamma_{i,m}) - \gamma_{m,j}\ast((\D_+U_m) g_{i,m}),
  \end{aligned}
                        \label{eq:ggamma_dot}
\end{align}
and
\begin{align}
  \label{eq:kkappa_dot}
  \begin{aligned}
    \dot k_{i,j} &= -k_{m,j}\ast((\D_+U_m) k_{i,m}) - \gamma_{m,j}\ast((\D_+U_m) \kappa_{i,m}),\\
    \dot \kappa_{i,j} &= -\kappa_{m,j}\ast((\D_+U_m) k_{i,m}) - g_{m,j}\ast((\D_+U_m) \kappa_{i,m}).
  \end{aligned}
\end{align}
Here we denote by $(g \ast f)_j$ the action of the operator $g_{i,j}$ as a summation kernel on a sequence $f_i$, defined as
\begin{equation*}
  (g \ast f)_j = \sumZxi{i}{ g_{i,j}f_i }.
\end{equation*}
Moreover, we introduce the following norms for the operators,
\begin{align}
\label{eq:fund_norms}
\begin{aligned}
\norml{p}{g} &= \sup_{i}\norml{p}{g_i} = \sup_{i}\left(\sumZxi{j}{|g_{i,j}|^p}\right)^{\frac1p}, \\
\normlinfty{g} &= \sup_{i}\left(\sup_{j}|g_{i,j}|\right).
\end{aligned}
\end{align}
We establish in the next lemma some important properties for the fundamental solutions.

\begin{lem}[Preservation of identities]\label{lem:fundsolprops}
  Let $T > 0$, and assume that, for $t \in [0,T]$, $(\D_+y_j(t))_t = \D_+U_j(t)$
  for $j \in \Z$, and that $g, k, \gamma, \kappa$ and $\D_+U$ are bounded in
  $\ltwo$-norm in $[0,T]$.  Then, for $t \in [0,T]$ the sequences
  $g_{i,j}(t)$, $k_{i,j}(t)$, $\gamma_{i,j}(t)$, $\kappa_{i,j}(t)$ satisfy the
  following identities:
  \begin{enumerate}[(i)]
  \item The Green's function identities \eqref{eq:discggkkid},
  \item The symmetry identities
    \begin{equation}
      \label{eq:symopgk}
      g_{j,i} = g_{i,j}\quad\text{ and }\quad k_{j,i} = k_{i,j},
    \end{equation}
    and the antisymmetry identity
    \begin{equation}
      \label{eq:relgk}
      \gamma_{j,i} = -\kappa_{i,j}.
    \end{equation}
  \end{enumerate}
\end{lem}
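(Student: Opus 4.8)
The plan is to treat both parts as uniqueness statements for linear homogeneous ODEs satisfied by the fundamental solutions. Each identity already holds at $t=0$ by the construction in Theorem \ref{thm:green_g} and Corollary \ref{cor:green_k}, and the symmetries by Remark \ref{rem:greens}. Hence it suffices to show that the \emph{defect} measuring the failure of each identity solves a homogeneous linear system whose coefficients are bounded in $\ltwo$-operator norm on $[0,T]$, so that a Gronwall argument forces the defect to remain zero throughout $[0,T]$.

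For part (i), I would collect the four scalar relations in \eqref{eq:discggkkid} into the single operator identity $\mathcal{M}\mathcal{S} = \tfrac{1}{\dxi}\delta I$, where $\mathcal{M}$ is the block difference operator on the left of \eqref{eq:discggkkid} with multiplication entries $a_j = \D_+y_j$, and $\mathcal{S}$ is the block of summation kernels $g,k,\gamma,\kappa$. The hypothesis $(\D_+y_j)_t = \D_+U_j$ gives $\dot{\mathcal{M}} = \mathcal{N}$, the off-diagonal block multiplication operator with entries $\D_+U_j$, and the evolution equations \eqref{eq:ggamma_dot}--\eqref{eq:kkappa_dot} are precisely $\dot{\mathcal{S}} = -\mathcal{S}\ast\mathcal{N}\ast\mathcal{S}$. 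Setting $E(t) \coloneqq \mathcal{M}\mathcal{S} - \tfrac{1}{\dxi}\delta I$, which vanishes at $t=0$, I would differentiate, substitute the evolution equation, and use $\mathcal{M}\mathcal{S} = E + \tfrac{1}{\dxi}\delta I$ together with the fact that $\tfrac{1}{\dxi}\delta I$ is the identity for $\ast$, to obtain $\dot E = \mathcal{N}\ast\mathcal{S} - (E + \tfrac{1}{\dxi}\delta I)\ast\mathcal{N}\ast\mathcal{S} = -E\ast(\mathcal{N}\ast\mathcal{S})$. Since the coefficient $\mathcal{N}\ast\mathcal{S}$ is bounded in $\ltwo$-operator norm---this is where the boundedness of $\D_+U$ and of $g,k,\gamma,\kappa$ in $\ltwo$ enters, via the Young-type estimate of Proposition \ref{prp:young}---submultiplicativity yields $\tfrac{d}{dt}\norm{E} \le C\norm{E}$, and $E(0)=0$ forces $E\equiv 0$ on $[0,T]$.

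For part (ii), I would introduce the symmetry defects $g_{i,j}-g_{j,i}$ and $k_{i,j}-k_{j,i}$ and the antisymmetry defect $\gamma_{i,j}+\kappa_{j,i}$, all vanishing at $t=0$ by Remark \ref{rem:greens}. Differentiating $g_{i,j}$ through \eqref{eq:ggamma_dot} and comparing with the derivative of $g_{j,i}$, one checks that after relabeling the summation index and using that multiplication by $\D_+U_m$ is symmetric under the $\ltwo$-pairing, each term of $\dot g_{i,j}$ matches the corresponding term of $\dot g_{j,i}$ once the relations \eqref{eq:symopgk}--\eqref{eq:relgk} are inserted; the analogous computations for $k$ and for the pair $(\gamma,\kappa)$ show that the three defects satisfy a closed homogeneous linear system with coefficients again controlled in $\ltwo$ by $\D_+U$ and the kernels. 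A second Gronwall argument keeps these defects at zero on $[0,T]$. Alternatively, once part (i) is established the kernels are the fundamental solutions of $\A{\D_+y}$ and $\B{\D_+y}$ at every time, so the symmetries follow from their uniqueness together with the explicitly symmetric form \eqref{eq:g}, \eqref{eq:k}.

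The main obstacle is making the operator calculus of part (i) rigorous: $\mathcal{M}$ contains the unbounded difference operators $\D_{j\pm}$, so the associativity $\mathcal{M}(\mathcal{S}\ast\mathcal{N}\ast\mathcal{S}) = (\mathcal{M}\mathcal{S})\ast(\mathcal{N}\ast\mathcal{S})$ and the boundedness of the coefficient operator must be justified in a norm in which the Gronwall estimate genuinely closes. This is exactly the role of the $\ltwo$-boundedness hypotheses on $g,k,\gamma,\kappa,\D_+U$ and of Proposition \ref{prp:young}; the remaining steps are routine index bookkeeping.
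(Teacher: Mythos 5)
Your proposal is correct, and for part (i) it is essentially the paper's own argument written in operator shorthand: the paper introduces the four defect sequences $z_{1},\dots,z_{4}$ (the entries of your $E$), differentiates them using $(\D_+y_j)_t=\D_+U_j$ and \eqref{eq:ggamma_dot}--\eqref{eq:kkappa_dot}, observes the same cancellation of the $\tfrac{1}{\dxi}\delta$ term, and closes a Gr\"onwall estimate --- in the $\linfty$-norm over $(i,j)$, bounding the contracted $m$-sum by Cauchy--Schwarz with the assumed $\ltwo$-bounds on $\D_+U$ and the kernels, rather than in an operator norm. The associativity issue you flag is harmless: $\D_{j\pm}$ is a finite linear combination of shifts in $j$ while the contraction is over $m$, so the interchange is immediate for absolutely convergent sums. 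For part (ii) you genuinely diverge from the paper: the paper runs no second Gr\"onwall argument but instead, once (i) is known at each time $t$, derives the symmetries purely algebraically, writing $g_{j,i}=\sumZxi{m}{[(\D_+y_m)g_{i,m}-\D_{m-}\gamma_{i,m}]g_{j,m}}$ and summing by parts twice using $\D_{m+}g_{j,m}=(\D_+y_m)\gamma_{j,m}$; this buys a proof with no further time integration. Your defect-propagation route also works and is independent of (i), but note that to get a genuinely \emph{homogeneous linear} system you must linearize by the telescoping splitting $ab-a'b'=a(b-b')+(a-a')b'$, which couples $g_{i,j}-g_{j,i}$ and $k_{i,j}-k_{j,i}$ to the mixed defect $\gamma_{i,j}+\kappa_{j,i}$; the system does close. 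One caution on your stated ``alternative'': identifying the propagated kernels at time $t>0$ with the uniquely constructed fundamental solutions of Theorem \ref{thm:green_g} requires $\D_+y_j(t)\ge 0$ and a decay class in which uniqueness holds, neither of which is among the hypotheses of this lemma ($\D_+y\ge 0$ is only established later, in Lemma \ref{lem:solInSet}); both the paper's algebraic computation and your Gr\"onwall argument avoid this dependence.
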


\begin{proof}[Proof of Lemma \ref{lem:fundsolprops}]
  Recall from Remark \ref{rem:greens} that these identities are satisfied for
  $t=0$ by construction.  The rest of the proof then relies on Gr\"{o}nwall's
  inequality.  \textit{(i)}: We introduce the four operators $z_{l}$ for
  $l=1,2,3,4$ defined as
  \begin{align*}
    z_{1,i,j}  &=  (\D_+y_i) g_{i,j} - \D_{j-}\gamma_{i,j} - \frac{\delta_{i,j}}{\dxi},&
    z_{2,i,j}  &=  (\D_+y_j) k_{i,j} - \D_{j+}\kappa_{i,j} - \frac{\delta_{i,j}}{\dxi},\\
    z_{3,i,j}  &=  (\D_+y_j) \gamma_{i,j} - \D_{j+}g_{i,j},&
    z_{4,i,j}  &=  (\D_+y_j) \kappa_{i,j} - \D_{j-}k_{i,j}.
  \end{align*}
  Using $(\D_+y_j(t))_t = \D_+U_j(t)$ and \eqref{eq:ggamma_dot} we find that
  \begin{align*}
    (z_{1,i,j})_t &= (\D_+y_j)_t g_{i,j} + (\D_+y_j) \dot{g}_{i,j} - \D_{j-}\dot{\gamma}_{i,j} \\
                  &= (\D_+U_j) g_{i,j} - (\D_+y_j)  \sumZxi{m}{(\D_+U_m) \left( g_{i,m}g_{m,j} + \gamma_{i,m}\kappa_{m,j} \right)} \\
                  &\quad+ \D_{j-}\sumZxi{m}{(\D_+U_m) \left( g_{i,m}\gamma_{m,j} + \gamma_{i,m} k_{m,j} \right)} \\
                  &= (\D_+U_j) g_{i,j} - \sumZxi{m}{(\D_+U_m) g_{i,m}\left( (\D_+y_j) g_{m,j} - \D_{j-}\gamma_{m,j} \right)} \\
                  &\quad- \sumZxi{m}{(\D_+U_m) \gamma_{i,m}\left( (\D_+y_j) \kappa_{m,j} - \D_{j-}k_{m,j} \right)} \\
                  &= -\sumZxi{m}{(\D_+U_m) (g_{i,m}z_{1,m,j} +  \gamma_{i,m}z_{4,m,j})}.
  \end{align*}
  Similarly, one shows that
  \begin{align*}
    (z_{2,i,j})_t &= -\sumZxi{m}{(\D_+U_m) (k_{i,m}z_{2,m,j} + \kappa_{i,m}z_{3,m,j})}, \\
    (z_{3,i,j})_t &= -\sumZxi{m}{(\D_+U_m) (g_{i,m}z_{3,m,j} + \gamma_{i,m}z_{2,m,j})}
  \end{align*}
  and
  \begin{equation*}
    (z_{4,i,j})_t = -\sumZxi{m}{(\D_+U_m) (k_{i,m}z_{4,m,j} + \kappa_{i,m}z_{1,m.j})}.
  \end{equation*}
  Integrating the first of these, taking absolute values, applying H\"{o}lder's inequality and taking supremum over $i$ we obtain
  \begin{multline*}
    \sup_{i}\left| z_{1,i,j}(t) \right| \le \sup_{i}\left| z_{1,i,j}(0) \right| \\
    + \int_{0}^{t}\left(\normltwo{\D_+U(s)} \left(\normltwo{g(s)}\sup_{m}\left| z_{1,m,j}(s)\right| + \normltwo{\gamma(s)}\sup_{m}\left| z_{4,m,j}(s)\right|\right)\right)ds 
  \end{multline*}
  Treating the three other relations similarly and defining
  \begin{align*}
    Z(t) = \sum_{l=1}^4 \normlinfty{z_l(t)},
  \end{align*}
  we may add the four inequalities to obtain an inequality of the form
  \begin{equation*}
    Z(t) \le Z(0) + \int_{0}^{t} C(s) Z(s)\,ds,
  \end{equation*}
  where
  \begin{equation*}
    C(s) = 2\normltwo{\D_+U}\left(\normltwo{g} + \normltwo{k} + \normltwo{\gamma} + \normltwo{\kappa}\right)(s)
  \end{equation*}
  is bounded by assumption. Since $Z(0) = 0$, Gr\"{o}nwall's inequality yields
  $Z(t) = 0$ for $t \in [0,T]$, which proves the result.
  
  \textit{(ii):} We prove the symmetry of $g$. From \eqref{eq:discggkkid} we
  have $(\D_+y_m) g_{i,m} - \D_{m-}\gamma_{i,m}=\frac1{\Delta\xi}\delta_{i,m}$,
  such that summation by parts shows
  \begin{align*}
    g_{j,i} &= \sumZxi{m}{ \left[ (\D_+y_m)g_{i,m} - \D_{m-}\gamma_{i,m} \right] g_{j,m} } \\
            &= \sumZxi{m}{ \left[ (\D_+y_m) g_{i,m} g_{j,m} + \gamma_{i,m} \D_{m+}g_{j,m} \right] }.
  \end{align*}
  Then, we use the identity $\D_{m+}g_{j,m} = (\D_{+}y_m) \gamma_{j,m}$ from \eqref{eq:discggkkid} twice,
  first for $j$ and then for $i$, to obtain
  \begin{align*}
    g_{j,i} &= \sumZxi{m}{ \left[ (\D_+y_m) g_{i,m} g_{j,m} + \gamma_{i,m} (\D_+y_m) \gamma_{j,m} \right] } \\
            &= \sumZxi{m}{ \left[ (\D_+y_m) g_{i,m} g_{j,m} + (\D_{m+}g_{i,m}) \gamma_{j,m} \right] }.
  \end{align*}
  After summation by parts and using \eqref{eq:discggkkid} once more, we end up with
  \begin{equation*}            
    g_{j,i} = \sumZxi{m}{ g_{i,m} \left[ (\D_+y_m) g_{j,m} + \D_{m-}\gamma_{j,m} \right] } = g_{i,j},
  \end{equation*}
  and the symmetry of $g$ is proved. A similar procedure shows the symmetry of
  $k_{i,j}$. For the antisymmetry we also use \eqref{eq:discggkkid} and summation by parts to compute
  \begin{align*}
    \gamma_{j,i} &= \sumZxi{m}{ \left[ (\D_+y_m)k_{i,m} - \D_{m+}\kappa_{i,m} \right] \gamma_{j,m} } \\
                 &= \sumZxi{m}{ \left[ k_{i,m} \D_{m+}g_{j,m} + \kappa_{i,m} \D_{m-}\gamma_{j,m} \right] } \\
                 &= -\sumZxi{m}{ \left[ (\D_{m-}k_{i,m}) g_{j,m} - \kappa_{i,m} \D_{m-}\gamma_{j,m} \right] } \\
                 &= -\sumZxi{m}{ \kappa_{i,m} \left[ (\D_+y_m) g_{j,m} - \D_{m-}\gamma_{j,m} \right] } \\
                 &= -\kappa_{i,j}.
  \end{align*}
\end{proof}
Returning to \eqref{eq:defdotuj}, the second term in the right-hand side of the governing equations can be simplified as follows,
\begin{align*}
  \begin{aligned}
    -\sumZxi{i}{g_{i,j} \D_-\left(\frac{h_i}{\D_+y_i} + \rho_{\infty}\frac{\bar{r}_i}{\D_+y_i}\right)}& = \sumZxi{i}{\frac{\D_{i+}g_{j,i}}{\D_+y_i} \left(h_i + \rho_{\infty}\bar{r}_i\right)} \\
    &= \sumZxi{i}{\gamma_{j,i} \left(h_i + \rho_{\infty}\bar{r}_i\right)} \\
    &= -\sumZxi{i}{ \kappa_{i,j} \left(h_i + \rho_{\infty}\bar{r}_i\right)},
  \end{aligned}
\end{align*}
where we have used \eqref{eq:discggkkid} and \eqref{eq:relgk}. We define
\begin{equation}
  \label{eq:Q}
  Q_j \coloneqq \sumZxi{i}{g_{i,j} U_i (\D_+U_i)} + \sumZxi{i}{\kappa_{i,j} \left( h_i + \rho_\infty\bar{r}_i \right)}.
\end{equation}
Then, the evolution of $U$ is given by
\begin{equation}\label{eq:evol_U}
  \dot{U}_j = -Q_j
\end{equation}
The form of $Q$ in \eqref{eq:Q} motivates the definition
\begin{equation}
  \label{eq:R}
  R_j \coloneqq \sumZxi{i}{\gamma_{i,j} U_i (\D_+U_i)} + \sumZxi{i}{k_{i,j} \left(h_i + \rho_\infty\bar{r}_i\right)}.
\end{equation}
Indeed, with these definitions we have
\begin{equation*}
  \begin{bmatrix}
    R\\Q
  \end{bmatrix}
  =
  \begin{bmatrix}
    \gamma & k \\
    g&\kappa
  \end{bmatrix} \ast
  \begin{bmatrix}
    U (\D_+U)\\
    h +  \rho_{\infty} \bar{r} 
  \end{bmatrix},
\end{equation*}
meaning $R$ and $Q$ satisfies
\begin{equation}
  \label{eq:discRQrel}
  \begin{bmatrix}
    -\D_{-} & (\D_{+}y_j) \\
    (\D_{+}y_j) & -\D_{+}
  \end{bmatrix}
  \circ
  \begin{bmatrix}
    R_j \\ Q_j
  \end{bmatrix}
  = \begin{bmatrix}
    U_j (\D_+U_j)\\
    h_j +  \rho_{\infty} \bar{r}_j 
  \end{bmatrix}.
\end{equation}
We recognize this as the discrete version of \eqref{eq:RQsysdeflag}.

The relation $\dot{U}_j = -Q_j$ shows that we have a differential equation for
$U$ in the variables $y$, $U$, $H$, $\bar{r}$, $g$, and $\kappa$.  From
\eqref{eq:rbar} we obtain
\begin{equation}\label{eq:evol_rb}
  \dot{\bar{r}}_j = \dot{r}_j - \rho_\infty \D_+\dot{y}_j = - \rho_\infty\D_+ U_j.
\end{equation}
Next, we introduce the cumulative energy $H_j$ as
\begin{equation}
  \label{eq:H}
  H_j = \dxi \sum_{i = -\infty}^{j-1} h_i,
\end{equation}
so that $h_j = \D_+H_j$. To obtain the evolution equation of $H$, we first multiply \eqref{eq:h} by $\D_+y_i$ and differentiate
the result with respect to time to obtain
\begin{align*}
  \diff{t}{} \left((\D_+y_i) h_i \right) 
  = -U_i Q_i (\D_+y_i)^2 + U_i^2 (\D_+y_i) (\D_+U_i) - (\D_+U_i)(\D_+Q_i) - \rho_{\infty}\bar{r}_i \D_+U_i,
\end{align*}
after using \eqref{eq:evol_U} and \eqref{eq:evol_rb}. Then, we use the relation between $Q$ and $R$ given in \eqref{eq:discRQrel}
to obtain
\begin{align*}  
  \diff{t}{} \left((\D_+y_i) h_i \right) = (\D_+U_i)h_i -(\D_+y_i)[U_i (\D_-R_i) + R_i(\D_+U_i)]
\end{align*}
Simplifying further, we obtain
\begin{equation*}
  \dot{h}_i  = -\left[ U_i (\D_-R_i) + R_i (\D_+U_i)\right].
\end{equation*}
This leads to
\begin{equation}\label{eq:evol_H}
  \dot{H}_j = -\dxi \sum_{i = -\infty}^{j-1} \left[ U_i (\D_-R_i) + R_i (\D_+U_i)\right] = -U_j R_{j-1}, 
\end{equation}
where in the last equality we have used the decay at infinity together with \eqref{eq:sum_by_parts}.

Collecting all the equations and applying the relations \eqref{eq:symopgk} and
\eqref{eq:relgk} we obtain the closed system
\begin{subequations}
  \label{eq:ODE_disc}	
  \begin{align}
    \dot{\zeta}_j &= U_j, \label{eq:ODE_disc_z}	 \\
    \dot{U}_j &= -Q_j \label{eq:ODE_disc_U} \\
    \dot{H}_j &= -U_j R_{j-1}, \label{eq:ODE_disc_H} \\
    \dot{\bar{r}}_j &= - \rho_\infty \D_+U_j, \label{eq:ODE_disc_r} \\
    \dot{g}_{i,j} &= -\sumZxi{m}{(\D_+U_m) \left( g_{i,m}g_{m,j} + \gamma_{i,m} \kappa_{m,j} \right)}, \label{eq:ODE_disc_g} \\
    \dot{k}_{i,j} &= -\sumZxi{m}{(\D_+U_m) \left( k_{i,m}k_{m,j} + \kappa_{i,m} \gamma_{m,j} \right)}, \label{eq:ODE_disc_k} \\
    \dot{\gamma}_{i,j} &= -\sumZxi{m}{(\D_+U_m) \left( \gamma_{i,m} k_{m,j} + g_{i,m} \gamma_{m,j} \right)}, \label{eq:ODE_disc_gm} \\
    \dot{\kappa}_{i,j} &= -\sumZxi{m}{(\D_+U_m) \left( \kappa_{i,m} g_{m,j} + k_{i,m} \kappa_{m,j} \right)}, \label{eq:ODE_disc_kp}
  \end{align}
\end{subequations}
where $y_j = j\dxi + \zeta_j$, and we recall
\begin{align*}
  R_j &= \sumZxi{i}{\gamma_{i,j} U_i (\D_+U_i)} + \sumZxi{i}{k_{i,j} \left(h_i + \rho_\infty\bar{r}_i\right)}, \\
  Q_j &= \sumZxi{i}{g_{i,j} U_i (\D_+U_i)} + \sumZxi{i}{\kappa_{i,j} \left(h_i + \rho_\infty\bar{r}_i\right)}.
\end{align*}

\section{Existence and uniqueness of the solution to the semi-discrete 2CH system}
\label{sec:existence}

In this section, we show that the semi-discrete system \eqref{eq:ODE_disc} has a unique, globally defined solution. Let us first
introduce the functional setting for the analysis. We define the discrete analogue of the $\Hone(\R)$-inner product,
\begin{equation}
  \ip{a}{b}_{\hone} \coloneqq \sumZxi{j}{\left[ a_{j} b_{j} + (\D_+a_{j}) (\D_+b_{j}) \right]}, \label{eq:iph1}
\end{equation}
which induces a norm in the usual manner. The discrete Sobolev-type inequality
\begin{equation}\label{eq:sobolev_l}
  \|a\|_{\linfty} \le \frac{1}{\sqrt{2}} \|a\|_{\hone}
\end{equation}
can be proven in a very similar way as its continuous version, see, e.g., \cite{Brezis}. We introduce the subspace $\Vd$ of
$\linfty$ defined as
\begin{equation}
  \Vd \coloneqq \{a \in \linfty \:|\: \D_+a \in \ltwo \}, \qquad  \norm{a}_{\Vd} \coloneqq \normlinfty{a} + \normltwo{\D_+a}.
  \label{eq:Vd}
\end{equation}
We define the discrete version of the space used in the continuous setting, namely
\begin{equation}\label{eq:Ed}
  \Ed \coloneqq \Vd \times \hone \times \Vd \times \ltwo,
\end{equation}
with norm
\begin{equation*}
  \norm{(\zeta,U,H,\bar{r})}_{\Ed} \coloneqq \normVd{\zeta} + \normhone{U} +\normVd{H} + \normltwo{\bar{r}}.
\end{equation*}
Since we have included the operator kernels as solution variables in \eqref{eq:ODE_disc},
we have to introduce a space for them as well. 
To account for that the kernels are well-behaved, we choose their space to be $\lstar \coloneqq \lone \cap \linfty$
with norm $\normlstar{\cdot} = \normlone{\cdot} + \normlinfty{\cdot}$,
with $\lbf^p$-norms defined in \eqref{eq:fund_norms}.
We note that $\lstar \subset \ltwo$, since we have the inequality
\begin{equation}\label{eq:l2l1linf}
\normltwo{g} \le \normlinfty{g}^{1/2} \normlone{g}^{1/2} \le \frac12 (\normlinfty{g} + \normlone{g}).
\end{equation}
Thus, we will consider solution tuples of the form
\begin{equation*}
X = (\zeta, U, H, \bar{r}, g, k, \gamma, \kappa) \in \Ed \times (\lstar)^4 \eqqcolon \Ek,
\end{equation*}
where $\Ek$ denotes the space $\Ed$ augmented with the space for the kernel operators $\lstar$.
Moreover, for the kernel operator $g$ we have that the transpose $g^\tp$ of $g$ is given by
$(g^\tp)_{i,j} = g_{j,i}$. Then, the following result, reminiscent of Young's convolution inequality, will prove useful.
\begin{prop}[Young's inequality for general operators]
  \label{prp:young}
  \begin{equation}
    \label{eq:altYoung}
    \norml{r}{g\ast f} \le  \norml{q}{g}^{\frac{q}{r}} \norml{q}{g^\tp}^{1-\frac{q}{r}}\norml{p}{f},
  \end{equation}
  for
  \begin{equation*}
    1 + \frac{1}{r} = \frac{1}{p} + \frac{1}{q}, \qquad p, q, r \in [1,\infty].
  \end{equation*}
\end{prop}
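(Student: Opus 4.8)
The plan is to establish \eqref{eq:altYoung} by a single application of a three-factor Hölder inequality, in direct analogy with the classical proof of Young's convolution inequality; the absence of translation invariance is compensated by splitting the kernel bound between the ``row'' norm $\norml{q}{g}$ and the ``column'' norm $\norml{q}{g^\tp}$, exactly as in the Schur test. First I would reduce to nonnegative data: since
\[
\abs{(g \ast f)_j} \le \sumZxi{i}{\abs{g_{i,j}}\,\abs{f_i}} = (\abs{g} \ast \abs{f})_j,
\]
it suffices to bound $\norml{r}{\abs{g}\ast\abs{f}}$, so I may assume $g_{i,j} \ge 0$ and $f_i \ge 0$ throughout.

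The key step is the following splitting. For fixed $j$, I would write the summand as
\[
g_{i,j} f_i = \left( g_{i,j}^{\,q} f_i^{\,p} \right)^{1/r} \cdot g_{i,j}^{\,1 - q/r} \cdot f_i^{\,1 - p/r}
\]
and apply Hölder over the index $i$, with respect to the weighted counting measure $\dxi$, using the three exponents $r$, $\alpha \coloneqq qr/(r-q)$, and $\beta \coloneqq pr/(r-p)$. These are conjugate: with $1/\alpha = 1/q - 1/r$ and $1/\beta = 1/p - 1/r$ one checks $1/r + 1/\alpha + 1/\beta = 1/p + 1/q - 1/r = 1$, which holds precisely because $1 + 1/r = 1/p + 1/q$. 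Since $(1-q/r)\alpha = q$ and $(1-p/r)\beta = p$, this yields
\[
(g \ast f)_j \le \left( \sumZxi{i}{g_{i,j}^{\,q} f_i^{\,p}} \right)^{1/r} \left( \sumZxi{i}{g_{i,j}^{\,q}} \right)^{1/\alpha} \left( \sumZxi{i}{f_i^{\,p}} \right)^{1/\beta},
\]
where the middle factor is bounded by $\norml{q}{g^\tp}^{\,q/\alpha}$ uniformly in $j$, and the last factor equals $\norml{p}{f}^{\,p/\beta}$.

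Then I would raise to the power $r$, multiply by $\dxi$, and sum over $j$. The two constant factors come out of the $j$-sum, and the remaining double sum is handled by Tonelli (everything being nonnegative):
\[
\dxi \sum_{j\in\Z} \sumZxi{i}{g_{i,j}^{\,q} f_i^{\,p}} = \sumZxi{i}{f_i^{\,p}\,\norml{q}{g_i}^{\,q}} \le \norml{q}{g}^{\,q}\,\norml{p}{f}^{\,p}.
\]
Collecting exponents, and using $qr/\alpha = r-q$ together with $p + pr/\beta = r$, gives $\norml{r}{g\ast f}^{\,r} \le \norml{q}{g}^{\,q}\,\norml{q}{g^\tp}^{\,r-q}\,\norml{p}{f}^{\,r}$; taking the $r$-th root produces exactly \eqref{eq:altYoung}.

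I expect the only genuine obstacle to be the endpoint exponents, where $\alpha$ or $\beta$ becomes infinite, or $r = \infty$, so that the clean three-factor Hölder degenerates. I would dispatch these separately: the case $r = \infty$ (forcing $1/p + 1/q = 1$) is immediate from ordinary Hölder in $i$ followed by taking the supremum over $j$; the case $q = r$ (forcing $p = 1$) follows from Minkowski's inequality applied to $g\ast f = \sumZxi{i}{f_i\, g_{i,\cdot}}$; and the symmetric case $p = r$ (forcing $q = 1$) is the corresponding Schur-type bound. In each of these the relation $1/r + 1/\alpha + 1/\beta = 1$ persists under the convention $1/\infty = 0$, so the argument above in fact goes through verbatim once Hölder is read with an essential-supremum factor in the degenerate slot.
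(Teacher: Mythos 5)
Your proof is correct and takes essentially the same route as the paper's: the identical three-factor H\"older splitting $g_{i,j}f_i = \bigl(g_{i,j}^{\,q}f_i^{\,p}\bigr)^{1/r}\,g_{i,j}^{\,1-q/r}\,f_i^{\,1-p/r}$ with exponents $r$, $qr/(r-q)$, $pr/(r-p)$, followed by Tonelli in the double sum and the separate treatment of the endpoint configurations with $r=\infty$. Your explicit dispatch of the sub-degenerate cases $q=r$ and $p=r$ (via Minkowski and the Schur test) is just a slightly more careful bookkeeping of the same estimate, which the paper absorbs into the convention on infinite exponents.
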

Above, we use the convention $q/\infty = 0$ for $q < \infty$, and $\infty/\infty = 1$. Note that the standard Young's
inequality is usually given for a translation invariant kernel where $g$ takes the form $g_{i,j} = \hat g_{i - j}$ for some
sequence $\hat g$. For an operator of this form, we can check that $g^\tp=\tau\circ g\circ\tau$, where the operator $\tau$ inverts
the indexing, that is $\tau(f)_j = f_{-j}$. Since the operator $\tau$ is an isometry in all $\lsp{q}$-spaces, the expression
\eqref{eq:altYoung} simplifies to
\begin{equation*}
  \norml{r}{g\ast f} \le  \norml{q}{g}\norml{p}{f}.
\end{equation*}

\begin{proof}[Proof of Young's inequality]
  Below we will use the following discrete version of the generalized H\"{o}lder inequality,
  \begin{equation}\label{eq:genHolder}
    \norm{\prod_{k=1}^{n}a_k}_{\lbf^q} \le \prod_{k=1}^{n} \norm{a_k}_{\lbf^{p_k}} \text{ for } \sum_{k=1}^{n}\frac{1}{p_k} = \frac{1}{q}, \quad q, p_k \in [1,\infty],
  \end{equation}
  where the $j$-th component of a product of sequences is interpreted as $\left( \prod_{k=1}^{n}a_k \right)_j =
  \prod_{k=1}^{n}(a_k)_j$.  We note that the proof of \eqref{eq:genHolder} follows that of the continuous case, see, e.g.,
  \cite[Ex.~4.4]{Brezis}.  Let us denote $h = g\ast f$. Note that $r < \infty \implies p,q < \infty$, which shows that
  some configurations are impossible and can be excluded.  We deal with the three remaining cases:
  
  \textit{(i) $r < \infty$}: From the generalized H\"{o}lder inequality we obtain
  \begin{align*}
    |h_j| &\le \sumZxi{i}{ \left( |f_i|^{\frac{p}{r}} |g_{i,j}|^{\frac{q}{r}} \right) |f_i|^{1-\frac{p}{r}} |g_{i,j}|^{1-\frac{q}{r}} } \\
          &\le \left[ \sumZxi{i}{ \left( |f_i|^{\frac{p}{r}} |g_{i,j}|^{\frac{q}{r}} \right)^r } \right]^{\frac{1}{r}} \left[ \sumZxi{i}{\left(|f_i|^{1-\frac{p}{r}}\right)^{\frac{rp}{r-p}}} \right]^{\frac{r-p}{rp}} \\
          &\quad\times \left[ \sumZxi{i}{\left(|g_{i,j}|^{1-\frac{q}{r}}\right)^{\frac{rq}{r-q}}} \right]^{\frac{r-q}{rq}} \\
          &\le \left[ \sumZxi{i}{ |f_i|^{p} |g_{i,j}|^{q} } \right]^{\frac{1}{r}} \left[ \sumZxi{i}{|f_i|^{p}} \right]^{\frac{r-p}{rp}} \left[ \sup_{j \in \Z} \left( \sumZxi{i}{|g_{i,j}|^{q} } \right)^{\frac{1}{q}} \right]^{\frac{r-q}{r}}
  \end{align*}
  which implies
  \begin{align*}
    \sumZxi{j}{|h_j|^r} &\le \norm{f}_{\lbf^p}^{r-p} \left[ \sup_{j \in \Z} \left( \sumZxi{i}{|g_{i,j}|^{q} } \right)^{\frac{1}{q}} \right]^{r-q} \sumZxi{j}{ \sumZxi{i}{ |f_i|^{p} |g_{i,j}|^{q} } } \\
                        &\le \norm{f}_{\lbf^p}^{r-p} \left[ \sup_{j \in \Z} \left( \sumZxi{i}{|g_{i,j}|^{q} } \right)^{\frac{1}{q}} \right]^{r-q} \sumZxi{i}{|f_i|^p \sumZxi{j}{|g_{i,j}|^q} } \\
                        &\le \norm{f}_{\lbf^p}^{r} \left[ \sup_{j \in \Z} \left( \sumZxi{i}{|g_{i,j}|^{q} } \right)^{\frac{1}{q}} \right]^{r-q} \left[ \sup_{i \in \Z} \left( \sumZxi{j}{|g_{i,j}|^q} \right)^{\frac{1}{q}} \right]^q,
  \end{align*}
  where we have used Fubini's theorem in the second inequality.
  Taking $r$-th roots we obtain the result.
  
  \textit{(ii) $r = \infty$, $q < \infty$}: We find
  \begin{equation*}
    |h_j| \le \sumZxi{i}{|g_{i,j}| |f_i|} \le \norm{f}_{\lbf^p} \left(\sumZxi{i}{|g_{i,j}|^q}\right)^{\frac{1}{q}},
  \end{equation*}
  and taking supremum over $j$ this corresponds to \eqref{eq:altYoung} where $q/\infty = 0$.
  
  \textit{(iii) $r = q = \infty$}: We find
  \begin{equation*}
    |h_j| \le \sumZxi{i}{|g_{i,j}| |f_i|} \le \sumZxi{i}{ |f_i| \left(\sup_{j \in \Z} |g_{i,j}| \right) } \le \sup_{i \in \Z} \left(\sup_{j \in \Z} |g_{i,j}| \right) \sumZxi{i}{|f_i|},
  \end{equation*}
  and taking supremum over $j$ this corresponds to \eqref{eq:altYoung} where $\infty/\infty = 1$.
\end{proof}

To prove the short-time existence of \eqref{eq:ODE_disc}, we consider an auxiliary system which corresponds to \eqref{eq:ODE_disc},
except that we have decoupled $\zeta$, $U$ and $H$ from their discrete derivatives $\D_+\zeta$, $\D_+U$ and $\D_+H$ by introducing
the sequences $\alpha$, $\beta$ and $h$. The reason for this is that we cannot take for granted that the kernels satisfy
\eqref{eq:discggkkid} for $t > 0$, and then we cannot use \eqref{eq:discRQrel} when estimating the right-hand side of \eqref{eq:ODE_disc_U} in
$\hone$-norm.  Once the short-time existence of solutions to the auxiliary system is established, we will prove that
the coupling between $y$, $U$, $H$ and their discrete derivatives is indeed preserved if it holds initially. The auxiliary system
reads
\begin{subequations}
  \label{eq:aux}
  \begin{align}
     \label{eq:aux_z}
    \dot{\zeta}_j &= U_j, & \dot{U}_j &= -Q_j, &\dot{H}_j &= -U_j R_{j-1},\\ 
    \label{eq:aux_a}
    \dot{r}_j &= -\rho_\infty \beta_j,& 
                                        \dot{\alpha}_j &= \beta_j
  \end{align}
  \begin{align}
    \label{eq:aux_b}
    \dot{\beta}_j &= -R_j (1+\alpha_j) + h_j +  \rho_\infty r_j,\\ 
    \label{eq:aux_h}
    \dot{h}_j &= \left((U_j)^2 - R_j\right) \beta_j - U_j Q_j (1 + \alpha_j),
  \end{align}
  and
  \begin{align}
    \dot{g}_{i,j} &= -\sumZxi{m}{\beta_m \left( g_{i,m}g_{j,m} - \gamma_{i,m} \gamma_{j,m} \right)}, \label{eq:aux_g} \\
    \dot{k}_{i,j} &= -\sumZxi{m}{\beta_m \left( k_{i,m}k_{j,m} - \kappa_{i,m} \kappa_{j,m} \right)}, \label{eq:aux_k} \\
    \dot{\gamma}_{i,j} &= -\sumZxi{m}{\beta_m \left( \gamma_{i,m} k_{j,m} - g_{i,m} \kappa_{j,m} \right)}, \label{eq:aux_gm} \\
    \dot{\kappa}_{i,j} &= -\sumZxi{m}{\beta_m \left( \kappa_{i,m} g_{j,m} - k_{i,m} \gamma_{j,m} \right)}, \label{eq:aux_kp}
  \end{align}
\end{subequations}
where we have momentarily redefined $R$ and $Q$ as
\begin{equation*}
  \begin{bmatrix}
    R\\Q
  \end{bmatrix}
  =
  \begin{bmatrix}
    \gamma & k \\
    g&\kappa
  \end{bmatrix} \ast
  \begin{bmatrix}
    U\beta\\
    h +  \rho_{\infty} \bar{r} 
  \end{bmatrix}.
\end{equation*}
The evolution equations\eqref{eq:aux_b}, \eqref{eq:aux_h}, and the second equation of \eqref{eq:aux_a}
have been obtained formally by applying $\D_+$ to \eqref{eq:ODE_disc_z},
\eqref{eq:ODE_disc_U} and \eqref{eq:ODE_disc_H}, in combination with \eqref{eq:discRQrel}. We collect all
the variables in a tuple
\begin{equation*}
  Y = \left( \zeta, U, H, r, \alpha, \beta, h, g, k, \gamma, \kappa \right) \in \linfty \times \left( \ltwo \cap \linfty \right) \times \linfty \times (\ltwo)^4 \times (\lstar)^4 \eqqcolon \Ea
\end{equation*}
and introduce the corresponding norm
\begin{align*}
  \normEa{Y} &\coloneqq \normlinfty{\zeta} + \normltwo{U} + \normlinfty{U} + \normlinfty{H} + \normltwo{r} + \normltwo{\alpha} + \normltwo{\beta} + \normltwo{h} \\
             &\quad+ \normlstar{g} + \normlstar{k} + \normlstar{\gamma} + \normlstar{\kappa}.
\end{align*}

Note how we require $U \in \linfty$ to account for the fact that the decoupling of $U$ and $\D_+U$ deprives us of the continuous
inclusion $\hone \subset \linfty$.

\begin{lem}[Short-time solution for \eqref{eq:aux}]
  Let $Y_0 \in \Ea$ be such that $1 + \alpha_j \ge 0$ for all $j$, and with initial auxiliary variables $g_0, k_0, \gamma_0,
  \kappa_0$ constructed according to Theorem \ref{thm:green_g} and Corollary \ref{cor:green_k} with $a_j = 1 + \alpha_j$. Then,
  there exists a time $T>0$ depending only on $\normEa{Y_0}$ such that \eqref{eq:aux} has a unique solution $Y \in
  \mathbf{C}^1([0,T],\Ea)$ with initial data $Y_0$.
  \label{lem:auxLocal}
\end{lem}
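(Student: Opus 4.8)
The plan is to recast the auxiliary system \eqref{eq:aux} as an abstract ordinary differential equation $\dot Y = F(Y)$ on the Banach space $\Ea$ and to invoke the Picard--Lindel\"of theorem; the whole task then reduces to verifying that $F$ maps $\Ea$ into $\Ea$ and is locally Lipschitz on bounded sets. Since every component of $F$ is a finite sum of multilinear operations --- pointwise products, the finite differences $\D_\pm$, and the kernel contraction $\ast$ --- it suffices to bound each such building block in the appropriate norm; local Lipschitz continuity on balls is then automatic, because a bounded $n$-linear map $B$ obeys an estimate of the form $\normEa{B(Y) - B(\tilde Y)} \le C\,(\normEa{Y} + \normEa{\tilde Y})^{n-1}\normEa{Y - \tilde Y}$. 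As a preliminary, I would note that the hypotheses place $Y_0$ in $\Ea$: the assumption $1+\alpha_j \ge 0$ allows us to construct $g_0, k_0, \gamma_0, \kappa_0$ via Theorem \ref{thm:green_g} and Corollary \ref{cor:green_k} with $a_j = 1+\alpha_j$, and their exponential decay places them in $\lstar = \lone \cap \linfty$.

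The heart of the boundedness argument is to control $R$ and $Q$, defined in \eqref{eq:aux} by $R = \gamma\ast(U\beta) + k\ast(h+\rho_\infty\bar r)$ and $Q = g\ast(U\beta) + \kappa\ast(h+\rho_\infty\bar r)$. Since $U \in \linfty$ and $\beta, h, \bar r \in \ltwo$, the two source sequences $U\beta$ and $h+\rho_\infty\bar r$ lie in $\ltwo$, and Young's inequality (Proposition \ref{prp:young}) applied with the exponent triples $(p,q,r)=(2,1,2)$ and $(p,q,r)=(2,2,\infty)$ yields $R, Q \in \ltwo \cap \linfty$ with norms bounded by the kernel $\lstar$-norms, where \eqref{eq:l2l1linf} is used to pass from $\lone\cap\linfty$ to the $\ltwo$-norms of the kernels. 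Once $R$ and $Q$ are controlled in $\ltwo \cap \linfty$, the equations for $\zeta, U, H, r, \alpha, \beta, h$ involve only pointwise products of these quantities with $U \in \linfty$ and $\alpha, \beta \in \ltwo$, so H\"older's inequality together with the Banach-algebra structure of $\linfty$ delivers the required bounds in $\linfty$, $\ltwo\cap\linfty$, $\linfty$ and $\ltwo$, respectively.

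The main obstacle is the kernel block \eqref{eq:aux_g}--\eqref{eq:aux_kp}, whose right-hand sides are the genuinely new quadratic-in-the-kernel terms such as $\sumZxi{m}{\beta_m(g_{i,m}g_{j,m} - \gamma_{i,m}\gamma_{j,m})}$ and which must be estimated in the operator norm $\normslstar{\cdot} = \normslone{\cdot} + \normslinfty{\cdot}$. For the $\linfty$-part, a Cauchy--Schwarz estimate in $m$ bounds a typical term by $\normltwo{\beta}\,\normlinfty{g}\,\normltwo{g}$; for the $\lone$-part, summing in $j$ first and using Fubini bounds it by $\normltwo{\beta}\,\normltwo{g}$ times a column sum of $g$, which is exactly the type of expression Proposition \ref{prp:young} is tailored to handle. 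The deliberately symmetric form chosen for \eqref{eq:aux_g}--\eqref{eq:aux_kp}, in which every kernel carries the summation index $m$ in its second slot, is what keeps these estimates closed within $\lstar$ (it is also what will later preserve the symmetry relations); this is the step where the choice of the kernel space $\lstar$ and Young's inequality are indispensable.

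With $F \colon \Ea \to \Ea$ shown to be bounded and Lipschitz on every ball $\overline{B}(0,\Lambda) \subset \Ea$ with a constant $L(\Lambda)$, I would conclude in the standard way. Fixing $\Lambda = 2\normEa{Y_0}$, one chooses $T>0$ small enough --- depending only on $\Lambda$, hence only on $\normEa{Y_0}$ --- so that the Picard map $Y \mapsto Y_0 + \int_0^{\,\cdot} F(Y(s))\,ds$ is a contraction on the closed ball of radius $\Lambda$ in $\mathbf{C}([0,T],\Ea)$. The Banach fixed-point theorem then yields a unique fixed point, which is the sought solution; and since $t \mapsto F(Y(t))$ is continuous, the solution is in fact $\mathbf{C}^1([0,T],\Ea)$, with $T$ depending only on $\normEa{Y_0}$ as claimed.
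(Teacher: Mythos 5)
Your overall strategy is the same as the paper's: write \eqref{eq:aux} as $\dot Y = F(Y)$ on $\Ea$, prove local Lipschitz bounds for $F$ using Proposition \ref{prp:young} and Cauchy--Schwarz, and conclude by Picard iteration. However, there is one genuine gap, and it is precisely the non-routine point of this proof: the transpose norms. Young's inequality \eqref{eq:altYoung} bounds $\norml{r}{g\ast f}$ by $\norml{q}{g}^{q/r}\norml{q}{g^\tp}^{1-q/r}\norml{p}{f}$, and the norms in \eqref{eq:fund_norms} that define $\lstar$ only control row sums, $\sup_i\sum_j\dxi\abs{g_{i,j}}$, not the column sums $\sup_j\sum_i\dxi\abs{g_{i,j}} = \normlone{g^\tp}$. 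So your claim that $R$ and $Q$ are ``bounded by the kernel $\lstar$-norms'' via the exponent triples $(2,1,2)$ and $(2,2,\infty)$ does not follow from $Y\in\Ea$ alone: both triples produce a factor involving $g^\tp$ or $\kappa^\tp$. The same problem appears in your $\lone$-estimate of the kernel block, where the ``column sum of $g$'' you obtain after Fubini is exactly $\normlone{g^\tp}$; keeping the summation index $m$ in the second slot closes the $\linfty$-estimate but not the $\lone$-estimate.

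The paper resolves this by exploiting the identities \eqref{eq:symopgk} and \eqref{eq:relgk}, i.e.\ $g^\tp=g$, $k^\tp=k$, $\kappa^\tp=-\gamma$, $\gamma^\tp=-\kappa$: these hold for the initial kernels by construction, one checks that the closed subset of $\Ea$ on which they hold is invariant under the Picard fixed-point operator (this is where the symmetric form of \eqref{eq:aux_g}--\eqref{eq:aux_kp} is actually used), and the contraction argument is run on that subset, with every transpose norm replaced by the norm of another kernel already present in the tuple. You gesture at this when you remark that the symmetric form ``will later preserve the symmetry relations,'' but you never feed the symmetry back into the estimates, so as written $F$ is not shown to be bounded or Lipschitz on all of $\Ea$, only on the symmetric subset --- and that restriction, together with its invariance, needs to be stated and verified for the fixed-point argument to close.
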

\begin{proof}[Proof of Lemma \ref{lem:auxLocal}]
  We are going to use the symmetry and anti-symmetry identities
  \eqref{eq:symopgk} and \eqref{eq:relgk} in our estimates and we explain now
  why it can be done. First, we note that these identities hold initially by the
  construction of \eqref{eq:g} and \eqref{eq:k}. Then, from the evolution
  equations \eqref{eq:aux_g}--\eqref{eq:aux_kp} one can check that the symmetry
  identities are preserved by the Picard fixed-point operator which we will use here to
  prove the short-time existence of \eqref{eq:aux}. Then, by establishing local Lipschitz
  regularity of the right-hand side, we can prove
  the existence of a short-time solution in the closed subset of $\Ea$ where
  \eqref{eq:symopgk} and \eqref{eq:relgk} hold.
  
  Let us consider two functions in $\Ea$,
  \begin{equation*}
    Y = \left( \zeta, U, H, r, \alpha, \beta, h, g, k, \gamma, \kappa \right)\quad\text{ and }\quad
    \tilde{Y} = \left( \tilde{\zeta}, \tilde{U}, \tilde{H}, \tilde{r}, \tilde{\alpha}, \tilde{\beta}, \tilde{h}, \tilde{g}, \tilde{k}, \tilde{\gamma}, \tilde{\kappa} \right).
  \end{equation*}
  For the Lipschitz estimates, we first treat the right-hand sides of
  \eqref{eq:aux_g}--\eqref{eq:aux_kp}. We only provide details for
  \eqref{eq:aux_g} as  \eqref{eq:aux_k}--\eqref{eq:aux_kp} can be treated similarly.
  
  We start by considering the $\linfty$-norm using the following splitting,
  \begin{multline*}
     \left| -\sumZxi{m}{\beta_m \left( g_{j,m} g_{i,m} - \gamma_{i,m} \gamma_{j,m} \right) } \right. 
    + \left. \sumZxi{m}{\tilde{\beta}_m \left( \tilde{g}_{j,m} \tilde{g}_{i,m} - \tilde{\gamma}_{i,m} \tilde{\gamma}_{j,m} \right)} \right| \\
    \le \left| \sumZxi{m}{\beta_m g_{j,m} g_{i,m} } - \sumZxi{m}{\tilde{\beta}_m \tilde{g}_{j,m} \tilde{g}_{i,m} } \right| \\
    + \left| \sumZxi{m}{\beta_m \gamma_{i,m} \gamma_{j,m} } - \sumZxi{m}{\tilde{\beta}_m \tilde{\gamma}_{i,m} \tilde{\gamma}_{j,m} } \right|
  \end{multline*}
  We estimate the first term as follows
  \begin{multline*}
    \left| \sumZxi{m}{\beta_m g_{j,m} g_{i,m} } - \sumZxi{m}{\tilde{\beta}_m \tilde{g}_{j,m} \tilde{g}_{i,m} } \right| \\
    \leq \normlinfty{g} \normltwo{g} \normsltwo{\beta - \tilde{\beta}} + \normlinfty{g} \normsltwo{\tilde{\beta}} \normltwo{g -
      \tilde{g}} + \normsltwo{\tilde{\beta}} \normltwo{\tilde{g}} \normlinfty{g-\tilde{g}}
  \end{multline*}
  and the second term has a similar estimate.  For the $\lone$-norm, use the same splitting and consider again only the first
  term. We make use of the symmetry properties of the kernel operators, as given in Lemma \eqref{lem:fundsolprops}, to switch
  between indices and obtain
  \begin{multline*}
    \sumZxi{i}{ \left| \sumZxi{m}{\beta_m g_{j,m} g_{i,m} } - \sumZxi{m}{\tilde{\beta}_m \tilde{g}_{j,m} \tilde{g}_{i,m} } \right| } \\
    \le \normlone{g} \normltwo{g} \normsltwo{\beta - \tilde{\beta}} + \normlone{g} \normsltwo{\tilde{\beta}}
    \normltwo{g-\tilde{g}} + \normsltwo{\tilde{\beta}} \normltwo{\tilde{g}} \normlone{g-\tilde{g}},
  \end{multline*}
  From \eqref{eq:l2l1linf} we can then conclude
  that the right-hand side in \eqref{eq:aux_g} is locally Lipschitz-continuous with respect to the $\Ea$-norm.
  
  Let us consider Lipschitz properties of $R$ and $Q$. We decompose $Q$ in $Q_1 + Q_2 $ where
  \begin{subequations}
    \begin{align}
      \label{eq:Q1} 
      (Q_1)_j &\coloneqq \sumZxi{i}{g_{i,j} U_i (\D_+U_i)},\\
      \label{eq:Q2}
      (Q_2)_j &\coloneqq \sumZxi{i}{\kappa_{i,j} \left( h_i + \rho_\infty\bar{r}_i \right)}.
    \end{align}
  \end{subequations}
  Similarly, we decompose $R$ in $R_1 + R_2 $ where
  \begin{subequations}
    \begin{align}
      \label{eq:R1} 
      (R_1)_j &\coloneqq \sumZxi{i}{\gamma_{i,j} U_i (\D_+U_i)},\\
      \label{eq:R2}
      (R_2)_j &\coloneqq \sumZxi{i}{k_{i,j} \left(h_i + \rho_\infty\bar{r}_i\right)}.
    \end{align}
  \end{subequations}
  We have $Q_2 = \kappa\ast f$ for $f= h + \rho_{\infty} r$ so that
  \begin{equation*}
    \normsltwo{f} = \normsltwo{h +  \rho_{\infty} r} \le \normsltwo{h} + \rho_{\infty} \normsltwo{r}.
  \end{equation*}
  Starting with $Q_2$, we have
  \begin{equation*}
    \normsltwo{Q_2-\tilde{Q}_2} = \normsltwo{\kappa\ast f - \tilde\kappa\ast \tilde f} 
    \le \normsltwo{(\kappa - \tilde \kappa)\ast f} +  \normsltwo{\tilde\kappa\ast (f - \tilde f)}
  \end{equation*}
  For the first term above, applying the Young's inequality \eqref{eq:altYoung} with $r=p=2$ and $q=1$, we get
  \begin{equation*}
    \normsltwo{(\kappa - \tilde \kappa)\ast f} \leq \normslone{\kappa - \tilde \kappa}^{\tfrac12}\normslone{(\kappa - \tilde \kappa)^\tp}^{\tfrac12}\normsltwo{f}
  \end{equation*}
  Using the antisymmetry property \eqref{eq:relgk} of $\kappa$ and $\tilde\kappa$, namely $\kappa^\tp
  = -\gamma$ and $\tilde\kappa^\tp = -\tilde\gamma$, we get
  \begin{equation*}
    \normsltwo{(\kappa - \tilde \kappa)\ast f} \leq \normslone{\kappa - \tilde \kappa}^{\tfrac12}\normslone{\gamma - \tilde \gamma}^{\tfrac12}\normsltwo{f}
  \end{equation*}
  Hence, we obtain the following estimate in $\ltwo$-norm,
  \begin{equation*}
    \normsltwo{Q_2-\tilde{Q}_2} \leq \frac{\normslone{\gamma-\tilde{\gamma}} + \normslone{\kappa-\tilde{\kappa}}}{2} \normsltwo{f} + \frac{\normslone{\tilde{\gamma}} + \normslone{\tilde{\kappa}}}{2} \normsltwo{f - \tilde f}.
  \end{equation*}
  For the $\linfty$-norm, we use the same splitting
  \begin{equation*}
    \normslinfty{Q_2-\tilde{Q}_2} \leq \normslinfty{(\kappa - \tilde \kappa)\ast f} +  \normslinfty{\kappa\ast (f - \tilde f)}.
  \end{equation*}
  Applying \eqref{eq:altYoung} for $r=\infty$ and $p=q=2$,
  and the symmetry property of $\kappa$, we obtain in a similar way as before
  that
  \begin{equation*}
    \normslinfty{Q_2-\tilde{Q}_2} \leq \normsltwo{\gamma - \tilde{\gamma}}\normsltwo{f} +
    \normsltwo{\tilde{\gamma}}\normsltwo{f - \tilde f}.
  \end{equation*}
  In a similar fashion as for $Q_2$ we find
  \begin{align*}
    &\normsltwo{R_2-\tilde{R}_2} \le \normslone{ k - \tilde{k} } \normltwo{f} + \normslone{ \tilde{k} } \normsltwo{f - \tilde{f}}, \\
    &\normslinfty{R_2-\tilde{R}_2} \le \normsltwo{k-\tilde{k}} \normltwo{f} + \normsltwo{\tilde{k}}\normsltwo{f-\tilde{f}}.
  \end{align*}
  Furthermore, analogous applications of \eqref{eq:altYoung} and
  \eqref{eq:relgk} produce
  \begin{align*}
    &\normsltwo{Q_1 - \tilde{Q}_1} \le \normsltwo{g - \tilde{g}} \normlone{U \beta} + \normsltwo{\tilde{g}} \normslone{U\beta - \tilde{U}\tilde{\beta}}, \\
    &\normslinfty{Q_1 - \tilde{Q}_1} \le \normslinfty{g - \tilde{g}} \normlone{U \beta} + \normslinfty{\tilde{g}} \normslone{U\beta - \tilde{U}\tilde{\beta}}, \\
    &\normsltwo{R_1 - \tilde{R}_1} \le \normsltwo{\gamma - \tilde{\gamma}} \normlone{U \beta} + \normsltwo{\tilde{\gamma}} \normslone{U\beta - \tilde{U}\tilde{\beta}}, \\
    &\normslinfty{R_1 - \tilde{R}_1} \le \normslinfty{\gamma - \tilde{\gamma}} \normlone{U \beta} + \normslinfty{\tilde{\gamma}} \normslone{U\beta - \tilde{U}\tilde{\beta}}.
  \end{align*}
  For the $\lone$-norms above we then apply the Cauchy--Schwarz inequality to obtain
  \begin{equation*}
    \normlone{U \beta} \le \normltwo{U} \normltwo{\beta}, \quad \normslone{U\beta - \tilde{U}\tilde{\beta}} \le \normltwo{U} \normsltwo{\beta-\tilde{\beta}} + \normsltwo{\tilde{U}} \normsltwo{\beta - \tilde{\beta}},
  \end{equation*}
  which contain the relevant norms.
  
  From the preceding estimates on $Q_1$ and $Q_2$ the local Lipschitz property of the right-hand side of the second equation in
  \eqref{eq:aux_z} in the $\ltwo \cap \linfty$-norm is clear. Furthermore, since $U \in \linfty$, the previous $\linfty$-estimates
  on $R$ and $Q$ also show that the right-hand sides of \eqref{eq:aux_b} and \eqref{eq:aux_h} are locally Lipschitz in the
  $\ltwo$-norm.  For the last equation in \eqref{eq:aux_z}, we introduce the right-shift operator $(\uptau R)_j = R_{j-1}$ and we
  have
  \begin{align*}
    \normslinfty{U (\uptau R) - \tilde{U}(\uptau \tilde{R}) } &\le \normslinfty{U - \tilde{U}}\normlinfty{\uptau R} + \normslinfty{\tilde{U}}\normslinfty{\uptau( R-\tilde{R})} \\
                                                              &\le \normslinfty{U - \tilde{U}}\normlinfty{R} + \normslinfty{\tilde{U}}\normslinfty{R-\tilde{R}},
  \end{align*}
  The remaining right-hand sides of \eqref{eq:aux_z} and \eqref{eq:aux_a} are linear in the solution variables, and thus Lipschitz
  in their respective norms.  Hence, for \eqref{eq:aux} written as $\dot{Y} = \hat{F}(Y)$ we have
  \begin{equation*}
    \|\hat{F}(Y) - \hat{F}(\tilde{Y})\|_{\Ea} \le C(\|Y\|_{\Ea}, \|\tilde{Y}\|_{\Ea}) \|Y-\tilde{Y}\|_{\Ea},
  \end{equation*}
  which is what we set out to prove.
\end{proof}

The final step in obtaining short-time existence for \eqref{eq:ODE_disc} from the auxiliary system,
is to show that if the initial data for \eqref{eq:aux} satisfy
\begin{subequations}
  \begin{equation}
    \label{eq:presidggkk}
    \begin{bmatrix}
      -\D_{j-} & (1 + \alpha_j) \\
      (1 + \alpha_j) & -\D_{j+}
    \end{bmatrix}
    \circ
    \begin{bmatrix}
      \gamma_{i,j}&k_{i,j}\\
      g_{i,j}&\kappa_{i,j}\\
    \end{bmatrix}
    = \frac{1}{\dxi}
    \begin{bmatrix}
      \delta_{i,j}& 0\\
      0& \delta_{i,j}
    \end{bmatrix}
  \end{equation}
  \begin{equation}
    \label{eq:presdiscder}
    \alpha = \D_+\zeta, \quad \beta = \D_+U, \text{ and } \quad h = \D_+H,
  \end{equation}
\end{subequations}
then these identities are preserved in time by the solution. The result for \eqref{eq:presidggkk} has been proved in Lemma
\ref{lem:fundsolprops}, as it only depends on the identity $(\D_+y)_t = \D_+U$, which is replaced here by $\dot{\alpha} =
\beta$. Using \eqref{eq:presidggkk}, we infer from \eqref{eq:discRQrel} that
\begin{equation}
  \label{eq:dqalpha}
  \begin{bmatrix}
    -\D_{-} & (1 + \alpha_j) \\
    (1 + \alpha_j) & -\D_{+}
  \end{bmatrix} \circ
  \begin{bmatrix}
    R_j \\
    Q_j
  \end{bmatrix}
  =
  \begin{bmatrix}
    U_j \beta_j \\
    h_j +  \rho_{\infty} \bar{r}_j
  \end{bmatrix}.
\end{equation}
From the definition of \eqref{eq:aux} we get
\begin{subequations}
  \label{eq:smallauxode}
  \begin{equation}
    \frac{d}{dt}(\alpha_j - \D_+\zeta_j) = \beta_j - \D_+U_j,
  \end{equation}
  while the expression for $\D_+Q_j$ from \eqref{eq:dqalpha} yields
  \begin{equation}
    \frac{d}{dt}(\beta_j - \D_+U_j) = 0,
  \end{equation}
  and from the expression for $\D_-R_j$ we obtain
  \begin{equation}
    \diff{t}{}(h_j - \D_+H_j) = -R_j (\beta_j - \D_+U_j).
  \end{equation}
\end{subequations}
Hence, the equations \eqref{eq:smallauxode} give us that \eqref{eq:presdiscder}
holds for all time if it holds initially.
Then we have proved the following theorem.
\begin{thm}[Short-time solution for \eqref{eq:ODE_disc}]
  \label{thm:solLocal}
  Given $X_0 \in \Ek$ such that $1 + \D_+\zeta_j \ge 0$ and $g_0$, $k_0$, $\gamma_0$, and $\kappa_0$ are constructed according to
  Theorem \ref{thm:green_g} and Corollary \ref{cor:green_k} with $a_j = 1 + \D_+\zeta_j$. Then, there exists a time $T$ depending
  only on $\normEk{X_0}$ such that \eqref{eq:ODE_disc} has a unique solution $X \in C^1([0,T],\Ek)$ with initial datum $X_0$.
\end{thm}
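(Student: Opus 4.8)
The plan is to obtain Theorem \ref{thm:solLocal} as essentially a packaging of the short-time existence for the auxiliary system in Lemma \ref{lem:auxLocal}, together with the preservation of the constraints \eqref{eq:presidggkk} and \eqref{eq:presdiscder} established in the discussion preceding the theorem. The key observation is that, whenever \eqref{eq:presdiscder} holds, the auxiliary variables $\alpha$, $\beta$, $h$ coincide with $\D_+\zeta$, $\D_+U$, $\D_+H$, and the redefined $R,Q$ of \eqref{eq:aux} collapse to those of \eqref{eq:ODE_disc}; hence a solution of \eqref{eq:aux} subject to these constraints is precisely a solution of \eqref{eq:ODE_disc}.

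First I would lift the initial datum $X_0 = (\zeta_0, U_0, H_0, \bar r_0, g_0, k_0, \gamma_0, \kappa_0) \in \Ek$ to an initial datum $Y_0 \in \Ea$ by setting $\alpha_0 = \D_+\zeta_0$, $\beta_0 = \D_+U_0$, $h_0 = \D_+H_0$, and $r_0 = \bar r_0 + \rho_\infty(1+\alpha_0)$, in accordance with the discrete analogue of $r = \bar r + \rho_\infty y_\xi$. Since $\zeta_0, H_0 \in \Vd$ and $U_0 \in \hone$, these discrete derivatives lie in $\ltwo$ and $U_0 \in \linfty$ by \eqref{eq:sobolev_l}, so that $Y_0 \in \Ea$ with $\normEa{Y_0}$ controlled by $\normEk{X_0}$. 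The hypothesis $1 + \D_+\zeta_j \ge 0$ gives $1 + \alpha_j \ge 0$, so the assumptions of Lemma \ref{lem:auxLocal} are met, and the $g_0,k_0,\gamma_0,\kappa_0$ constructed from $a_j = 1+\D_+\zeta_j$ are exactly those required there. We thus obtain a unique $Y \in \mathbf{C}^1([0,T],\Ea)$ solving \eqref{eq:aux} on some interval $[0,T]$ with $T$ depending only on $\normEa{Y_0}$, hence only on $\normEk{X_0}$.

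Next I would invoke the constraint-preservation already proved: by Lemma \ref{lem:fundsolprops}, with $\dot\alpha = \beta$ playing the role of $(\D_+y)_t = \D_+U$, the Green's-function identities \eqref{eq:presidggkk} persist on $[0,T]$; and by the system \eqref{eq:smallauxode}, the decoupling identities \eqref{eq:presdiscder} persist, since they hold at $t=0$ by construction of $Y_0$. With \eqref{eq:presdiscder} in force, $\alpha = \D_+\zeta$, $\beta = \D_+U$, $h = \D_+H$ throughout $[0,T]$, so that $\zeta, H \in \Vd$ and $U \in \hone$ for each $t$, and \eqref{eq:presidggkk} becomes exactly \eqref{eq:discggkkid} with $a_j = \D_+y_j$. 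Consequently $X = (\zeta, U, H, \bar r, g, k, \gamma, \kappa)$, with $\bar r = r - \rho_\infty(1+\alpha)$, lies in $\mathbf{C}^1([0,T],\Ek)$ and, on substituting these identities into \eqref{eq:aux}, satisfies \eqref{eq:ODE_disc}.

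Finally, for uniqueness in $\Ek$, I would run the argument in reverse: any solution $X$ of \eqref{eq:ODE_disc} in $\mathbf{C}^1([0,T],\Ek)$ yields a solution $Y$ of \eqref{eq:aux} by defining $\alpha = \D_+\zeta$, $\beta = \D_+U$, $h = \D_+H$ and $r = \bar r + \rho_\infty(1+\alpha)$, which automatically satisfies \eqref{eq:presdiscder}; the uniqueness part of Lemma \ref{lem:auxLocal} then forces any two such solutions to agree. The only genuinely delicate point is bookkeeping rather than analysis: one must check that the lift $X_0 \mapsto Y_0$ and its inverse $Y \mapsto X$ restrict to a continuous bijection between the constrained subsets of $\Ek$ and $\Ea$, so that the norm bounds and the $\mathbf{C}^1$ regularity transfer without loss. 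All the hard estimates --- the Lipschitz bounds obtained via the Young-type inequality of Proposition \ref{prp:young} and the Gr\"onwall arguments underlying constraint preservation --- have already been carried out, so no new inequalities are needed here.
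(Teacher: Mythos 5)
Your proposal follows essentially the same route as the paper: Theorem \ref{thm:solLocal} is obtained by lifting $X_0$ to the auxiliary system \eqref{eq:aux}, applying Lemma \ref{lem:auxLocal}, and then using Lemma \ref{lem:fundsolprops} together with \eqref{eq:smallauxode} to show that the constraints \eqref{eq:presidggkk} and \eqref{eq:presdiscder} persist in time, so that the auxiliary solution collapses to a (unique) solution of \eqref{eq:ODE_disc}. The one correction needed is in the lift of the density variable: the auxiliary variable $r$ in \eqref{eq:aux} evolves by $\dot r_j=-\rho_\infty\beta_j$ and is required to lie in $\ltwo$, so it plays the role of $\bar r$ and you should set $r_0=\bar r_0$ rather than $r_0=\bar r_0+\rho_\infty(1+\alpha_0)$ --- the latter fails to be square-summable when $\rho_\infty>0$, and combined with the recovery $\bar r=r-\rho_\infty(1+\alpha)$ it would produce twice the correct drift $-\rho_\infty\int_0^t\beta\,ds$ in $\bar r$.
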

The next step is to prove that there exists a subset, denoted by $\Bcal$, of $\Ek$ which is preserved by the evolution equation. For
this subset, the solution exists globally in time. The subset $\Bcal$ is defined as follows.
\begin{dfn}\label{def:setB}
  The set $\Bcal$ is composed of all $\left(\zeta, U, H, \bar{r}, g, k, \gamma, \kappa \right) \in \Ek$ such that
  \begin{enumerate}[(a)]
  \item $g, k, \gamma, \kappa$ satisfy the properties listed in Lemma \ref{lem:fundsolprops} for $a = \D_+y$, \label{setB:kern}
  \item $(\D_+y, \D_+U, \D_+H, \bar{r}) \in (\linfty)^4$, \label{setB:linf}
  \item $2(\D_+y_j)(\D_+H_j) = (U_j)^2(\D_+y_j)^2 + (\D_+U_j)^2 +  \bar{r}_j^2$ for all $j$, \label{setB:id}
  \item $\D_+y_{j} \ge 0$, $\D_+H_{j} \ge 0$, $\D_+y_{j} + \D_+H_{j} > 0$ for all $j$. \label{setB:pos}
  \end{enumerate}
\end{dfn}

\begin{lem}[Properties preserved by the flow]\label{lem:solInSet}
  Given initial datum $X_0 \in \Bcal$, let $X(t) \in C^1([0,T],\Ek)$ be the corresponding short-time solution given by Theorem
  \ref{thm:solLocal}. Then $X(t) \in \Bcal$ for all $t \in [0,T]$.
\end{lem}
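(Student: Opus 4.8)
The set $\Bcal$ is defined by the four conditions (a)--(d) of Definition \ref{def:setB}, and the plan is to verify that each is propagated by the flow of \eqref{eq:ODE_disc}, treating them in an order that lets the earlier conditions feed the later arguments. Condition (b) is immediate: since $X(t) \in \Ek = \Ed \times (\lstar)^4$ for every $t$ by Theorem \ref{thm:solLocal}, we have $\D_+\zeta, \D_+U, \D_+H \in \ltwo$ and $\bar{r} \in \ltwo$, and because $\ltwo \subset \linfty$ together with $\D_+y_j = 1 + \D_+\zeta_j$, all four sequences $\D_+y, \D_+U, \D_+H, \bar{r}$ lie in $\linfty$. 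Condition (a) is exactly the conclusion of Lemma \ref{lem:fundsolprops}: its hypotheses hold on $[0,T]$ because $\dot{\zeta}_j = U_j$ gives $(\D_+y_j)_t = \D_+U_j$, while $g,k,\gamma,\kappa \in \lstar \subset \ltwo$ and $\D_+U \in \ltwo$ are bounded on $[0,T]$ by continuity of $X(\cdot)$ in $\Ek$. The crucial consequence is that the Green's function identities \eqref{eq:discggkkid} hold for all $t \in [0,T]$, so that the algebraic relation \eqref{eq:discRQrel} between $(R,Q)$ and the solution variables is available throughout; I would establish (a) first precisely so that \eqref{eq:discRQrel} may be used freely in the two remaining steps.

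For condition (c), I would introduce the pointwise defect
\[
  E_j := 2(\D_+y_j)(\D_+H_j) - (U_j)^2(\D_+y_j)^2 - (\D_+U_j)^2 - \bar{r}_j^2,
\]
which vanishes at $t=0$ since $X_0 \in \Bcal$, and show $\dot{E}_j \equiv 0$. Differentiating with the governing equations --- $(\D_+y_j)_t = \D_+U_j$, $(\D_+U_j)_t = -\D_+Q_j$, $(\D_+H_j)_t = -\D_+(U_j R_{j-1})$, and $\dot{\bar{r}}_j = -\rho_\infty \D_+U_j$ --- then using the product rule \eqref{eq:FD_prod} to expand $\D_+(U_j R_{j-1}) = (\D_+U_j)R_j + U_j\,\D_-R_j$, and finally replacing $\D_+Q_j$ and $\D_-R_j$ by their expressions from \eqref{eq:discRQrel}, I expect every resulting term to cancel in pairs, so that $\dot{E}_j = 0$. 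Since $E_j(0)=0$, condition (c) then holds on all of $[0,T]$; in particular $2(\D_+y_j)(\D_+H_j) = (U_j)^2(\D_+y_j)^2 + (\D_+U_j)^2 + \bar{r}_j^2 \ge 0$, hence $(\D_+y_j)(\D_+H_j) \ge 0$ throughout.

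Condition (d) is the main obstacle, and I would treat it last, exploiting (c). Set $w_j := (\D_+y_j, \D_+U_j, \D_+H_j, \bar{r}_j)$. Using \eqref{eq:discRQrel} to eliminate $\D_+Q_j$ and $\D_-R_j$ exactly as above, the four evolution equations close into a linear homogeneous system $\dot{w}_j = M_j(t)\,w_j$, where the entries of the $4\times 4$ matrix $M_j(t)$ are built from $U_j, Q_j, R_j$ and are continuous in $t$ on $[0,T]$ by $X \in C^1([0,T],\Ek)$. The decisive point is linearity: if $w_j(t_1) = 0$ for some $t_1$, uniqueness for the linear initial value problem forces $w_j \equiv 0$, contradicting $\D_+y_j(0)+\D_+H_j(0) > 0$; hence $w_j(t) \neq 0$ for all $t$. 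Now suppose $\D_+y_j(t_1)+\D_+H_j(t_1) = 0$ at some $t_1$. Together with $(\D_+y_j)(\D_+H_j) \ge 0$ from (c), this forces $\D_+y_j(t_1) = \D_+H_j(t_1) = 0$, and then (c) gives $(\D_+U_j(t_1))^2 + \bar{r}_j(t_1)^2 = 0$, i.e.\ $w_j(t_1)=0$ --- a contradiction. Therefore $\D_+y_j + \D_+H_j$ never vanishes; being positive at $t=0$ and continuous, it stays positive on $[0,T]$. Finally, $(\D_+y_j)(\D_+H_j)\ge 0$ with $\D_+y_j + \D_+H_j > 0$ forces $\D_+y_j \ge 0$ and $\D_+H_j \ge 0$, completing (d).

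The real difficulty is recognizing the linear structure underlying (d): once \eqref{eq:discRQrel} is invoked, the four ``derivative'' variables decouple from the nonlocal summation kernels into a genuine linear ODE at each index $j$, so that ODE uniqueness, rather than any delicate estimate, rules out simultaneous vanishing. The cancellation in (c) is lengthier but entirely mechanical, and (a)--(b) amount to bookkeeping using Lemma \ref{lem:fundsolprops} and the embeddings $\lstar \subset \ltwo \subset \linfty$.
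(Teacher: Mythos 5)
Your proof is correct, and it actually supplies the details that the paper outsources: the published proof disposes of (a) by invoking Lemma \ref{lem:fundsolprops} exactly as you do, and then simply cites \cite{Grunert2012} for (b) and (c) and \cite{holden2007global} for (d). Your arguments for (c) and (d) are precisely the standard ones from those references, and they do go through: I checked that with $\D_+Q_j = (\D_+y_j)R_j - \D_+H_j - \rho_\infty\bar r_j$ and $\D_-R_j = (\D_+y_j)Q_j - U_j(\D_+U_j)$ from \eqref{eq:discRQrel} (available on all of $[0,T]$ once (a) is secured), the derivative of your defect $E_j$ cancels identically, and the four difference variables do close into the linear system $\dot w_j = M_j(t)w_j$ with continuous bounded coefficients built from $U_j$, $Q_j$, $R_j$, so the ODE-uniqueness argument rules out simultaneous vanishing. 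The ordering (a) $\to$ (c) $\to$ (d) is also the right one, since both later steps lean on \eqref{eq:discRQrel}. The one place you diverge in character from the paper is (b): your argument via the fixed-$\dxi$ embedding $\ltwo \subset \linfty$ (with constant $\dxi^{-1/2}$) does prove the literal membership required by Definition \ref{def:setB}, but it is a shortcut that gives nothing uniform in $\dxi$; the cited \cite[Lem.~3.3]{Grunert2012} and the Gr\"onwall remark at the end of the proof of Theorem \ref{thm:solGlobal} instead propagate the initial $\linfty$-bounds with a factor $\exp\{C(\Htotal,\rho_\infty)t\}$, which is the quantitative version actually used later in the convergence analysis. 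For the statement of this lemma as written, your proof is complete.
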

\begin{proof}[Proof of Lemma \ref{lem:solInSet}]
  Property \eqref{setB:kern} follows from Lemma \ref{lem:fundsolprops}, since the solution variables in $X(t)$ satisfy
  $\D_+\dot{y}_j = \D_+U_j$ and $\D_+U \in \ltwo$, where we as usual have $\D_+y_j = 1 + \D_+\zeta_j$.  The proof of property
  \eqref{setB:linf} essentially follows \cite[Lem.~3.3]{Grunert2012}, and so we omit it.  The proof of \eqref{setB:id} is
  similar to the proof of \cite[Lem.~3.5]{Grunert2012}, while the proof of \eqref{setB:pos} is analogous to that of
  \cite[Lem.~2.7]{holden2007global}, and they are also omitted here.
\end{proof}
For the rest of the paper we will only consider $X \in \Bcal \cap \Ek$, as solutions in this set contains all the relevant
solutions to the original 2CH system \eqref{eq:2CH}. Lemma \ref{lem:solInSet}, and in particular the preservation of the identity
\begin{equation}
  \label{eq:Bid}
  2(\D_+y_j) h_j = U_j^2(\D_+y_j)^2 + (\D_{+}U_j)^2 + \bar{r}_j^2,
\end{equation}
allows us to prove useful estimates for the solutions in $\Bcal$.  We have
\begin{equation}
  \label{eq:UDUleH} \sumZxi{j}{\abs{U_j}\abs{\D_+U_j}} \le \Htotal(t),
\end{equation}
where $\Htotal(t) = \lim_{n\to+\infty}H_n$ is the total energy of the discrete system. This quantity corresponds to $\Hcaldis$ in
\eqref{eq:ham_dis}. Indeed, the Hamiltonian \eqref{eq:HamiltonianDis} is conserved for $t \in [0,T]$, that is
$\Htotal(t) = \Htotal(0) < \infty$ for $t \in [0,T]$.  We denote the preserved total energy $\Htotal(t)$ by $\Htotal$. Turning
back to the inequality \eqref{eq:UDUleH}, it can be proved as follows,
\begin{align*}
  \sumZxi{j}{\abs{U_j}\abs{\D_+U_j}} &\le \sumZxi{j}{\abs{U_j} \sqrt{(\D_+y_j)[2h_j - U_j^2 (\D_+y_j)]}} \\
                                     &\le \frac12 \sumZxi{j}{U_j^2 (\D_+y_j)} + \frac12 \sumZxi{j}{ [2h_j - U_j^2 (\D_+y_j)] } \\
                                     &= \Htotal,
\end{align*}
where in the first inequality we have used \eqref{eq:Bid}, and in the second inequality we have used $\D_+y_j \ge 0$ together with the Cauchy--Schwarz inequality.
An immediate consequence of \eqref{eq:UDUleH} is that $\normlinfty{U}$ can be uniformly bounded by a constant depending only on $\Htotal$.
To show this, we add and subtract in \eqref{eq:FD_prod} to find the identity
\begin{equation*}
  \D_{\pm}(U_i)^2 = 2 U_i (\D_{\pm}U_i) \pm \dxi (\D_\pm U_i)^2.
\end{equation*}
Taking advantage of the decay of $U$ at infinity, we may then write
\begin{equation*}
  (U_j)^2 = -2\dxi \sum_{i=j}^{\infty}U_i (\D_+U_i) - (\dxi)^2 \sum_{i=j}^{\infty}(\D_+U_i)^2 \le 2 \sumZxi{i}{\abs{U_i} \abs{\D_+U_i}} \le 2 \Htotal,
\end{equation*}
from which the bound
\begin{equation}
  \sup_{0 \le t \le T}\normlinfty{U(t)} \le \sqrt{2\Htotal}
  \label{eq:bnd_U}
\end{equation}
follows.  From \eqref{eq:bnd_U} and \eqref{eq:ODE_disc_z}, we then obtain the estimate
\begin{equation}
  \normlinfty{\zeta(t)} \le \normlinfty{\zeta(0)} + \sqrt{2\Htotal}t.
  \label{eq:bnd_zeta}
\end{equation}
Another useful estimate coming from \eqref{eq:Bid} is
\begin{equation}
  \label{eq:rb_hDy}
  |\bar{r}_j| \le \sqrt{2(\D_+y_j)h_j}.
\end{equation}
Now that Lemma \ref{lem:solInSet} has established $\D_+y_j(t) \ge 0$ in
the short-time solution for $t \in [0,T]$, we can apply Lemma \ref{lem:gk_sgn}
with $a_j = \D_+y_j$. Indeed, the sequences $g$, $\gamma$, $k$, and $\kappa$
solve \eqref{eq:discggkkid} and belong to $\lstar$ for $t \in [0,T]$, and so they
correspond to the unique decaying solution.  These properties contained in
Lemmas \ref{lem:gk_sgn} and \ref{lem:fundsolprops} are essential to establish
the \textit{a priori} estimates contained in the next lemma.

\begin{lem}[A priori relations and inequalities for the kernels]\label{lem:kern_ineq}
  As a consequence of establishing the preservation of the summation kernels and their sign properties over time, we have the identities
  \begin{subequations}
    \label{eq:sum_id1}
    \begin{align}
      \label{eq:sum_id1a}
      \sumZxi{j}{(\D_+y_j) |\gamma_{i,j}|} &= \sumZxi{j}{\abs{\D_{j+}g_{i,j}}} = 2 \normlinfty{g}, \\
      \label{eq:sum_id1b}
      \sumZxi{j}{(\D_+y_j) |\kappa_{i,j}|} &= \sumZxi{i}{\abs{\D_{j-}k_{i,j}}} = 2 \normlinfty{k},
    \end{align}
  \end{subequations}
  as well as
  \begin{subequations}\label{eq:sum_id2}
    \begin{align}
      \label{eq:sum_id2a}
      \sumZxi{j}{(\D_+y_j) g_{i,j}} &= \sumZxi{j}{(\A{\D_+y}g_i)_j} = 1, \\
      \label{eq:sum_id2b}
      \sumZxi{i}{(\D_+y_j) k_{i,j}} &= \sumZxi{j}{(\B{\D_+y}k_i)_j} = 1,
    \end{align}
  \end{subequations}
  and the bounds
  \begin{equation}\label{eq:fml_bnds}
    \normlinfty{g}, \normlinfty{k}, \normlinfty{\gamma}, \normlinfty{\kappa} \le 1,
  \end{equation}
  \begin{align}\label{eq:fml_l1_1}
    \begin{aligned}
      \normlone{g} &\le 1 + 2\normlinfty{\zeta}, & \normlone{k} &\le 1 + 2\normlinfty{\zeta}, \\
      \normlone{\gamma} &\le 2 \left[ 1 + \normlinfty{\zeta} \right], & \normlone{\kappa} &\le 2 \left[ 1 + \normlinfty{\zeta} \right].
    \end{aligned}
  \end{align}
\end{lem}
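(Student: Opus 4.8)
The plan is to derive every assertion from two structural facts established earlier: the defining relations \eqref{eq:fundrelggkk} (equivalently \eqref{eq:discggkkid}) with $a_j=\D_+y_j\ge0$, and the sign/monotonicity properties of Lemma \ref{lem:gk_sgn}. I would prove the statements in the order (i) the sum identities \eqref{eq:sum_id1}, \eqref{eq:sum_id2}, then (ii) the sup bounds \eqref{eq:fml_bnds}, then (iii) the $\lone$ bounds \eqref{eq:fml_l1_1}, since each group feeds the next.

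For \eqref{eq:sum_id1a} I would read off $\D_{j+}g_{i,j}=(\D_+y_j)\gamma_{i,j}$ from the lower-left entry of \eqref{eq:fundrelggkk}; as $\D_+y_j\ge0$ this gives $|\D_{j+}g_{i,j}|=(\D_+y_j)|\gamma_{i,j}|$ pointwise, which is the first equality. The second is a total-variation computation: by Lemma \ref{lem:gk_sgn} the sequence $j\mapsto g_{i,j}$ increases up to its maximum $g_{i,i}$ and decreases afterwards, both tails vanishing, so $\sumZxi{j}{|\D_{j+}g_{i,j}|}=\sum_j|g_{i,j+1}-g_{i,j}|=2g_{i,i}=2\sup_j|g_{i,j}|$, matching the right-hand side; the analogous argument with $\D_{j-}k_{i,j}=(\D_+y_j)\kappa_{i,j}$ yields \eqref{eq:sum_id1b}. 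For \eqref{eq:sum_id2a} I would use the top-left entry $(\D_+y_j)g_{i,j}=\D_{j-}\gamma_{i,j}+\delta_{i,j}/\dxi$ and sum in $j$: the $\D_{j-}$ term telescopes to zero because $\gamma_{i,\cdot}$ decays, leaving $\sumZxi{j}{(\D_+y_j)g_{i,j}}=1$, and the middle expression equals this since the $\D_-$ part of $\A{\D_+y}g_i$ telescopes away as well. Identity \eqref{eq:sum_id2b} is identical with $\B{\D_+y}$ and $k$.

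For the sup bounds, the estimates for $\gamma$ and $\kappa$ are clean and I would do them first. From $\D_{j-}\gamma_{i,j}=(\D_+y_j)g_{i,j}\ge0$ for $j\ne i$, the sequence $\gamma_{i,\cdot}$ is nondecreasing off the diagonal; combined with $\sgn(\gamma_{i,j})=\sgn(i-j-1/2)$ this forces $\sup_j|\gamma_{i,j}|=\max\{\gamma_{i,i-1},-\gamma_{i,i}\}$. Evaluating \eqref{eq:fundrelggkk} at $j=i$ gives the jump identity $\gamma_{i,i-1}-\gamma_{i,i}=1-\dxi(\D_+y_i)g_{i,i}\le1$, and since $\gamma_{i,i-1}>0>\gamma_{i,i}$ both summands are dominated by this difference, so $\normlinfty{\gamma}\le1$; the same works for $\kappa$. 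The genuinely hard part is $\normlinfty{g},\normlinfty{k}\le1$, i.e.\ $g_{i,i}\le1$: every single-kernel manipulation I can write down (the quadratic identity $g_{i,i}=\sumZxi{j}{(\D_+y_j)(g_{i,j}^2+\gamma_{i,j}^2)}$, the mass identity \eqref{eq:sum_id2a}, the telescopes above) turns out to be \emph{exactly} equivalent to the jump identity, which only yields $\dxi(\D_+y_i)g_{i,i}\le1$ and never $g_{i,i}\le1$. I therefore expect to need the two-sided structure: write $g_{i,i}=g^+_ig^-_i/W(g^-,g^+)$ from the construction in Theorem \ref{thm:green_g} and track the ratios $\mu^\pm_i=\gamma^\pm_i/g^\pm_i$ of the one-sided decaying solutions, whose Riccati recursion together with the positivity and monotonicity of $g^\pm$ should produce the gap $\mu^+_i-\mu^-_i\ge1$. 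This is the step I expect to be the main obstacle, since it is the only place where a genuine inequality, rather than an identity, must enter.

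Finally, the $\lone$ bounds follow from \eqref{eq:sum_id2a} once $\normlinfty{g}\le1$ is in hand. Writing $\D_+y_j=1+\D_+\zeta_j$ in \eqref{eq:sum_id2a} gives $\sumZxi{j}{g_{i,j}}=1-\sumZxi{j}{(\D_+\zeta_j)g_{i,j}}$, and summation by parts \eqref{eq:sum_by_parts} (boundary terms vanishing since $g_{i,\cdot}$ decays while $\zeta$ is bounded) turns the last sum into $-\sumZxi{j}{\zeta_j\D_-g_{i,j}}$, bounded in absolute value by $\normlinfty{\zeta}\sumZxi{j}{|\D_-g_{i,j}|}=2\normlinfty{\zeta}g_{i,i}\le2\normlinfty{\zeta}$ by \eqref{eq:sum_id1a} and \eqref{eq:fml_bnds}. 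Taking the supremum over $i$ gives $\normlone{g}\le1+2\normlinfty{\zeta}$, and likewise for $k$. For $\gamma$ and $\kappa$ I would split $\sumZxi{j}{|\gamma_{i,j}|}=\sumZxi{j}{(\D_+y_j)|\gamma_{i,j}|}-\sumZxi{j}{(\D_+\zeta_j)|\gamma_{i,j}|}=2g_{i,i}-\sumZxi{j}{(\D_+\zeta_j)|\gamma_{i,j}|}$ and control the last term by $\normlinfty{\zeta}$ times the total variation of $|\gamma_{i,\cdot}|$, which the jump identity and monotonicity bound by $2\max\{\gamma_{i,i-1},-\gamma_{i,i}\}\le2$; this gives $\normlone{\gamma},\normlone{\kappa}\le2(1+\normlinfty{\zeta})$.
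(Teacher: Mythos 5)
Your handling of the identities \eqref{eq:sum_id1} and \eqref{eq:sum_id2} (pointwise relations from \eqref{eq:fundrelggkk} plus monotonicity/telescoping), of the sup bounds for $\gamma$ and $\kappa$ (via the jump identity $\gamma_{i,i-1}-\gamma_{i,i}=1-\dxi(\D_+y_i)g_{i,i}$ rather than the paper's partial-sum representation, but equally valid), and of the $\lone$ bounds \emph{conditional on} \eqref{eq:fml_bnds} is all sound and close to the paper's. The genuine gap is exactly the one you flag yourself: the bound $\normlinfty{g},\normlinfty{k}\le 1$ is never proved. The Riccati/Wronskian route you sketch ("should produce the gap $\mu^+_i-\mu^-_i\ge1$") is not carried out, and since your $\lone$ estimates for $g$, $k$, $\gamma$, $\kappa$ all invoke $g_{i,i}\le 1$ (or $k_{i,i}\le1$), the hole propagates through the second half of the lemma.

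Moreover, your diagnosis that the quadratic identity is ``exactly equivalent to the jump identity'' and therefore cannot yield $g_{i,i}\le1$ is incorrect: this is precisely the identity the paper uses to close the step. From the symmetry $g_{i,j}=g_{j,i}$ and \eqref{eq:discggkkid} one gets $g_{i,i}=\sumZxi{m}{(\D_+y_m)[(g_{i,m})^2+(\gamma_{i,m})^2]}$; then telescoping $(g_{i,m})^2$ over the half-line $m\ge i$, where $\D_{m+}g_{i,m}=(\D_+y_m)\gamma_{i,m}$ with $\gamma_{i,m}\le 0$, gives
\begin{align*}
  (g_{i,i})^2&=\dxi\sum_{m=i}^{+\infty}\left[g_{i,m+1}+g_{i,m}\right](\D_+y_m)\abs{\gamma_{i,m}}\le 2\,\dxi\sum_{m=i}^{+\infty}g_{i,m}(\D_+y_m)\abs{\gamma_{i,m}}\\
  &\le\sumZxi{m}{(\D_+y_m)\left[(g_{i,m})^2+(\gamma_{i,m})^2\right]}=g_{i,i},
\end{align*}
where the first inequality uses the monotone decrease $g_{i,m+1}\le g_{i,m}$ for $m\ge i$ from Lemma \ref{lem:gk_sgn} and the second is $2ab\le a^2+b^2$. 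Hence $(g_{i,i})^2\le g_{i,i}$, so $g_{i,i}\le 1$, and $\normlinfty{g}\le1$ follows from $0\le g_{i,j}\le g_{i,i}$; the argument for $k$ is identical. So the ``genuine inequality'' you were looking for does come from a single-kernel manipulation — the Parseval-type identity combined with a half-line telescope and AM--GM — and not from the two-sided Wronskian structure of Theorem \ref{thm:green_g}.
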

\begin{proof}[Proof of Lemma \ref{lem:kern_ineq}]
  To prove \eqref{eq:sum_id1a} we use $\D_+y_j \ge 0$ and \eqref{eq:discggkkid} for the leftmost equalities, while for the
  rightmost equalities we use the monotonicity properties of \eqref{eq:gk_mono} to write
  \begin{equation*}
    \sumZxi{j}{\abs{\D_{j+}g_{i,j}}} = \dxi \sum_{j=-\infty}^{i-1}\D_{j+}g_{i,j} - \dxi \sum_{j=i}^{\infty} \D_{j+}g_{i,j} = 2 g_{i,i} = 2 \normlinfty{g}.
  \end{equation*}
  We obtain \eqref{eq:sum_id1a} in the same way. To obtain \eqref{eq:sum_id2}, we use the definitions of the operators
  $\operatorname{A}$ in \eqref{eq:defAop} and $\operatorname{B}$ in \eqref{eq:defBop}, and apply telescopic cancellation to the
  differences $\D_{j+}\gamma_{i,j}$ and $\D_{j-}\kappa_{i,j}$ in the identities \eqref{eq:discggkkid}. In the same manner,
  telescopic cancellation applied to \eqref{eq:discggkkid} yields
  \begin{equation*}
    \gamma_{i,j} = \begin{cases}
      \dxi \sum_{m=-\infty}^{j} (\D_+y_m) g_{i,m}, & j \le i-1, \\
      -\dxi \sum_{m=j+1}^{\infty} (\D_+y_m) g_{i,m}, & j \ge i,
    \end{cases}
  \end{equation*}
  Using the fact that $\D_+y_j\geq0$ and $g_{i,j}\geq 0$, the triangle inequality and \eqref{eq:sum_id2} yield \eqref{eq:fml_bnds}
  for $\gamma$. We proceed similarly for $\kappa$. For $g$, observe that, using \eqref{eq:discggkkid}, we can rewrite them as
  \begin{equation}\label{eq:g_sum}
    \begin{aligned}
      g_{i,j} = \sumZ{m}{g_{i,m}\delta_{j,m}} &= \sumZxi{m}{g_{i,m} \left[ (\D_+y_m)g_{j,m} - \D_{m-}\gamma_{j,m} \right]} \\
      &= \sumZxi{m}{(\D_+y_m) \left[ g_{i,m} g_{j,m} + \gamma_{i,m} \gamma_{j,m} \right]}.
    \end{aligned}
  \end{equation}
  Using the decay at infinity we can then write
  \begin{align*}
    (g_{i,i})^2 &= \sum_{m=i}^{+\infty} \left[ (g_{i,m+1})^2 - (g_{i,m})^2 \right] = \dxi \sum_{m=i}^{+\infty} \left[ g_{i,m+1} + g_{i,m} \right] \D_{m+}g_{i,m} \\
                &= \dxi \sum_{m=i}^{+\infty} \left[ g_{i,m+1} + g_{i,m} \right] (\D_+y_m) |\gamma_{i,m}| \le 2 \dxi \sum_{m=i}^{+\infty} g_{i,m} (\D_+y_m) |\gamma_{i,m}| \\
                &\le \dxi \sum_{m=i}^{+\infty} (\D_+y_m) \left[ (g_{i,m})^2 + (\gamma_{i,m})^2 \right] \le \sumZxi{m}{ (\D_+y_m) \left[ (g_{i,m})^2 + (\gamma_{i,m})^2 \right] } \\
                &= g_{i,i},
  \end{align*}
  where we have used \eqref{eq:gk_mono} for the first inequality, and \eqref{eq:g_sum} for the final identity.  The bound
  $g_{i,i} \le 1$ follows, and we use
  $0 \le g_{i,j} \le g_{i,i}$ from \eqref{eq:gk_mono} to conclude. A similar procedure can be applied to prove that
  $k_{i,j}\leq 1$. Furthermore, we have
  \begin{align*}
    \sumZxi{j}{g_{i,j}} = \sumZxi{j}{\left[ \D_+y_j - \D_+\zeta_j \right] g_{i,j}} &= 1 + \sumZxi{j}{\zeta_{j+1} (\D_{j+}g_{i,j})}, \text{ from \eqref{eq:sum_id2},} \\
                                                                                   &= 1 + \sumZxi{j}{\zeta_{j+1} (\D_+y_j)\gamma_{i,j}}, \text{ from \eqref{eq:discggkkid},}  \\
                                                                                   &\le 1 + \normlinfty{\zeta} \sumZxi{j}{(\D_+y_j)|\gamma_{i,j}|},
  \end{align*}
  and the result on the $\lone$ bound of $g$ follows from
  \eqref{eq:sum_id1} and \eqref{eq:fml_bnds}. A similar procedure proves
  the bound on $\normlone{k}$.  For the bound on $\normlone{\gamma}$ we
  find
  \begin{align*}
    \sumZxi{j}{|\gamma_{i,j}|} &= \sumZxi{j}{\left[ \D_+y_j - \D_+\zeta_j \right] |\gamma_{i,j}|} \\
                               &= 2 g_{i,i} - \dxi \sum_{j=-\infty}^{i-1} (\D_+\zeta_j) \gamma_{i,j} + \dxi \sum_{j=i}^{+\infty} (\D_+\zeta_j) \gamma_{i,j} \\
                               &= 2 \normlinfty{g} - 2 \zeta_i \gamma_{i,i-1} + \dxi \sum_{j=-\infty}^{i-1} \zeta_j (\D_{j-}\gamma_{i,j}) - \dxi \sum_{j=i}^{+\infty} \zeta_j (\D_{j-}\gamma_{i,j}) \\
                               &= 2 \normlinfty{g} + (1- 2 \gamma_{i,i-1}) \zeta_i + \sumZxi{j}{\sgn\left(i-j-\frac12 \right) \zeta_i (\D_+y_j) g_{i,j}} \\
                               &\le 2 \normlinfty{g} + \normlinfty{\zeta} \left[ |1-2\gamma_{i,i-1}| + \sumZxi{j}{(\D_+y_j)g_{i,j}} \right],
  \end{align*}
  where in the second equality we use Lemma \ref{lem:gk_sgn}, the third
  equality uses summation by parts \eqref{eq:sum_by_parts}, and the fourth
  is due to the kernel definition property \eqref{eq:discggkkid}. Then the result
  follows from \eqref{eq:sum_id2}, \eqref{eq:fml_bnds}, and $0 \le
  \gamma_{i,i-1} \le 1$.  A similar procedure proves the bound on
  $\normlone{\kappa}$.
\end{proof}

A direct consequence of \eqref{eq:fml_bnds} is that the $\linfty$-norms of the
kernels remain bounded by 1 for all time.  Moreover, combining
\eqref{eq:fml_l1_1} with \eqref{eq:bnd_zeta} we find that the $\lone$-norms
remain bounded for any finite $t$, namely
\begin{align}\label{eq:fml_l1_2}
  \begin{aligned}
    \normlone{g(t)}, \normlone{k(t)} &\le 1 + 2\left[ \normlinfty{\zeta(0)} + \sqrt{2\Htotal}t \right], \\
    \normlone{\gamma(t)}, \normlone{\kappa(t)} &\le 2\left[ 1 + \normlinfty{\zeta(0)} + \sqrt{2\Htotal}t \right].
  \end{aligned}
\end{align}

Furthermore, Lemma \ref{lem:kern_ineq} allows us to find a bound similar to \eqref{eq:bnd_U} for $\normlinfty{R}$ and $\normlinfty{Q}$.
Indeed, for $Q$ we find
\begin{equation}
  \begin{aligned}
    \label{eq:linfestQ}
    \normslinfty{Q} &\leq \normslinfty{g\ast(U (\D_+U))} + \normslinfty{\kappa\ast(h + \rho_\infty\bar r)}\\
    &\leq \normslinfty{g}\normslone{U (\D_+U)}+ \normslinfty{\kappa}\normslone{h} + \rho_\infty\normslinfty{\kappa\ast \abs{\bar r}}.
  \end{aligned}
\end{equation}
Using \eqref{eq:rb_hDy} and the Cauchy--Schwarz inequality, we have
\begin{equation*}
  \rho_{\infty} \normslinfty{\kappa\ast \abs{\bar r}} \leq \frac12\rho_{\infty}^2 \normslinfty{\abs{\kappa}\ast (\D_+y)} + \frac12 \normslinfty{\abs{\kappa}\ast (2h)}
\end{equation*}
which by \eqref{eq:sum_id1} and \eqref{eq:fml_bnds} simplifies to
\begin{equation*}
  \rho_{\infty} \normslinfty{\kappa\ast \abs{\bar r}}\leq \frac12\rho_{\infty}^2 (2\normslinfty{k}) + \normslinfty{\kappa}\normslone{h} \leq \rho_{\infty}^2 + \Htotal.
\end{equation*}
Using \eqref{eq:UDUleH}, we get $\normslone{UD_+U}\leq \Htotal$. Hence, from \eqref{eq:linfestQ}, we get
\begin{equation*}
  \normslinfty{Q} \leq 3 \Htotal + \rho_\infty^2.
\end{equation*}
An analogous estimate for $R$ can be obtained so that we can conclude with the
bounds
\begin{equation}
  \label{eq:bnd_RQ}
  \sup_{0 \le t \le T}\normlinfty{R(t)} \le 3 \Htotal + \frac12\rho_{\infty}^2, \qquad \sup_{0 \le t \le T}\normlinfty{Q(t)} \le 3 \Htotal + \rho_{\infty}^2.
\end{equation}
Now we are set to prove global existence for solutions of \eqref{eq:ODE_disc}.
\begin{thm}[Global existence]
  Given initial datum $X_0$ in the set $\Bcal$ from Definition
  \ref{def:setB}, the system \eqref{eq:ODE_disc} admits a unique global solution
  $X \in \mathbf{C}^1([0,\infty), \Ed)$, such that $X \in \Bcal$ for all
  times.  In particular, for $t > 0$, the norm $\normEd{X(t)}$ is bounded by $ C
  \normEd{X(0)}$ for a constant $C$ depending only on $t$, the total energy  $\Htotal$, the asymptotic density
  $\rho_{\infty}$, and $\normlinfty{\zeta(0)}$.
  \label{thm:solGlobal}
\end{thm}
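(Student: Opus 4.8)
The plan is to promote the short-time solution of Theorem \ref{thm:solLocal} to a global one by a standard continuation argument. Since the existence time furnished by Theorem \ref{thm:solLocal} depends only on $\normEk{X_0}$, it suffices to establish an a priori bound on $\normEk{X(t)}$ that stays finite on every bounded time interval; then the local time-step stays bounded below on compacta and the solution extends to $[0,\infty)$. Throughout, Lemma \ref{lem:solInSet} guarantees that $X(t)$ remains in the invariant set $\Bcal$ of Definition \ref{def:setB}, so that the structural relations there---in particular the energy identity \eqref{eq:Bid} and the positivity $\D_+y_j\ge0$---hold at every time, and the kernels stay equal to the unique decaying Green's functions of $\A{\D_+y}$ and $\B{\D_+y}$.

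First I would note that the kernel part of the $\Ek$-norm is already globally controlled: Lemma \ref{lem:kern_ineq} gives $\normlinfty{g},\normlinfty{k},\normlinfty{\gamma},\normlinfty{\kappa}\le1$, and the $\lone$-bounds \eqref{eq:fml_l1_2} grow at most linearly in $t$, so each $\normlstar{\cdot}$ is bounded on finite intervals by a constant depending only on $t$, $\Htotal$ and $\normlinfty{\zeta(0)}$. Several components of the $\Ed$-norm are likewise already bounded: $\normlinfty{U}\le\sqrt{2\Htotal}$ by \eqref{eq:bnd_U}, $\normlinfty{\zeta}$ grows linearly by \eqref{eq:bnd_zeta}, $\normlinfty{H}=\Htotal$ since $H$ is nondecreasing from $0$ (because $h_j\ge0$), and $\normlinfty{R},\normlinfty{Q}$ are bounded by \eqref{eq:bnd_RQ}. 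The remaining task is therefore to bound the five $\ltwo$-quantities $\normltwo{\D_+\zeta}$, $\normltwo{U}$, $\normltwo{\D_+U}$, $\normltwo{h}=\normltwo{\D_+H}$ and $\normltwo{\bar r}$.

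For these I would differentiate the squared norms along \eqref{eq:ODE_disc} and assemble a coupled system of differential inequalities. Using $\dot\zeta=U$, $\dot U=-Q$, $\dot{\bar r}=-\rho_\infty\D_+U$, the evolution of $h$, and the identities $\D_+Q_j=(\D_+y_j)R_j-h_j-\rho_\infty\bar r_j$ and $\D_-R_j=(\D_+y_j)Q_j-U_j\D_+U_j$ from \eqref{eq:discRQrel}, every time derivative reduces to inner products estimated by Cauchy--Schwarz. The factors $\normltwo{R}$ and $\normltwo{Q}$ that arise are controlled by Young's inequality (Proposition \ref{prp:young}) with exponents $r=p=2$, $q=1$, which---together with the symmetry and antisymmetry \eqref{eq:symopgk}, \eqref{eq:relgk} and the kernel $\lone$-bounds---yields $\normltwo{R},\normltwo{Q}\le C(t)\big(\normltwo{\D_+U}+\normltwo{h}+\normltwo{\bar r}\big)$ with a coefficient depending only on $t$, $\Htotal$, $\rho_\infty$ and $\normlinfty{\zeta(0)}$. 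Collecting the estimates into $N(t)$, the sum of the five $\ltwo$-norms, I expect an inequality $N(t)\le N(0)+\int_0^t C(s)N(s)\,ds$, whence Gronwall gives $N(t)\le N(0)\exp\!\big(\int_0^t C(s)\,ds\big)$. Combined with the already-bounded $\linfty$-quantities this gives $\normEd{X(t)}\le C\normEd{X(0)}$ with $C$ depending only on $t$, $\Htotal$, $\rho_\infty$ and $\normlinfty{\zeta(0)}$ (noting $\Htotal=\normlinfty{H}\le\normEd{X(0)}$), which is the asserted estimate and in particular rules out blow-up in finite time.

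The main obstacle is the appearance, in $\tfrac{d}{dt}\normltwo{\D_+U}^2$ and $\tfrac{d}{dt}\normltwo{h}^2$, of sums weighted by the non-decaying sequence $\D_+y_j=1+\D_+\zeta_j$, such as $\dxi\sum_j(\D_+U_j)(\D_+y_j)R_j$ and $\dxi\sum_j h_j U_j\,\D_-R_j$. A direct Cauchy--Schwarz on these would require $\normltwo{\D_+y}$, which is infinite since $\D_+y_j\to1$. The remedy is to split the weight as $\D_+y=1+\D_+\zeta$: the constant part pairs against the already linearly-controlled $\normltwo{R}$ or $\normltwo{Q}$, while the $\D_+\zeta$ part pairs by Cauchy--Schwarz against $\normltwo{\D_+\zeta}$, one of the tracked quantities. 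It is this splitting, together with the fact that the $\linfty$-inputs $\normlinfty{U}$, $\normlinfty{R}$, $\normlinfty{Q}$ are a priori bounded precisely because $X\in\Bcal$ (through \eqref{eq:Bid} and \eqref{eq:UDUleH}), that keeps the feedback linear in $N$ and lets the Gronwall step close.
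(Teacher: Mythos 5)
Your proposal is correct and follows essentially the same route as the paper: continuation via the local existence theorem, the a priori bounds \eqref{eq:bnd_U}, \eqref{eq:UDUleH}, \eqref{eq:bnd_RQ} and Lemma \ref{lem:kern_ineq} coming from membership in $\Bcal$, the use of \eqref{eq:discRQrel} with the splitting $\D_+y = 1+\D_+\zeta$ to avoid the non-summable weight, Young's inequality for $R$ and $Q$, and a Gr\"onwall closure on the remaining $\ltwo$-quantities. Differentiating the squared norms rather than estimating the right-hand sides in integral form is only a cosmetic difference.
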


\begin{proof}
  The solution has a finite time of existence $T$ only if
  \begin{equation*}
    \normEd{X} = \normVd{\zeta} + \normhone{U} + \normVd{H} + \normltwo{\bar{r}}
  \end{equation*}
  blows up as $t$ approaches $T$. Otherwise the solution can be prolonged by a small time interval by Theorem \ref{thm:solLocal}.
  Let $X$ be the short-time solution given by \ref{thm:solLocal} for initial datum $X_0$.  We will prove that $\sup_{0 \le t \le T}
  \normEd{X} < \infty$.

  From the definition of the $\hone$-norm and \eqref{eq:sobolev_l} we find that the right-hand side of \eqref{eq:ODE_disc_z} is
  bounded in the $\Vd$-norm by $\frac{2+\sqrt{2}}{2} \normhone{U}$, while the right-hand side of \eqref{eq:ODE_disc_r} is bounded
  in $\ltwo$-norm by $\rho_{\infty}\normhone{U}$. Next, we estimate the right hand side of \eqref{eq:ODE_disc_U},
  \begin{align*}
    \normhone{Q} &\le \normltwo{Q} + \normltwo{\D_+Q} \le \normltwo{Q} + \normltwo{R (1+\D_+\zeta) - h - \rho_{\infty}\bar{r}} \\
                 &\le \normltwo{Q} + \normltwo{R} + \normlinfty{R} \normltwo{\D_+\zeta} + \normltwo{h + \rho_{\infty}\bar{r} },
  \end{align*}
  where we have used the definition of the $\hone$-norm, \eqref{eq:discRQrel} and the decomposition $\D_+y_j = 1 + \D_+\zeta_j$.
  Then, recalling the definitions \eqref{eq:Q1} and \eqref{eq:Q2} and applying the Young inequality \eqref{eq:altYoung} to the
  final expression above we see that it is bounded by
  \begin{align*}
    &\normlone{g} \normltwo{U (\D_+U)} + \normlone{\gamma}^{\frac12} \normlone{\kappa}^{\frac12} \normltwo{h + \rho_{\infty}\bar{r} } + \normlone{\gamma}^{\frac12} \normlone{\kappa}^{\frac12} \normltwo{U (\D_+U)} \\
    &\qquad+ \normlinfty{R} \normltwo{\D_+\zeta} + \normlone{k} \normltwo{h + \rho_{\infty}\bar{r} } + \normltwo{h + \rho_{\infty}\bar{r} } \\
    &\le \left[\normlone{g} + \normlone{\gamma}^{\frac12} \normlone{\kappa}^{\frac12} \right] \normlinfty{U} \normltwo{\D_+U} + \normlinfty{R} \normltwo{\D_+\zeta} \\
    &\qquad+ \left[ \normlone{k} + \normlone{\gamma}^{\frac12} \normlone{\kappa}^{\frac12} \right] \left[ \normltwo{h} + \rho_{\infty}\normltwo{\bar{r}} \right].
  \end{align*}
  Then, applying \eqref{eq:UDUleH}, \eqref{eq:bnd_U}, \eqref{eq:fml_l1_2}, \eqref{eq:bnd_RQ} and the definitions of the $\Vd$- and $\hone$-norms we obtain that $\normhone{Q}$ is bounded by
  \begin{equation*}
    \left( 3 + 4 [\normlinfty{\zeta(0)} + \sqrt{2\Htotal}t] \right)[\normhone{U} + \normVd{H} + \rho_{\infty} \normltwo{\bar{r}}] + \left(3\Htotal + \frac12 \rho_{\infty}^2\right)\normVd{\zeta}.
  \end{equation*}
  Finally, the $\Vd$-norm of the right-hand side of \eqref{eq:ODE_disc_H} can be estimated as
  \begin{align*}
    \normVd{U (\uptau R)} &= \normlinfty{U (\uptau R)} + \normltwo{[U^2-R](\D_+U) - U Q [1 + \D_+\zeta]} \\
                          &\le \normlinfty{R} \normlinfty{U} + [\normlinfty{U}^2 + \normlinfty{R}]\normltwo{\D_+U} \\
                          &\quad+ \normlinfty{Q}\normltwo{U} + \normlinfty{Q}\normlinfty{U}\normltwo{\D_+\zeta} \\
                          &\le \left(\frac{2+\sqrt{2}}{2}(3\Htotal + \rho_\infty^2) + 2\Htotal \right) \normhone{U} \\
                          &\quad+ \sqrt{2\Htotal}\left(3\Htotal + \frac12\rho_{\infty}^2 \right)\normVd{\zeta} ,
  \end{align*}
  where we again use the notation $(\uptau R)_j = R_{j-1}$.
  In the first identity above we have employed \eqref{eq:discRQrel}, while in the final line we have used the definitions of the $\Vd$- and $\hone$-norms together with \eqref{eq:sobolev_l}, \eqref{eq:bnd_U} and \eqref{eq:bnd_RQ}.
  
  Gathering all the above estimates of the right-hand sides, writing \eqref{eq:ODE_disc} in integral form, and taking norms we
  obtain the following inequality for $X(t) = (\zeta,U,H,\bar{r})(t)$,
  \begin{equation*}
    \normEd{X(t)} \le \normEd{X(0)} + C(\Htotal,\normlinfty{\zeta(0)},\rho_{\infty}) \int_{0}^{t}(1+s)\normEd{X(s)}\,ds, 
  \end{equation*}
  for $t \in [0,T]$ and some constant $C(\Htotal,\normlinfty{\zeta(0)},\rho_{\infty})$ depending only on $\Htotal$,
  $\normlinfty{\zeta(0)}$ and $\rho_{\infty}$.  Gr\"{o}nwall's inequality then yields
  \begin{equation*}
    \normEd{X(t)} \le \normEd{X(0)} \exp\left\{ C(\Htotal,\normlinfty{\zeta(0)},\rho_{\infty})\left[t + \frac{1}{2}t^2 \right] \right\}, \qquad t \in [0,T],
  \end{equation*}
  which shows that $\normEd{X(T)}$ is bounded, and we may according to Theorem \ref{thm:solLocal} extend our solution indefinitely.
  
  In retrospect, with the estimates \eqref{eq:bnd_U} and \eqref{eq:bnd_RQ} in hand, we can apply a Gr\"{o}nwall estimate to the evolution equations
  for $\alpha$, $\beta$, $h$, and $\bar{r}$ in \eqref{eq:aux}. From this we find that the $\linfty$-norms of $\D_{+}y$,
  $\D_+U$, $h$, and $\bar{r}$ at time $t \in [0,T]$ are bounded by their $\linfty$-norm at time $t = 0$ times a factor
  $\exp\{C(\Htotal,\rho_{\infty})t\}$, where the constant $C(\Htotal,\rho_{\infty})$ depends only on $\Htotal$ and
  $\rho_{\infty}$.
\end{proof}

As mentioned in the introduction, if $\rho>0$ initially for the 2CH system \eqref{eq:2CH}, then the smoothness of the initial data is preserved, see
\cite{Grunert2012}. This is because the characteristics do not collide in this case, and $y_\xi$ remains
positive for all time. In the discrete case, this property takes the form of a lower bound on $\D_+y$. For any given time $T$,
there exists a constant $C>0$ depending on $\max_{t\in[0, T]}\norm{X(t)}_{\Ed}$, $\rho_\infty$, and $T$ such that
\begin{equation}
  \label{eq:lowboundDy}
  (\D_+ y)_j(t) \geq \frac{\rho_{0,j}^2}{C},
\end{equation}
for all $j$ and $t\in[0, T]$. This follows from property \eqref{setB:id} in Definition \ref{def:setB}. Thus, if $\rho_{0,j}>0$, we will have
$y_j(t) < y_{j + 1}(t)$ for all time.

\section{Convergence of the scheme}
\label{sec:convergence}

In this section we interpolate the solutions of the semi-discrete scheme analyzed in Section \ref{sec:existence}, with initial data constructed in Section \ref{sec:init_data}.
We shall then show that these interpolated functions converge to the solution of
the 2CH system as written in \eqref{eq:old2CH} and \eqref{eq:RQsysdeflag}. Let us in this section use $Y_{\dxi}$ to denote the
tuple of grid functions obtained in Theorem \ref{thm:solGlobal} for $t \in [0,T]$,
\begin{equation}\label{eq:grid_tuple}
  Y_{\dxi}(t) = (\zeta, U, H, \bar{r})(t) \in \Ed.
\end{equation}
Since these functions also belong to the set $\Bcal$ in Definition  \ref{def:setB}, we will augment the $\Ed$-norm \eqref{eq:Ed} as follows:
\begin{equation*}
  \norm{Y_{\dxi}}_\Bcal = \normEd{Y_{\dxi}} + \normlinfty{\D_+\zeta} + \normlinfty{\D_+U} + \normlinfty{\D_+H} + \normlinfty{\bar{r}}.
\end{equation*}
In order to ease notation below, we will write $\norm{Y_{\dxi}}$ for $\sup_{0 \le t \le T}\norm{Y_{\dxi}(t)}_\Bcal$. We define the
interpolated functions as follows
\begin{align}\label{eq:interpol}
\begin{aligned}
    V_{\itD}(t,\xi) &= \sumZ{j}{\left[ V_j(t) + (\xi-\xi_j)(\D_+V_j(t)) \right] \chi_j(\xi) }, \qquad \bar{r}_{\itD}(t,\xi) = \sumZ{j}{\bar{r}_j(t) \chi_j(\xi) }, \\
    R_{\itD}(t,\xi) &= \sumZ{j}{\left[ R_{j}(t) + (\xi-\xi_{j+1})(\D_-R_j(t)) \right] \chi_j(\xi)},
\end{aligned}
\end{align}
where $V$ is a placeholder for $\zeta$, $U$, $H$, and $Q$, while $\chi_j(\xi)$ denotes the indicator function for the interval $[\xi_j, \xi_{j+1})$.
We also introduce the functions
\begin{equation}\label{eq:ipyr}
  y_{\itD}(t,\xi) \coloneqq \xi + \zeta_{\itD}(t,\xi), \qquad r_{\itD}(t,\xi) \coloneqq \bar{r}_{\itD}(t,\xi) + \rho_{\infty} \pdiff{\xi}{y_{\itD}(t,\xi)}.
\end{equation}
Observe that the interpolated functions above are piecewise linear and continuous, except for $r_\itD, \bar{r}_\itD$ which are
piecewise constant.  In particular we note the identity
\begin{equation*}
  R_j + (\xi-\xi_{j+1}) (\D_-R_j) = R_{j-1} + (\xi-\xi_j) (\D_-R_j), \quad \xi \in [\xi_j, \xi_{j+1}],
\end{equation*}
which shows $R_\itD(t,\xi_j) = R_{j-1}$. Let us also recall the definition of the space $\E$ in \eqref{eq:spaceE}.
A consequence of Theorem \ref{thm:solGlobal} is that tuple of interpolated functions
\begin{equation}\label{eq:Xdisc}
  X_{\itD}(t) \coloneqq \left( \zeta_{\itD}(t,\cdot), U_{\itD}(t,\cdot), H_{\itD}(t,\cdot), \bar{r}_{\itD}(t,\cdot) \right)
\end{equation}
satisfies $X_{\itD}(t) \in \mathbf{C}^1([0,T],\E )$ for any fixed $T > 0$ and $\dxi > 0$. Let us now consider a given initial datum $X_0 =
(\zeta_0, U_0, H_0, \bar r_0)\in\E$ for the equivalent 2CH system \eqref{eq:old2CH}. We assume there exists a sequence of discrete initial
data $Y_{\dxi, 0} \in \Ed$ such that the interpolation of
$Y_{\dxi, 0}$, denoted $X_{\itD, 0}$, converges to $X_0$, i.e.,
\begin{equation}\label{eq:init_conv}
  \lim_{\dxi\to 0}\norm{X_{\itD, 0} - X_0}_{\E} = 0.
\end{equation}
We will explain how to construct such sequence in the next section. For $T>0$ and each $Y_{\dxi,0}$, let $Y_{\dxi}$ be the corresponding solution
given by Theorem \ref{thm:solGlobal}. Furthermore, we denote by $X \in \mathbf{C}([0, T], \E)$ the
solution to \eqref{eq:ODE_disc} with initial data $X_{0}$, while $X_\itD\in \mathbf{C}([0, T], \E)$ is the function interpolated from
$Y_\dxi$ using \eqref{eq:interpol}. Then, we have the following convergence result.
\begin{thm}[Convergence]
  \label{thm:conv}
  The approximation $X_\itD$ converges to the solution $X$ to the 2CH system \eqref{eq:old2CH} in $\mathbf{C}([0, T], \E)$.
\end{thm}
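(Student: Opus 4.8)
The plan is to prove convergence by a direct Gr\"onwall-type stability estimate, comparing the Duhamel (integral) forms of the continuous solution $X$ and the interpolant $X_\itD$, rather than arguing through compactness. Writing the continuous system \eqref{eq:old2CH}, \eqref{eq:RQsysdeflag} as $\dot X = F(X)$, I first record that the interpolant satisfies $\partial_t\zeta_\itD = U_\itD$ and $\partial_t U_\itD = -Q_\itD$ \emph{exactly} (since interpolation commutes with $\partial_t$ and $\dot\zeta_j=U_j$, $\dot U_j=-Q_j$), that $\partial_t\bar r_\itD$ equals $-\rho_\infty$ times the piecewise-constant interpolant of $\D_+U$, and that $\partial_t H_\itD$ equals the piecewise-linear interpolant of $-U_jR_{j-1}$. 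The first task is then to write $\dot X_\itD = F(X_\itD) + E_\itD$ for a consistency error $E_\itD$, set $e(t)\coloneqq\normE{X_\itD(t)-X(t)}$, and bound
\[
  e(t)\le \normE{X_{\itD,0}-X_0} + \int_0^t\big(\normE{F(X_\itD)-F(X)} + \normE{E_\itD}\big)(s)\,ds.
\]

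The outer ingredients are already available. By Theorem \ref{thm:solGlobal} and membership in $\Bcal$, the quantity $\norm{Y_\dxi}$ is bounded on $[0,T]$ uniformly in $\dxi$, the bound depending only on $\Htotal$, $\rho_\infty$, $T$, and $\normlinfty{\zeta(0)}$, all controlled through the initial convergence \eqref{eq:init_conv}. On this fixed bounded set the continuous right-hand side $F$ is locally Lipschitz in the $\E$-norm --- this is precisely the estimate of \cite{holden2007global, Grunert2012}, mirrored by the discrete Lipschitz bounds of Section \ref{sec:existence} --- so $\normE{F(X_\itD)-F(X)}\le L\,e$. It remains to show the consistency error vanishes, $\sup_{[0,T]}\normE{E_\itD}\to 0$ as $\dxi\to 0$; Gr\"onwall's inequality then gives $e(t)\le C\big(\normE{X_{\itD,0}-X_0}+\sup_{[0,T]}\normE{E_\itD}\big)e^{Lt}\to 0$, which is the claim.

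The heart of the matter, and the main obstacle, is the consistency of the elliptic solve: showing that the interpolated discrete fields $Q_\itD, R_\itD$ approximate the continuous $Q,R$ obtained from $X_\itD$ via \eqref{eq:RQsysdeflag} with $y=y_\itD$. I would \emph{not} compare the discrete kernels $g_{i,j},\kappa_{i,j}$ with the explicit continuous kernels \eqref{eq:contkernels} directly, as these are genuinely different discretizations; instead I work at the level of the defining elliptic equation. One shows that $Q_\itD,R_\itD$ solve the continuous first-order system \eqref{eq:RQsysdeflag} up to a residual of order $\dxi$ in the relevant norms --- a standard truncation estimate comparing $\D_\pm$ with $\partial_\xi$ on the piecewise-linear interpolants, legitimate because membership in $\Bcal$ (Definition \ref{def:setB}) supplies uniform $\linfty$-bounds on $\D_+y$, $\D_+U$, $h$, and $\bar r$. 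Inverting the continuous operator converts this residual into a bound on $Q_\itD-Q$ and $R_\itD-R$, and the crucial point is that this survives the degeneration of $y_\xi$: both the continuous kernels, bounded by $\tfrac12$ in \eqref{eq:contkernels}, and the discrete kernels, bounded in $\lstar$ by Lemma \ref{lem:kern_ineq} and in $\linfty$ by \eqref{eq:fml_bnds}, stay controlled even where $\D_+y$ vanishes. The remaining pieces of $E_\itD$ --- the mismatch between the interpolant of a product such as $U_jR_{j-1}$ and the product of interpolants $U_\itD R_\itD$, and the index shift $R_{j-1}$ versus $R_\itD(\cdot,\xi_j)$ --- are handled by the elementary estimate $\normE{(fh)_\itD - f_\itD h_\itD}=O(\dxi)$ together with the uniform bounds \eqref{eq:bnd_RQ}. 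Collecting these yields $\sup_{[0,T]}\normE{E_\itD}=O(\dxi)\to 0$, closing the Gr\"onwall loop and establishing convergence in $\mathbf{C}([0,T],\E)$.
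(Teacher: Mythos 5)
Your proposal follows essentially the same route as the paper's proof: exact satisfaction of the linear evolution equations by the interpolants, an $\Ocal(\dxi)$ consistency residual for the $H$-equation and for the first-order elliptic system \eqref{eq:RQsysdeflag} with coefficient $y_{\itD,\xi}$, inversion of that system via the (generalized) continuous kernels combined with the local Lipschitz property of the resulting solution operators $\E\to\Hone$ from \cite{Grunert2012}, and a concluding Gr\"onwall argument; your decision not to compare discrete and continuous kernels directly but to work at the level of the defining elliptic equation is precisely the paper's device. The argument is correct and no substantive step is missing.
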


\begin{proof}[Proof of Theorem \ref{thm:conv}]
  The strategy of the proof is to show that our interpolated functions $(y_{\itD}, U_{\itD}, H_{\itD}, r_{\itD})$ satisfy
  \eqref{eq:old2CH} and \eqref{eq:RQsysdeflag}, where we allow for a small error of order $\Ocal(\dxi)$.  For \eqref{eq:old2CH_y},
  \eqref{eq:old2CH_U}, and \eqref{eq:old2CH_r}, we observe that, by construction, we have
\begin{equation*}
  \pdiff{t}{y_{\itD}} = U_{\itD},\quad \pdiff{t}{U_{\itD}} = -Q_{\itD},\quad  \pdiff{t}{r_{\itD}} = 0 
\end{equation*}
due to \eqref{eq:discRQrel}, \eqref{eq:ODE_disc_z}, \eqref{eq:ODE_disc_U}, and \eqref{eq:ODE_disc_r}.  Thus, the three linear
equations in \eqref{eq:old2CH} are satisfied exactly by our interpolants.  The next step is to control the evolution of the error
for the variable $H_{\itD}$. We find
\begin{equation*}
  \pdiff{t}{H_{\itD}} = -U_{\itD} R_{\itD} + \sumZ{j}{(\xi-\xi_j)(\xi-\xi_{j+1})(\D_+U_j)(\D_-R_j) \chi_j}.
\end{equation*}
This identity then implies
\begin{equation*}
  \left( \pdiff{t}{H_{\itD}} + U_{\itD} R_{\itD} \right)_\xi = \sumZ{j}{(2\xi-\xi_j-\xi_{j+1}) (\D_+U_j)(\D_-R_j) \chi_j},
\end{equation*}
almost everywhere. Combining the above identities we can estimate the error in
the $\V$-norm as follows,
\begin{align}\label{eq:est_H}
  \begin{aligned}
    &\normV{\pdiff{t}{H_{\itD}} + U_{\itD} R_{\itD} } \\
    &\quad\le \dxi^2 \sumZ{j}{\abs{\D_+U_j}\abs{\D_-R_j}} + \left( \sumZxi{j}{\dxi^2 \abs{\D_+U_j}^2\abs{\D_-R_j}^2} \right)^{\frac12} \\
    &\quad\le \dxi \normltwo{\D_+U}\normltwo{\D_-R} + \dxi \normlinfty{\D_+U}\normltwo{\D_-R} \\
    &\quad\le \dxi \left(\normltwo{\D_+U} + \normlinfty{\D_+U}\right)\normltwo{(\D_+y)Q - U(\D_+U)} \\
    &\quad\le \dxi \left(\normltwo{\D_+U} + \normlinfty{\D_+U}\right) \left(\normlinfty{\D_+y}\normltwo{Q} + \normlinfty{U} \normltwo{\D_+U}\right).
  \end{aligned}
\end{align}

Now, for the relations \eqref{eq:RQsysdeflag}, we measure the error in $\Ltwo$-norm. From \eqref{eq:discRQrel}, we obtain the
relation
\begin{equation*}
  \pdiff{\xi}{y_{\itD}} Q_{\itD} - \pdiff{\xi}{R_{\itD}} - U_{\itD} \pdiff{\xi}{U_{\itD}} = \sumZ{j}{(\xi-\xi_j) \left[ (\D_+y_j) (\D_+Q_j) - (\D_+U_j)^2\right] \chi_j},
\end{equation*}
and find
\begin{multline}
  \label{eq:est_dR}
  \normLtwo{ \pdiff{\xi}{y_{\itD}} Q_{\itD} - \pdiff{\xi}{R_{\itD}} - U_{\itD} \pdiff{\xi}{U_{\itD}} }\\
  \le \dxi \left( \normlinfty{\D_+y}\normltwo{\D_+Q} + \normlinfty{\D_+U}\normltwo{\D_+U} \right).
\end{multline}
Finally, using \eqref{eq:discRQrel} once more, we have
\begin{align*}
  &\pdiff{\xi}{y_{\itD}} R_{\itD} - \pdiff{\xi}{S_{\itD}} - \pdiff{\xi}{H_{\itD}} -  \rho_{\infty} \bar{r}_{\itD} = \sumZ{j}{(\xi-\xi_{j+1})(\D_-R_j)(\D_+y_j) \chi_j }
\end{align*}
which can be estimated as
\begin{equation}\label{eq:est_dQ}
  \normLtwo{ \pdiff{\xi}{y_{\itD}} R_{\itD} - \pdiff{\xi}{S_{\itD}} - \pdiff{\xi}{H_{\itD}} -  \rho_{\infty} \bar{r}_{\itD} } \le \dxi \normlinfty{\D_+y} \normltwo{\D_-R}.
\end{equation}
The estimate \eqref{eq:est_H} is exactly as we want it, \eqref{eq:old2CH_H} is satisfied in the appropriate norm up to some small remainder.
However, the estimates \eqref{eq:est_dR} and \eqref{eq:est_dQ} require some more work, as we shall see next.

Let us estimate the $\E$-norm of the difference between $X_{\itD}(T)$ and the exact solution $X(T) \coloneqq (\zeta,U,H,\bar{r})(T)$.
From the above estimates and \eqref{eq:old2CH} we find
\begin{align}
  \label{eq:normdiff}
  \begin{aligned}
    \normV{(\zeta_{\itD}-\zeta)(T,\cdot)} &\le \normV{(\zeta_{\itD}-\zeta)(0,\cdot)} + \int_{0}^{T} \normV{ (U_{\itD} - U)(t,\cdot) }\,dt \\
    \normHone{(U_{\itD} - U)(T,\cdot)} &\le \normHone{(U_{\itD} - U)(0,\cdot)} + \int_{0}^{T} \normHone{(Q_{\itD} - Q)(t,\cdot)}\,dt \\
    \normV{ (H_{\itD} - H)(T,\cdot)} &\le \normV{ (H_{\itD} - H)(0,\cdot)} + \int_{0}^{T} \normV{ (U_{\itD}R_{\itD} - U R)(t,\cdot) }\,dt \\
    &\quad+ \dxi C_H(\norm{Y_{\dxi}}) T \\
    \normLtwo{(\bar{r}_{\itD} - \bar{r})(T,\cdot)} &\le \normLtwo{(\bar{r}_{\itD} - \bar{r})(0,\cdot)} + \rho_{\infty} \int_{0}^{T} \normLtwo{ \pdiff{\xi}{(U_{\itD} - U)(t,\cdot)} }\,dt,
  \end{aligned}
\end{align}
where we have used that the final expression in \eqref{eq:est_H} can be bounded by $\dxi C_H(\norm{Y_{\dxi}})$ for some constant $C_H$ depending only on $\norm{Y_{\dxi}}$.

From \eqref{eq:normdiff}, it is clear that we need estimates of $\normHone{Q_{\itD}-Q}$, $\normLinfty{R_{\itD}-R}$, and $\normLtwo{(R_{\itD} -R)_\xi}$ in terms of
\begin{equation*}
  \normE{X_{\itD}-X} = \normV{\zeta_{\itD} - \zeta} + \normHone{U_{\itD} - U} + \normV{H_{\itD}-H} + \normLtwo{\bar{r}_{\itD}-\bar{r}},
\end{equation*}
and by definition of the $\Hone$-norm and the Sobolev inequality $\normLinfty{f} \le \tfrac{1}{\sqrt{2}}\normHone{f}$, it will be
sufficient to bound $\normHone{Q_{\itD}-Q}$ and $\normHone{R_{\itD}-R}$.  To this end, we note that by the estimates
\eqref{eq:est_dR} and \eqref{eq:est_dQ}, it follows that
\begin{equation} \label{eq:iplRQid}
  \begin{bmatrix}
    -\partial_\xi & (y_{\itD})_\xi \\
    (y_{\itD})_\xi & -\partial_\xi
  \end{bmatrix} \circ \begin{bmatrix}
    R_{\itD} \\ Q_{\itD}
  \end{bmatrix} = \begin{bmatrix}
    U_{\itD} (U_{\itD})_\xi \\
    (H_{\itD})_\xi + \rho_{\infty} \bar{r}_{\itD}
  \end{bmatrix} + \dxi \begin{bmatrix}
    v_{\itD} \\ w_{\itD}
  \end{bmatrix}
\end{equation}
for some functions $v_{\itD}, w_{\itD} \in \Ltwo$ which are bounded by a constant depending only on the norm $\norm{Y_{\dxi}}$ of \eqref{eq:grid_tuple}.
Recalling \eqref{eq:explicitRQintform} and the operators defined in \eqref{eq:GKid} we know  that $R(t,\xi)$ and $Q(t,\xi)$ can be written as
\begin{align*}
  R(t,\xi) &= \intR{\eta}{ \kappa[t](\eta,\xi) UU_\xi(t,\eta) } + \intR{\eta}{ g[t](\eta,\xi) [ H_\xi +  \rho_{\infty} \bar{r} ](t,\eta) } \\
           &= \Kcal\left(UU_\xi\right) + \Gcal\left( H_\xi +  \rho_{\infty} \bar{r} \right), \\
  Q(t,\xi) &= \intR{\eta}{ g[t](\eta,\xi) UU_\xi(t,\eta) } + \intR{\eta}{ \kappa[t](\eta,\xi) [ H_\xi +  \rho_{\infty} \bar{r} ](t,\eta) } \\
           &= \Gcal\left(UU_\xi\right) + \Kcal\left( H_\xi +  \rho_{\infty} \bar{r} \right)
\end{align*}
with kernels
\begin{equation*}
  g[t](\eta,\xi) \coloneqq \tfrac12 e^{ -| y(t,\xi) - y(t,\eta) |}, \qquad \kappa[t](\eta,\xi) \coloneqq -\sgn(\xi - \eta) g[t](\eta,\xi).
\end{equation*}
Due to the obvious similarities between \eqref{eq:iplRQid} and \eqref{eq:newdefQR} we would like to generalize the operator identity \eqref{eq:GKid} by replacing $y(t,\xi)$ with any function $b(t,\xi)$ such that $b(t,\cdot)-\id \in \V$ and $b_\xi(t,\xi) \ge 0$, in particular this holds for our $y_{\itD}(t,\xi)$ in \eqref{eq:ipyr} by virtue of Lemma \ref{lem:solInSet}.
This is can be done, and the unique $\Hone$-solution of
\begin{equation*}
  \begin{bmatrix}
    -\partial_\xi & b_{\xi}(t,\xi) \\ b_{\xi}(t,\xi) & -\partial_\xi
  \end{bmatrix} \begin{bmatrix}
    \phi(t,\xi) \\ \psi(t,\xi)
  \end{bmatrix} = \begin{bmatrix}
    v(t,\xi) \\ w(t,\xi)
  \end{bmatrix}
\end{equation*}
for $v(t,\cdot), w(t,\cdot) \in \Ltwo$ is then
\begin{align*}
  \phi(t,\xi) &= \intR{\eta}{ \frac{1}{2}e^{-|b(t,\xi)-b(t,\eta)|} \left[ w(t,\eta) -\sgn(\xi-\eta) v(t,\eta) \right] }, \\
  \psi(t,\xi) &= \intR{\eta}{ \frac{1}{2}e^{-|b(t,\xi)-b(t,\eta)|}  \left[ v(t,\eta) - \sgn(\xi-\eta) w(t,\eta) \right] }.
\end{align*}
Consequently, we can generalize $\Gcal$ and $\Kcal$ from \eqref{eq:GKid} to be operators from $\V \times \Ltwo$ to $\Hone$ as follows,
\begin{align}
  \Gcal[t,\xi](b-\id,f) &\coloneqq \intR{\eta}{ \frac{1}{2}e^{-|b(t,\xi)-b(t,\eta)|} f(\eta) }, \label{eq:G2} \\
  \Kcal[t,\xi](b-\id,f) &\coloneqq -\intR{\eta}{ \sgn(\xi-\eta) \frac{1}{2}e^{-|b(t,\xi)-b(t,\eta)|} f(\eta) }. \label{eq:K2}
\end{align}
Using these operators, we may write the general solutions $\phi(t,\xi)$, $\psi(t,\xi)$ as
\begin{align*}
  \phi(t,\xi) &= \Kcal[t,\xi](b-\id,v) + \Gcal[t,\xi](b-\id,w), \\
  \psi(t,\xi) &= \Gcal[t,\xi](b-\id,v) + \Kcal[t,\xi](b-\id,w).
\end{align*}
An argument analogous to \cite[Lem.\ 3.1]{Grunert2012} then proves that the operators
\begin{equation*}
  \Rcal_1[t,\cdot] \colon (\zeta, U, H, \bar{r}) \mapsto \Kcal[t,\cdot](\zeta,U U_\xi) + \Gcal[t,\cdot](\zeta,H_\xi + \rho_{\infty} \bar{r})
\end{equation*}
and
\begin{equation*}
  \Rcal_2[t,\cdot] \colon (\zeta,v,w) \mapsto \Kcal[t,\cdot](\zeta, v) + \Gcal[t,\cdot](\zeta,w)
\end{equation*}
are locally Lipschitz as operators from $\E \to \Hone$ and $\V \times (\Ltwo)^2 \to \Hone$ respectively,
and the same is true for
\begin{equation*}
  \Qcal_1[t,\cdot] \colon (\zeta, U, H, \bar{r}) \mapsto \Gcal[t,\cdot](\zeta,U U_\xi) + \Kcal[t,\cdot](\zeta,H_\xi + \rho_{\infty} \bar{r})
\end{equation*}
and
\begin{equation*}
  \Qcal_2[t,\cdot] \colon (\zeta, v, w) \mapsto \Gcal[t,\cdot](\zeta,v) + \Kcal[t,\cdot](\zeta,w).
\end{equation*}

Finally turning back to the functions we are interested in, we note that, since
our interpolants $R_{\itD}$ and $Q_{\itD}$ are solutions of \eqref{eq:iplRQid},
they can be written as
\begin{align*}
  R_{\itD}(t,\xi) &= \Rcal_1[t,\xi]\left(\zeta_{\itD}, U_{\itD}, H_{\itD},\bar{r}_{\itD} \right) + \dxi \, \Rcal_2[t,\xi](\zeta_{\itD},v_{\itD},w_{\itD}), \\
  Q_{\itD}(t,\xi) &= \Qcal_1[t,\xi]\left(\zeta_{\itD}, U_{\itD}, H_{\itD},\bar{r}_{\itD} \right) + \dxi \, \Qcal_2[t,\xi](\zeta_{\itD},v_{\itD},w_{\itD}).
\end{align*}
These should then be compared to $R$ and $Q$ for the exact solution, which now can be written as
\begin{align*}
  R(t,\xi) &= \Rcal_1[t,\xi](\zeta, U, H, \bar{r}), \\
  Q(t,\xi) &= \Qcal_1[t,\xi](\zeta, U, H, \bar{r}).
\end{align*}
Then, we write
\begin{align*}
  Q_{\itD}(t,\xi) - Q(t,\xi) &= \Qcal_1\left(\zeta_{\itD}, U_{\itD}, H_{\itD}, \bar{r}_{\itD}\right) - \Qcal_1\left(\zeta, U, H, \bar{r} \right) \\
                             &\quad+ \dxi \, \Qcal_2[t,\xi](\zeta_{\itD},v_{\itD},w_{\itD})
\end{align*}
and it follows from the Lipschitz property that
\begin{align*}
  \normHone{ Q_{\itD}(t,\cdot) - Q(t,\cdot) } &\le C_{Q,1}(\normE{X_{\itD}(t)}, \normE{X(t)}) \normE{X_{\itD}(t)-X(t)} \\
                                              &\quad+ \dxi C_{Q,2}(\norm{Y_{\dxi}})
\end{align*}
for constants $C_{Q,1}, C_{Q,2}$, and an analogous estimate holds for $\normHone{R_{\itD}(t,\cdot)-R(t,\cdot)}$.

From the above estimates, the obvious inequality $\normLtwo{f_\xi} \le \normHone{f}$, and $\normV{f} \le
\tfrac{2+\sqrt{2}}{2}\normHone{f}$ coming from $\normLinfty{f} \le \frac{1}{\sqrt{2}}\normHone{f}$, we may add the equations in
\eqref{eq:normdiff} to obtain
\begin{align*}
  \normE{X_{\itD}(T) - X(T)} &\le \normE{X_{\itD}(0) - X(0)} + \dxi C_1(\norm{Y_{\dxi}}) T \\
                             &\quad+ C_2(\norm{Y_{\dxi}}, \norm{X}) \int_{0}^{T} \normE{X_{\itD}(t)-X(t)}\,dt,
\end{align*}
where we have used $\norm{X} \coloneqq \sup_{0 \le t \le T}\normE{X(t)}$ and $\sup_{0 \le t \le T}\normE{X_{\itD}} \le C(\norm{Y_{\dxi}})$ are bounded by constants depending on $T$ and the $\E$-norm of their initial data.
In particular, by Theorem \ref{thm:solGlobal} we know $\norm{Y_{\dxi}}$ is bounded by a constant depending only on $T$, $\Htotal$, $\normlinfty{\zeta(0)}$, and $\rho_{\infty}$.
Gr\"{o}nwall's inequality then yields the estimate
\begin{equation*}
  \normE{X_{\itD}(T) - X(T)} \le C_3\left(\norm{Y_{\dxi}}, \norm{X}\right) \left[ \normE{X_{\itD}(0) - X(0)} + \dxi C_1(\norm{Y_{\dxi}})T \right].
\end{equation*}
Combining this estimate with \eqref{eq:init_conv}, we obtain
the desired result.
\end{proof}

Since convergence in Lagrangian coordinates implies convergence in the corresponding Eulerian coordinates, see \cite{Grasmair2018}
for details, this shows that interpolated solutions of the discrete two-component Camassa--Holm system can be used to obtain
conservative solutions of the 2CH system \eqref{eq:2CH}.  In particular, as conservative solutions of \eqref{eq:CH} are unique
according to \cite{Bressan2015}, our discretization of the CH equation corresponds to the unique conservative solution of the CH
equation.

\section{Construction of the  initial data}
\label{sec:init_data}

In this section we consider initial data for the continuous system given by $u_0\in\Hone$, $\bar{\rho}_0 = \rho_0 -
\rho_\infty \in\Ltwo$, and a measure $\mu_0$ which corresponds to the energy distribution, see \cite{Grunert2012}.  To ease
notation we omit the subscript 0 and the dependence on $t$ for the rest of this section, as we are always considering $t =
0$.  The absolutely continuous part of the measure $\mu$ satisfies
\begin{equation*}
  \mu_{\text{\rm ac}} = \tfrac12(u^2 + u_x^2 + \bar{\rho}^2 )\, dx,
\end{equation*}
and may in general contain singular parts.  Here we will restrict ourselves to the case where the singular part is purely
atomic, and construct corresponding initial data for the discrete scheme. The ability to handle singular initial data was one
of the motivations for the effort put into Section \ref{sec:aux} to allow for $\D_+y_i = 0$.  From \cite[Thm.\
4.9]{Grunert2012}, we know the functions $(y,U,H,r)$ defined as
\begin{subequations}
  \label{eq:continitdata}
\begin{equation}\label{eq:y_sup}
  y(\xi) \coloneqq \sup\{x \enskip:\enskip \mu((-\infty,x)) + x < \xi \},
\end{equation}
\begin{equation}
  U(\xi) = u \circ y(\xi),\quad H(\xi) = \xi - y(\xi)\quad \text{and}\quad \bar{r}(\xi) = (\bar{\rho} \circ y(\xi)) y_\xi(\xi).
\end{equation}
\end{subequations}
give us the initial data for the equivalent system \eqref{eq:old2CH} which provides us the global conservative solutions of
\eqref{eq:2CH} with initial data $(u,\mu)$.

We define the discrete initial data $y_j = y(\xi_j)$ and $U_j = U(y_j)$.  For the $\Ltwo$-function $\bar{r}$ we define
\begin{equation*}
  \bar{r}_j = \frac{1}{\dxi}\int_{\xi_j}^{\xi_{j+1}} \bar{r}(\eta)\,d\eta.
\end{equation*}
The discrete identity \eqref{eq:Bid} is essential to obtain global existence of solution to the semi-discrete system. The
identity reflects the strong connection between the energy variable $H_j$ and the other variables. To fulfill \eqref{eq:Bid},
we set $h_j$ as follows: If $\D_+y_j > 0$, we define $h_j \ge 0$ such that it satisfies \eqref{eq:Bid}, that is
\begin{equation}
  \label{eq:discinith}
  2 h_j = U_j^2 \D_+y_j + \frac{(\D_+U_j)^2}{\D_+y_j} + \frac{\bar{r}_j^2}{\D_+y_j} 
\end{equation}
and if $\D_+y_j = 0$ we set $h_j = \tfrac12$. Then we define $H_j = \dxi\sum_{m=-\infty}^{j-1}h_m$ to ensure $\D_+H_j = h_j$.
Note that in the norms below we will use $U$ to denote both the continuous-case function $U(\xi)$ and the discrete function
$\{U_j\}_{j \in \Z}$. However, the norm used will indicate which of them we are considering: $\lsp{p}$ and $\Vd$ are used for
discrete functions, and $\mathbf{L}^p$ and $\V$ are used for continuous-case functions.

\begin{thm}
  \label{thm:data}
  We consider the initial data of the two-component Camassa--Holm system \eqref{eq:2CH} given by $u_0 \in \Hone$, $\rho_0$
  such that $\rho_0 - \rho_{\infty} \eqqcolon \bar{\rho}_0 \in \Ltwo$ for some $\rho_{\infty} \ge 0$, and a positive finite
  Radon measure $\mu_0$ whose absolutely continuous part satisfies $\mu_{0,\text{ac} } = (u_0^2 + u_{0,x}^2 +
  \bar{\rho}_0^2)\,dx$, while its singular part may be an atomic measure (the singular continuous part is zero).  By
  definition, the global conservative solution of 2CH is obtained by solving \eqref{eq:old2CH} for the initial datum
  $X_0\in\E$ constructed from $(u_0,\rho_0,\mu_0)$, where $X_0$ is given in \eqref{eq:continitdata}. For this $X_0$, we can
  construct sequences of initial data for the semi-discrete scheme, $X_{0,n} = (\zeta_{0,n},U_{0,n},H_{0,n},\bar{r}_{0,n})
  \in \Ed$ such that each element of the sequence belongs to the set $\Bcal$ defined in Definition \ref{def:setB} and the
  interpolation sequence defined in \eqref{eq:interpol} converges to $X_0$ in $\E$.
\end{thm}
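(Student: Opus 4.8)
The plan is to fix $\dxi$, verify that the tuple $X_{0,\dxi}=(\zeta,U,H,\bar r)$ assembled from \eqref{eq:continitdata}--\eqref{eq:discinith} lies in $\Bcal\cap\Ek$ (Definition \ref{def:setB}), and then to show that the interpolants of \eqref{eq:interpol} converge in $\E$ as $\dxi\to0$. Throughout I write $\zeta_j=y(\xi_j)-\xi_j$ and $U_j=u(y_j)$, so that $\zeta_j,U_j$ are samples of the continuous functions, while $\bar r_j$ is the prescribed cell average, $h_j$ the prescribed energy density, and $H_j=\dxi\sum_{m<j}h_m$. Since $0\le y_\xi\le1$ a.e. we get $0\le\D_+y_j\le1$, which yields property \eqref{setB:pos} (using $h_j\ge0$ and the positive tie-break value on degenerate cells) and, via Jensen applied to the cell average of $H_\xi=1-y_\xi$, the bound $\normltwo{\D_+\zeta}=\normltwo{\D_+y-1}\le\normLtwo{H_\xi}$. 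This last bound lets me invoke Theorem \ref{thm:green_g} and Corollary \ref{cor:green_k} with $a=\D_+y$ to build the exponentially decaying kernels $g,k,\gamma,\kappa\in\lstar$, and Lemmas \ref{lem:fundsolprops} and \ref{lem:gk_sgn} to secure property \eqref{setB:kern}. Property \eqref{setB:id}, i.e.\ \eqref{eq:Bid}, holds verbatim from \eqref{eq:discinith} whenever $\D_+y_j>0$; on cells where $\D_+y_j=0$ the continuity of $u$ and $\bar r=(\bar\rho\circ y)y_\xi$ force $\D_+U_j=0$ and $\bar r_j=0$, so \eqref{eq:Bid} reduces to $0=0$. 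Finally \eqref{setB:linf} follows from $0\le\D_+y_j\le1$, the Cauchy--Schwarz estimates $|\D_+U_j|\le\dxi^{-1/2}\normLtwo{u_x}$ and $\normltwo{\D_+U}\le\normLtwo{u_x}$, and the analogous bounds for $\bar r_j$ and $h_j$.

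The convergence of the three regular components is standard interpolation theory. The interpolants $\zeta_\itD$ and $U_\itD$ are the piecewise-linear nodal interpolants of the continuous $\zeta\in\V$ and $U\in\Hone$; their cell-wise derivatives are precisely the cell averages of $\zeta_\xi$ and $U_\xi$, and since cell-averaging converges strongly to the identity in $\Ltwo$ while nodal values converge by uniform continuity on compacts together with the decay at infinity, one obtains $\normV{\zeta_\itD-\zeta}\to0$ and $\normHone{U_\itD-U}\to0$. Likewise $\bar r_\itD$ is the piecewise-constant cell average of $\bar r\in\Ltwo$, so $\normLtwo{\bar r_\itD-\bar r}\to0$.

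The crux is $\normV{H_\itD-H}\to0$, because $H_\itD$ interpolates $H_j=\dxi\sum_{m<j}h_m$, which is \emph{not} a sample of the continuous $H$. Using $H=\xi-y$ and the decay of $\zeta$ at $-\infty$ I rewrite $H(\xi_j)=\dxi\sum_{m<j}(1-\D_+y_m)$, so that $H_j-H(\xi_j)=\dxi\sum_{m<j}e_m$ with $e_m:=h_m-(1-\D_+y_m)$; the $\V$-convergence of $H_\itD$ then reduces to $\normlone{e}\to0$ (controlling the $\Linfty$ part) together with $\normltwo{e}\to0$ (the $\Ltwo$ part of the derivative). On cells disjoint from the atomic support of $\mu$, the quantity $1-\D_+y_m$ is the cell average of the absolutely continuous energy density $\tfrac12(u^2+u_x^2+\bar\rho^2)(y)y_\xi$, whereas $h_m$ is the corresponding quadrature built from $U_m,\D_+U_m,\bar r_m,\D_+y_m$; the consistency of this quadrature, combined with the convergences established above, drives the contribution of these cells to zero. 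The remaining cells are those meeting the purely atomic singular part of $\mu$: since the atoms have summable mass, given $\epsi>0$ only finitely many exceed mass $\epsi$, and one must check that on the degenerate cells covering each such atom the prescribed value of $h_m$ accumulates to the correct atomic contribution to the cumulative energy, i.e.\ reproduces $\dxi\sum(1-\D_+y_m)$ over that atom in the limit, while the tail of small atoms is absorbed into $\epsi$.

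I expect this last step to be the main obstacle: on the absolutely continuous part the energy-based $H_j$ and the geometric $H=\xi-y$ agree by a routine consistency-plus-convergence argument, but at atoms the identity \eqref{eq:Bid} degenerates and conveys no information about $h_j$, so convergence hinges entirely on the degenerate-cell prescription being taken consistently with the limiting relation $\D_+H_j\to 1-\D_+y_j$ and on truncating the atomic part to finitely many atoms. Once $\normV{H_\itD-H}\to0$ is secured, adding the four component estimates gives \eqref{eq:init_conv}, and all elements $X_{0,\dxi}$ lie in $\Bcal$ by the first step; everything outside the atomic analysis is routine.
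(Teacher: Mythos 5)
Your verification of membership in $\Bcal$ is essentially sound and follows the same route as the paper; your observation that on a degenerate cell the relations $U_j=u(y_j)$ and $\bar r=(\bar\rho\circ y)y_\xi$ force $\D_+U_j=0$ and $\bar r_j=0$, so that \eqref{eq:Bid} holds there, is a detail worth making explicit. (Your bound $\abs{\D_+U_j}\le\dxi^{-1/2}\normLtwo{u_x}$ does give $\D_+U\in\linfty$ for each fixed $\dxi$, but the paper instead derives the $\dxi$-uniform bounds $\abs{\D_+U_j}\le\sqrt{\D_+y_j}\le1$ and $\abs{\bar r_j}\le\sqrt{\D_+y_j}$ from the Lipschitz property of $\xi\mapsto\int^{y(\xi)}u_x^2\,dx$; these are needed both downstream in Theorem \ref{thm:conv} and, more importantly, inside the proof of convergence for $H$.) The convergences of $\zeta_\itD$, $U_\itD$, $\bar r_\itD$ are routine and agree with the paper.

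The genuine gap is in the step you yourself identify as the crux. Reducing $\normV{H_\itD-H}\to0$ to $\normlone{e}+\normltwo{e}\to0$ with $e_m=h_m-(\text{cell average of }H_\xi)$ is fine, but you then dispose of the non-degenerate cells with ``consistency of the quadrature'' and of the degenerate cells with ``one must check,'' and neither is a proof. On cells where $\D_+y_m>0$ but small, $h_m$ contains the quotients $(\D_+U_m)^2/\D_+y_m$ and $\bar r_m^2/\D_+y_m$, and the $\Ltwo$-convergences of $U_{\itD,\xi}$ and $\bar r_\itD$ give no control of these quotients in $\Lone$. The paper's mechanism is entirely different and is the real content of the theorem: (i) the pointwise bounds above, yielding the uniform bound $2h_j\le\normLinfty{u}^2+2$; (ii) restriction to nested dyadic meshes $\dxi=2^{-n}$ so that the degenerate sets $B_n$ increase, together with the measure identity $m(B\setminus\cup_nB_n)=0$ --- this is exactly where the hypothesis that $\mu_s$ is purely atomic enters, and it is absent from your argument; (iii) pointwise a.e.\ convergence of $h_{\itD_n}$ at Lebesgue points of $y_\xi$ off $B$; (iv) a domination/tightness argument ($h_{\itD_n}\le f_n$ with $f_n$ convergent in $\Lone$) to upgrade a.e.\ convergence to $\Lone$-convergence. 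Moreover, the deferred check does not close as you have set it up: on a cell with $\D_+y_m=0$ the prescription gives $h_m=\tfrac12$ while your target $1-\D_+y_m$ equals $1$, so $e_m=-\tfrac12$ on every such cell; since these cells cover each atom interval up to $O(\dxi)$, their contribution to $\normlone{e}$ tends to $\tfrac12 m(B)$, not $0$. The identity \eqref{eq:Bid} together with $h_m=\tfrac12$ on degenerate cells is consistent with the normalization $y_\xi+2H_\xi=1$, i.e.\ $H=(\xi-y)/2$, rather than with $H=\xi-y$ as you use following \eqref{eq:continitdata} literally; resolving this normalization, and then carrying out (i)--(iv), is the missing proof.
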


\begin{proof}
  Let us start by verifying that these initial data satisfy properties \eqref{setB:kern}--\eqref{setB:pos} in Definition
  \ref{def:setB}.  Clearly, from \eqref{eq:y_sup} it follows that $\D_+y_j \ge 0$, and so the construction in Section
  \ref{sec:aux} gives fundamental solutions satisfying property \eqref{setB:kern}.  Properties \eqref{setB:id} and
  \eqref{setB:pos} have already been satisfied through our definition of $h_j$. To verify \eqref{setB:linf}, we need to show
  that the discrete initial data are uniformly bounded.  Following \cite{holden2007global} and \cite{Grunert2012} we have
  $\abs{y(\xi)-\xi} \le \mu(\R)$, and since the total energy $\mu(\R)$ is bounded, we have $\normLinfty{y-\id} \le
  \mu(\R)$. Since $y_j = y(\xi_j)$, this carries directly over to our setting, $\abs{y_j - \xi_j} \le \mu(\R)$, meaning
  $\normlinfty{\zeta} \le \mu(\R)$.  Moreover, in the aforementioned works, the authors prove that $\xi \mapsto y(\xi)$ is
  Lipschitz with Lipschitz constant 1, which yields
  \begin{equation*}
    \abs{y(\xi_{j+1})-y(\xi_j)} \le \abs{\xi_{j+1}-\xi_j} = \dxi \implies \abs{\D_+y_j} \le 1.
  \end{equation*}
  Hence, $\D_+y \in \linfty$. They also prove $\xi \mapsto \int_{-\infty}^{y(\xi)}u_x^2(x)dx$ to be Lipschitz with Lipschitz
  constant 1.  Then, we have the estimate
  \begin{equation}\label{eq:Udiff_est}
    \abs{U(\xi_{j+1}) - U(\xi_j)} = \abs{\int_{y(\xi_j)}^{y(\xi_{j+1})}u(x)\,dx} \\
    \le \sqrt{y(\xi_{j+1})-y(\xi_j)} \sqrt{\int_{y(\xi_j)}^{y(\xi_{j+1})}u_{x}^2(x)dx},
  \end{equation}
  and from the aforementioned Lipschitz properties we obtain $\abs{U_{j+1}-U_j} \le \dxi$, implying $\abs{\D_+U_j} \le 1$ and
  $\D_+U \in \linfty$.  In addition, since $u \in \Linfty$ it is clear from $U_j = u(y_j)$ that $\normlinfty{U} \le
  \normLinfty{u}$.  From our definition of $\bar{r}_j$ we have the estimate $\abs{\bar{r}_j} \le \sup_\xi \abs{\bar{r}(\xi)}
  \le 1$, where the final inequality comes from \cite[Eq.\ (4.7)]{Grunert2012}, and thus $\bar{r} \in \linfty$. For $h_j$,
  when $\D_+y_j > 0$, we estimate $h_j$ as follows. From \eqref{eq:Udiff_est}, we have $\abs{U_{j+1}-U_j} \le \sqrt{\dxi}
  \sqrt{y_{j+1}-y_j}$, or equivalently $\abs{\D_+U_j} \le \sqrt{\D_+y_j}$. For $\bar{r}_j$ we have
  \begin{equation*}
    \bar{r}_j = \frac{1}{\dxi}\int_{\xi_j}^{\xi_{j+1}} \bar{\rho}(y(\eta)) y_\xi(\eta)\,d\eta \le \frac{1}{\dxi} \sqrt{\int_{\xi_j}^{\xi_{j+1}} \bar{\rho}^2(y(\eta)) y_\xi(\eta) \,d\eta } \sqrt{y(\xi_{j+1}) - y(\xi_j)}.
  \end{equation*}
  Applying once more the continuous-case inequality $(\bar{\rho}\circ y)y_\xi \le 1$, the above estimate yields
  $\bar{r}_j \le \sqrt{\D_+y_j}$. 
  From the preceding estimates, $\D_+y_j \le 1$, and \eqref{eq:discinith} we find
  \begin{equation*}
    2 h_j = U_j^2 \D_+y_j + \frac{(\D_+U_j)^2}{\D_+y_j} + \frac{\bar{r}_j^2}{\D_+y_j} \le U_j^2 + 1 + 1 \le \normLinfty{u}^2 + 2.
  \end{equation*}
  Hence, $h \in \linfty$. We have
  \begin{equation*}
    \abs{\zeta(\xi_{j+1})-\zeta(\xi_j)}^2 = \abs{ \int_{\xi_j}^{\xi_{j+1}} \zeta_\xi(\xi)\,d\xi }^2 \le \dxi \int_{\xi_j}^{\xi_{j+1}} \abs{\zeta_\xi(\xi)}^2\,d\xi,
  \end{equation*}
  or equivalently
  \begin{equation*}
    \dxi \abs{\D_+\zeta_j}^2 \le \int_{\xi_j}^{\xi_{j+1}} \abs{\zeta_\xi(\xi)}^2\,d\xi.
  \end{equation*}
  Summing over $j$ in the above equation we obtain $\normltwo{\D_+\zeta}^2 \le \normLtwo{\zeta_\xi}^2$, and so $\zeta \in \Vd$.
  A completely analogous procedure shows $\normltwo{\D_+U} \le \normLtwo{U_\xi}$. For the $\Ltwo$-norm of $U$ we estimate
  \begin{align*}
    \sumZxi{j}{\abs{U_j}^2} &= \sumZ{j}{\int_{\xi_j}^{\xi_{j+1}}\abs{U(\xi) - \int_{\xi_j}^{\xi}U_\xi(s)\,ds}^2 } \\
                            &\le 2 \sumZ{j}{\int_{\xi_j}^{\xi_{j+1}}\abs{U(\xi)}^2\,d\xi } + 2 \sumZ{j}{\int_{\xi_j}^{\xi_{j+1}}\left(\int_{\xi_j}^{\xi_{j+1}}\abs{U_\xi(s)}\,ds \right)^2\,d\xi } \\
                            &\le 2 \normLtwo{U}^2 + 2 \sumZ{j}{\dxi^2 \int_{\xi_j}^{\xi_{j+1}}\abs{U_\xi(s)}^2\,ds },
  \end{align*}
  which translates into $\normltwo{U}^2 \le 2 \normLtwo{U}^2 + 2 \dxi^2 \normLtwo{U_\xi}$, and so $U \in \hone$.
  For $\bar{r}_j$ we use Jensen's inequality to estimate
  \begin{equation*}
    \bar{r}_j^2 \le \frac{1}{\dxi} \int_{\xi_j}^{\xi_{j+1}} \bar{r}^2(\eta)\,d\eta,
  \end{equation*}
  and multiplying with $\dxi$ and summing over $j$ we obtain $\normltwo{\bar{r}}^2 \le \normLtwo{\bar{r}}^2$.
  Then it remains to check that $H(0) \in \Vd$, and from \eqref{eq:Bid} we estimate
  \begin{align}\label{eq:h_bnd}
    \begin{aligned}
      2 h_j &= U_j^2 \D_+y_j + (\D_+U_j)^2 + \bar{r}_j^2 - 2 h_j \D_+\zeta_j \\
      &\le U_j^2 + (\D_+U_j)^2 + \bar{r}_j^2 + h_j + h_j \abs{\D_+\zeta_j}^2.
    \end{aligned}
  \end{align}
  Now, summing over $j$ we find $\normlone{h} \le \normhone{U}^2 + \normltwo{\bar{r}} +
  \normlinfty{h}\normltwo{\D_+\zeta}^2$, where the right-hand side is bounded by our previous estimates.  Since $h_j > 0$, it
  follows from our definition of $H_j$ that $H_j < H_{j+1}$ and $H_j < \normlone{h}$, which yields $\normlinfty{H} =
  \normlone{h}$. Finally, we have $\normltwo{h} \le \normlinfty{h}\normlone{h}$, so $H \in \Vd$. Thus, we have proved that
  $X_j$ belongs to $\Bcal$.

  Let us now prove that the interpolants for these initial data defined by \eqref{eq:interpol} converge to the continuous
  initial data in $\E$-norm. We start with $\zeta$ in $\Linfty$-norm,
  \begin{align*}
    \normLinfty{\zeta-\zeta_\itD} &= \sup_\xi  \sumZ{i}{ \left| \frac{\xi_{i+1}-\xi}{\dxi} \int_{\xi_i}^{\xi}\zeta_\xi(\eta)\,d\eta - \frac{\xi-\xi_i}{\dxi} \int_{\xi}^{\xi_{i+1}}\zeta_\xi(\eta)\,d\eta \right| \chi_i(\xi) }\\
                                  &\le \sup_{i \in \Z} \int_{\xi_i}^{\xi_{i+1}} \abs{\zeta_\xi(\eta)}\,d\eta \le \dxi \normLtwo{\zeta _\xi} \xrightarrow{\dxi \to 0} 0.
  \end{align*}
  Next, we consider the $\Ltwo$-norm of $U$,
  \begin{align*}
    \normLtwo{U - U_\itD}^2 &= \sumZ{i}{ \int_{\xi_i}^{\xi_{i+1}} \left( \frac{\xi_{i+1}-\xi}{\dxi} \int_{\xi_i}^{\xi}U_\xi(\eta)\,d\eta - \frac{\xi-\xi_i}{\dxi} \int_{\xi}^{\xi_{i+1}}U_\xi(\eta)\,d\eta \right)^2 d\xi } \\
                            &\le \sumZ{i}{ \int_{\xi_i}^{\xi_{i+1}} \left(\int_{\xi_i}^{\xi_{i+1}} \abs{U_\xi(\eta)} \, d\eta \right)^2 d\xi } \le \dxi^2 \normLtwo{U_\xi}^2 \xrightarrow{\dxi \to 0} 0.
  \end{align*}
  Then, for the $\Ltwo$-norm of $U_\xi$, we have
  \begin{align*}
    \normLtwo{(U_{\itD})_\xi - U_\xi}^2 &= \sumZ{i}{ \int_{\xi_i}^{\xi_{i+1}} \left| \D_+U_i - U_\xi(\xi) \right|^2 d\xi } \\
                                        &=  \sumZ{i}{\int_{\xi_i}^{\xi_{i+1}}  \left( \frac{1}{\dxi} \int_{\xi_i}^{\xi_{i+1}} ( U_\xi(\eta) - U_\xi(\xi) ) \, d\eta  \right)^2 d\xi } \\
                                        &\overset{\mathclap{\text{Jensen}}}{\le} \quad \sumZ{i}{ \int_{\xi_i}^{\xi_{i+1}} \frac{1}{\dxi} \int_{\xi_i}^{\xi_{i+1}} (U_\xi(\eta) - U_\xi(\xi))^2 \, d\eta \, d\xi } \\
                                        &\le \sumZ{i}{ \int_{\xi_i}^{\xi_{i+1}} \frac{1}{\dxi} \int_{-\dxi}^{\dxi} (U_\xi(\xi + z) - U_\xi(\xi))^2 \, dz \, d\xi } \\
                                        &\overset{\mathclap{\text{Tonelli}}}{=} \quad \frac{1}{\dxi} \int_{-\dxi}^{\dxi} \sumZ{i}{ \int_{\xi_i}^{\xi_{i+1}}  (U_\xi(\xi + z) - U_\xi(\xi))^2 \, d\xi } \, dz \\
                                        &= \frac{1}{\dxi} \int_{-\dxi}^{\dxi} \normLtwo{U_\xi(\cdot + z) - U_\xi(\cdot)}^2 d\xi \\
                                        &\le 2 \max_{\abs{z} \le \dxi} \normLtwo{U_\xi(\cdot + z) - U_\xi(\cdot)}^2 \xrightarrow{\dxi \to 0} 0,
  \end{align*}
  where in the final limit we use \cite[Lem.~4.3]{Brezis}. A completely analogous estimate holds for the convergence of  $\zeta_{\Delta, \xi}$ in $\Ltwo$. Considering $\bar{r}_\itD$ we find
  \begin{align*}
    \normLtwo{\bar{r}_\itD - \bar{r}}^2 &= \sumZ{i}{ \int_{\xi_i}^{\xi_{i+1}} \left( \frac{1}{\dxi} \int_{\xi_i}^{\xi_{i+1}}(\bar{r}(\eta) - \bar{r}(\xi))\,d\eta \right)^2 d\xi } \\
                                        &\overset{\mathclap{\text{Jensen}}}{\le} \quad\sumZ{i}{ \int_{\xi_i}^{\xi_{i+1}} \frac{1}{\dxi} \int_{\xi_i}^{\xi_{i+1}} (\bar{r}(\eta) - \bar{r}(\xi))^2 \,d\eta \, d\xi },
  \end{align*}
  and following the proof for $U_\xi$ we find that this also converges. It remains to prove $H_\itD \to H$ in $\V$.  We shall
  first prove that $h_\itD$ converges to $h$ in $\Lone$, and we do it as follows.  For a given $n\in\{1,2,\ldots\}$, we
  consider the partition of $\R$ defined by the points $\xi_{i,n} = i2^{-n}$, which corresponds to $\dxi = 2^{-n}$. In this
  way, each partition is a subdivision of a coarser partition. We denote $h_\itD$ by $h_{\itD_n}$ and similarly for all the
  other variables. We consider the sets $B = \{ \xi\in\R \text{ s.t. } y_\xi(\xi) = 0 \}$,
  \begin{equation*}
    B_n = \{ \xi\in\R \: : \: \text{there exists } i\in\Z, \text{ s.t. }
    \xi \in (\xi_{i, n}, \xi_{i+1, n}) \text{ and } y(\xi_{i+1, n}) = y(\xi_{i, n})\}.
  \end{equation*}
  Let us also define $B^o$ as the union $B^o = \cup_{n\geq 0}B_n$. Since $y$ is increasing, we have $B_n\subset B$. Moreover, as partitions for larger
  $n$ are obtained by further subdivision, we have $B_n\subset B_{n + 1}$. Let $L$ be the set of Lebesgue points for
  $y_\xi$. We know that the set of Lebesgue points have full measure, that is $m(L^c)=0$. We have, for some $i$,
  \begin{equation*}
    \abs{y_{\itD_n,\xi}(\xi) - y_\xi(\xi)} \leq \frac1{\dxi}\int_{\xi_{i,n}}^{\xi_{i + 1, n}}\abs{y_\xi(\xi) - y_\xi(\eta)}\,d\eta\leq\frac1{\dxi}\int_{\xi - \dxi}^{\xi + \dxi}\abs{y_\xi(\xi) - y_\xi(\eta)}\,d\eta,
  \end{equation*}
  which tends to zero for any Lebesgue point $\xi\in L$. We consider a measure $\mu$ such that the singular part does not
  contain any singular continuous part, that is of the form
  \begin{equation}
    \label{eq:musform}
    \mu_s = \sum_{i = 1}^\infty a_i\delta_{x_i},
  \end{equation}
  for a sequence $a_j\geq 0$ such that $\norm{a_j}_{\lone}<\infty$. When $\mu_s$ takes this form, the set $B$ can be written
  as
  \begin{equation*}
    B = \bigcup_{i = 1}^\infty (\gamma_i, \gamma_i + a_i),
  \end{equation*}
  for some values $\gamma_i\in\Real$, for which we do not need explicit expressions in this proof. In this case, we have
  \begin{equation}
    \label{eq:nullset}
    m(B\cap (B^o)^c) = 0.
  \end{equation}
  Indeed, this is a consequence of $m((\gamma_i, \gamma_i + a_i)\cap (B^o)^c) = 0$, which can be proved as follows.  We have
  \begin{equation*}
    (\gamma_i, \gamma_i + a_i)\cap B_n^c \subset (\gamma_i, \gamma_i + \dxi_n)\cup(\gamma_i + a_i - \dxi_n, \gamma_i + a_i),
  \end{equation*}
  and therefore $m((\gamma_i, \gamma_i + a_i)\cap B_n^c )\leq 2\dxi_n$, which yields
  \begin{equation*}
    m((\gamma_i, \gamma_i + a_i)\cap (B^o)^c) = \lim_{n\to\infty} m((\gamma_i, \gamma_i + a_i)\cap B_n^c ) = 0.
  \end{equation*}
  Then, by countable additivity of the measure, we conclude that \eqref{eq:nullset} holds.  This implies $\lim_{n\to\infty}
  \chi_{B_n} = \chi_B\text{ in }\Lone$. The value of $h_n$ is given by
  \begin{equation*}
    h_{\itD_n}(\xi)  = \frac{1}{2}\chi_{B_n}(\xi) + \left( (U_{\itD_n})^2y_{{\itD_n},\xi} + \frac{(U_{\itD_n,\xi})^2}{y_{\itD_n,\xi}} \right)\!\!(\xi) \, \chi_{B_n^c}(\xi),
  \end{equation*}
  while an analogous expression defines $h$. We have
  \begin{equation*}
    \lim_{n\to\infty}\left( (U_{\itD_n})^2y_{{\itD_n},\xi} + \frac{(U_{\itD_n,\xi})^2}{y_{\itD_n,\xi}} \right)\!\!(\xi) = \left( U^2y_{\xi} + \frac{U_{\xi}^2}{y_{\xi}} \right)\!\!(\xi)
  \end{equation*}
  for every $\xi\in (\cup_{n=1}^\infty B_n)^c \cap L$, that is almost everywhere in $B^c$ because of \eqref{eq:nullset}. Now,
  for any $\epsi>0$, there exists a compact $K_1$ such that
  \begin{equation*}
    \norm{h}_{\Lone(K^c_1)}\leq \epsi.
  \end{equation*}
  On the other hand, From \eqref{eq:h_bnd}, we get that
  \begin{equation*}
    h_{\itD_n} \leq U_j^2 y_{\itD_n,\xi} + (U_{\itD,\xi})^2 + \bar{r}_{\itD_n}^2 + C (\zeta_{\itD_n,\xi})^2,
  \end{equation*}
  for $C$ such that $h_{\itD_n} \leq C$ for all $n$. This means that
  \begin{equation*}
    h_{\itD_n} \leq f_n
  \end{equation*}
  for some positive $f_n$. We have already proved that the sequence $f_n$ is convergent in $\Lone$, and we denote by $f$ its
  limit. For any $\epsi>0$, there exists $K_2$ such that $\int_{K^c_2}f\,dx\leq \frac{\epsi}2$. Then
  \begin{equation*}
    \int_{K^c_2} f_n\,dx \leq \norm{f_n - f}_{\Lone} + \frac{\epsi}2.
  \end{equation*}
  so that for $n$ large enough we have
  \begin{equation*}
    \norm{h_{\itD_n}}_{\Lone(K^c_2)}\leq \epsi.
  \end{equation*}
  We take $K = K_1 \cup K_2$ and we have
  \begin{equation*}
    \norm{h_{\itD_n} - h}_{\Lone} = \norm{h_{\itD_n} - h}_{\Lone(K)}  + \norm{h_{\itD_n} - h}_{\Lone(K^c)}\leq \norm{h_{\itD_n} - h}_{\Lone(K)}  + 2\epsi.
  \end{equation*}
  Since $h_n$ is uniformly bounded in $\Linfty$, by the dominated convergence theorem we have $\lim_{n\to\infty}
  \norm{h_{\itD_n} - h}_{\Lone(K)} = 0$ for any given compact $K$. Hence, $\lim_{n\to\infty}h_{\itD_n} = h$ in $\Lone$. Since
  $H_{\itD,\xi} = h_{\itD_n}$ and $H_\xi = h$, the above convergence implies $H_{\itD_n} \to H$ in $\Linfty$.  Moreover, the
  uniform boundedness of $h_{\itD_n}$ together with the estimate $\normLtwo{h_{\itD_n}-h} \le (\normLinfty{h_{\itD_n}} +
  \normlinfty{h}) \normLone{h_{\itD_n} - h}$ proves that $h_{\itD_n} \to h$ in $\Ltwo$ as well.
\end{proof}

In the special case where the initial data of \eqref{eq:2CH} is smooth, that is, $u \in \Hone$ and $u_{x}, \rho -
\rho_{\infty} \in \Ltwo \cap \Linfty$, we can choose $y_j = \xi_j$. Then, $\zeta_j = 0$ and the initial
conditions for \eqref{eq:ODE_disc} can be chosen as $U_j = U(\xi_j)$ and $\rho_j = \rho(\xi_j)$. Then we define initial
values for the auxiliary variables through
\begin{equation*}
  \bar{r}_j = \rho_j - \rho_\infty, \qquad H_j = \dxi\sum_{m=-\infty}^{j-1}\left[U_m^2 + (\D_+U_m)^2 + (\bar{r}_m)^2 \right].
\end{equation*}
Moreover, since in this case $g_{i,j}$, $k_{i,j}$ are Green's functions for $\A{\mathbf{1}} = \B{\mathbf{1}} = \id-\D_-\D_+$,
they can be computed explicitly. Indeed, for $\D_+y_j = 1$ we have
\begin{equation*}
  g_{i,j} = k_{i,j} = \frac{(\lambda^+)^{-|j-i|}}{\sqrt{4+\dxi^2}},
\end{equation*}
with $\lambda^+$ defined in \eqref{eq:greens_eul}. Thus, initially we have the Eulerian Green's sequences as computed in
\cite{Holden2006}.

\subsection*{Acknowledgments:}
S.~T.~G.~ is thoroughly grateful to Katrin Grunert and Helge Holden for their careful reading of the manuscript, and the many
helpful discussions in the course of this work.

\end{document}